\numberwithin{equation}{section} \setcounter{secnumdepth}{3}
\theoremstyle{plain}
\newtheorem{theorem}[equation]{Theorem}
\newtheorem{proposition}[equation]{Proposition}
\newtheorem{lemma}[equation]{Lemma}
\newtheorem{corollary}[equation]{Corollary}
\newtheorem{condition}[equation]{Condition}
\theoremstyle{remark}
\newtheorem{remark}[equation]{Remark}
\theoremstyle{definition}
\newtheorem{definition}[equation]{Definition}
\newtheorem*{question*}{Question}
\newcommand{\ra}{\rightarrow}
\newcommand{\C}{{\mathcal C}}
\newcommand{\F}{{\mathcal F}}
\renewcommand{\L}{{\mathcal L}}
\newcommand{\N}{\mathbb N}
\renewcommand{\P}{{\mathcal P}}
\newcommand{\R}{\mathbb R}
\newcommand{\T}{{\mathcal T}}
\newcommand{\Z}{\mathbb Z}
\newcommand{\PLF}{(\P,\L,\F,\pi_\P,\pi_\L)}
\newcommand{\PLFt}{(\P,\L,\F,\pi^t_\P,\pi^t_\L)}
\def\De{\Delta}
\def\ra{\rightarrow}
\def\XXint#1#2#3{{\setbox0=\hbox{$#1{#2#3}{\int}$}
     \vcenter{\hbox{$#2#3$}}\kern-.5\wd0}}
\begin{document}
\title{Curve Shortening Flow and Smooth Projective Planes}
\author{Yu-Wen Hsu}
\address{DEPARTMENT of MATHEMATICS, YALE UNIVERSITY, New Haven, CT 06511}
\email{yu-wen.hsu@yale.edu}
\date{\today}

\addcontentsline{toc}{section}{Abstract}

\begin{abstract}


In this paper, we study a family of curves on $\mathbb{S}^2$ that defines a two-dimensional smooth projective plane. We use curve shortening flow to prove that any two-dimensional smooth projective plane can be smoothly deformed through a family of smooth projective planes into one which is isomorphic to the real projective plane. In addition, as a consequence of our main result,  we show that any two smooth embedded curves on $\mathbb{RP}^2$ which intersect transversally at exactly one point converge to two different geodesics under the flow.

\end{abstract}
\thispagestyle{empty}
\newpage

\maketitle

\thispagestyle{empty}

\thispagestyle{empty}

\section{Introduction}
\subsection{Overview}
The subject of smooth projective planes is intriguing in the field of geometric topology. In contrast to the studies of topological projective planes, see \cite{Sal}, the theory of smooth projective planes is not as well developed. The earliest papers considering differentiable structure on topological projective planes were due to Breitsprecher \cite{Bre67}, \cite{Bre71}, \cite{Bre72} and Betten \cite{Betten} in the late 60's and early 70's. The systematical studies of this subject were first given in the theses of Otte \cite{Otte92} and B{\"o}di \cite{bodi96}. Some characterizations of smooth projective planes were contributed by Linus Kramer, Richard B{\"o}di, Stefan Immervoll et al., see \cite{KL94}, \cite{Richard}, \cite{immer10}.

McKay proved [\cite{Mckay}, Theorem 12] that every regular four-dimensional projective plane can be deformed through a family of regular four-dimensional projective planes into one which is isomorphic to the complex projective plane. McKay applied the Radon transform and defined plane curves in a regular projective plane of dimension four (or more) to show that the exterior differential systems for those curves are elliptic. The proof of this deformation result for dimension four was based on the ellipticity argument and the general theory of pseudocomplex structures which was developed in \cite{Mcduff} and \cite{Mckay}.  However, this cannot be applied to the case of two-dimensional smooth projective plane since the Radon transform and plane curves are not well defined. In this paper, we use curve shortening flow (CSF) to prove the analogous result:
 \begin{theorem}\label{MT} There is a smooth homotopy of two-dimensional smooth projective planes between any two-dimensional smooth projective plane and the real projective plane.\end{theorem}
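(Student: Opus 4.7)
The plan is to evolve the two-parameter family of lines of a given two-dimensional smooth projective plane under curve shortening flow (CSF) on $\mathbb{RP}^2$ equipped with the round metric, and to show that the evolving family remains a smooth projective plane for every $t\ge 0$, converging as $t\to\infty$ to the family of closed geodesics of $\mathbb{RP}^2$. Reparametrizing by $s=1-e^{-t}$ then yields the required smooth homotopy on $[0,1]$ between the initial plane and the real projective plane.

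I would first arrange the data. The lines of a two-dimensional smooth projective plane are smooth simple closed curves on $\mathbb{RP}^2$ representing the nontrivial class of $H_1(\mathbb{RP}^2;\mathbb{Z}/2)$; in particular they are noncontractible, and any two of them intersect transversally in exactly one point. Let $\L$ denote the line space, which is itself diffeomorphic to $\mathbb{RP}^2$, and write the lines as $q\mapsto\ell_q$. Apply CSF to each line simultaneously to obtain a family $\ell_q^t$. By Grayson's theorem applied to noncontractible simple closed curves on surfaces, each $\ell_q^t$ remains smoothly embedded and converges smoothly as $t\to\infty$ to a simple closed geodesic of $\mathbb{RP}^2$; parabolic regularity provides joint smoothness of $(q,t)\mapsto\ell_q^t$.

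The central step is to verify that at each time $t\ge 0$ the family $\{\ell_q^t\}_{q\in\L}$ still satisfies the incidence axioms of a smooth projective plane. For any two initial lines $\ell_{q_1},\ell_{q_2}$ meeting transversally at a single point, Angenent's theorem on the nonincrease of transverse intersection numbers shows that $\#(\ell_{q_1}^t\cap\ell_{q_2}^t)$ is nonincreasing in $t$. Since both $\ell_{q_1}^t$ and $\ell_{q_2}^t$ continue to represent the nontrivial class in $H_1(\mathbb{RP}^2;\mathbb{Z}/2)$, their mod $2$ intersection number equals $1$, so they must meet in an odd number of points at every time. Combined with monotonicity, this forces a single transverse intersection for all $t$, which is precisely the consequence highlighted in the abstract. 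The dual axiom, that any two distinct points lie on a unique line, then follows by a degree argument: uniqueness is immediate from the previous axiom, while existence is the invariance of the mod $2$ degree of the evaluation map $\{(p_1,p_2,q):p_1,p_2\in\ell_q^t\}\to\mathbb{RP}^2\times\mathbb{RP}^2$ under continuous deformation, this degree being $1$ at $t=0$.

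The main obstacle, and the genuinely new input, is promoting the pointwise statements above to uniform statements over the full two-parameter family $\L$. Angenent's intersection monotonicity is classically formulated for a single pair of curves, so one must show that the transverse one-point intersection property persists continuously as both line parameters vary, ruling out finite-time tangential collisions and ensuring that the evaluation map of the incidence manifold remains a local diffeomorphism off the diagonal. A parallel difficulty occurs at $t=\infty$: one must upgrade Grayson's single-curve convergence to a convergence of the entire family $\{\ell_q^t\}_q$ that is smooth in $q$ and uniform on $\L$, so that the limit indeed forms a smooth projective plane isomorphic to the real projective plane, and so that the resulting homotopy closes up smoothly at $s=1$.
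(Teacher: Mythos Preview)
Your overall strategy---evolve the family of lines by CSF, invoke intersection-number monotonicity to preserve the incidence axioms for finite $t$, and pass to the limit---is precisely the paper's approach. However, two of your steps contain genuine gaps beyond what you flag at the end.

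First, Grayson's theorem does not assert what you claim: it shows only that the curvature tends to zero in $C^\infty$, not that the curve converges to a \emph{fixed} geodesic (a priori it could drift). Gage's refinement (Theorem~\ref{unique}) gives convergence for area-bisecting curves on $\mathbb{S}^2$, but only pointwise, with no uniformity in the line parameter. The paper spends \S\ref{ltb}--\S\ref{ffc} establishing exponential $C^k$ convergence of a single curve (Theorem~\ref{EX}) via a Poincar\'e-type inequality for $\kappa$ (Lemma~\ref{Poin1}), and then smooth convergence of the full family including derivatives in the $\L$-directions (Theorem~\ref{EX2}); ``parabolic regularity'' does not supply this. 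Relatedly, your reparametrization $s=1-e^{-t}$ does not obviously yield smoothness at $s=1$; the paper builds a specific reparametrization making the homotopy flat there (Proposition~\ref{submersion}).

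Second, and more seriously, you correctly sense a difficulty at $t=\infty$ but do not identify its actual content. Smooth convergence of each line to a geodesic is insufficient: you must show that \emph{distinct} lines converge to \emph{distinct} geodesics and that $\pi_\P^\infty$ is still a submersion, i.e.\ that the limiting family does not collapse. Both reduce to showing that nonzero solutions of the linearized CSF do not tend to zero as $t\to\infty$. The paper's decisive new ingredient is Lemma~\ref{con}: the single-transverse-zero condition (\ref{P2})--(\ref{P3}) forces the Fourier-coefficient bound $|v_n|\le\frac{\sqrt{2}}{2}\,n|v_1|$ (Lemma~\ref{LG}), which, inserted into the infinite ODE system for the modes, yields a uniform positive lower bound on $|v_1(t)|$. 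Your outline contains no mechanism to prevent this collapse, and the degree argument you sketch for the dual axiom cannot substitute for it at $t=\infty$.
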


If $C$ is the space of curves on $\mathbb{RP}^2$ with the standard metric, any two-dimensional smooth projective plane $X$ corresponds to a two-dimensional submanifold $\mathbb{X}$ in $C$ such that any two distinct elements of $\mathbb{X}$ correspond to curves which intersect transversely and exactly once. Denote the submanifold in $C$ corresponding to $\mathbb{RP}^2$ by $\mathbb{G}$, which is comprised of the closed geodesics in $\mathbb{RP}^2$. Therefore, producing the desired one-parameter family of maps from $X$ to $\mathbb{RP}^2$ is equivalent to studying the evolution from $\mathbb{X}$ to $\mathbb{G}$.

CSF shortens any smooth curve by moving it in the direction of its curvature vector field. Gage proved [\cite{Gage}, Theorem 5.1] that any smooth embedded curve on $\mathbb{S}^2$ which bisects the surface area converges to a unique great circle under the flow. Hence it is not hard to believe that one can smoothly flow $\mathbb{X}$ into $\mathbb{G}$ by CSF. Nevertheless, there are several issues arising from flowing curves simultaneously. Firstly, we need to prove that the limit exists in the smooth topology (Gage's result implies only pointwise convergence).  Next, to see a family of curves defines a smooth projective plane at any time, we need to show that CSF preserves the property of transversal intersection between any pair of curves in the family. It is known that this property holds for any $t\geq 0$ as we will discuss in \S \ref{zeros}. This suggests CSF to be a natural tool for this problem. We prove that the transversal intersection is actually preserved in the limit; this turns out to be far more delicate to prove.

Our proof of Theorem~\ref{MT} has the following Corollary and we are not aware of any other proof of this result.
\begin{corollary}
\label{NV}
Any two smooth embedded curves on $\mathbb{RP}^2$ which intersect transversally at a single point converge to two distinct geodesics under curve shortening flow.
\end{corollary}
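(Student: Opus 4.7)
The plan is to extract the corollary directly from the two technical ingredients developed in the course of proving Theorem~\ref{MT}: smooth convergence of a single CSF trajectory on $\mathbb{RP}^2$ to a geodesic, and preservation of transversal intersection in the $t\to\infty$ limit.

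First I would show that each of the two curves converges smoothly to a geodesic. Let $\alpha$ and $\beta$ denote the given embedded curves on $\mathbb{RP}^2$. Transversal intersection at a single point forces their mod-$2$ intersection number to equal $1$, so both are non-contractible. Each therefore lifts to a connected, embedded, antipodally symmetric simple closed curve on $\mathbb{S}^2$; since the antipodal map has no fixed points, it cannot preserve either complementary region (a disk admits no fixed-point-free self-map), so it swaps the two regions and the lift bisects the surface area. Because CSF is equivariant under isometries, antipodal symmetry and hence area-bisection are preserved throughout the flow, so Gage's theorem [\cite{Gage}, Theorem 5.1] applies to each lift. Combined with the upgrade from pointwise to smooth convergence established on the way to Theorem~\ref{MT}, each lift converges smoothly to a great circle, and projecting back, $\alpha(\cdot,t)$ and $\beta(\cdot,t)$ converge smoothly to geodesics $\gamma_\alpha,\gamma_\beta$ in $\mathbb{RP}^2$.

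Next I would show that $\gamma_\alpha\neq\gamma_\beta$. By the Sturmian zero count recalled in \S\ref{zeros}, for every $t\in[0,\infty)$ the evolved curves $\alpha(\cdot,t)$ and $\beta(\cdot,t)$ continue to intersect transversally at exactly one point. The crucial additional input --- the main delicate technical content of Theorem~\ref{MT} --- is that this single transversal intersection persists in the $C^1$ limit. Granting it, $\gamma_\alpha$ and $\gamma_\beta$ intersect transversally at a single point; since two coincident geodesics on $\mathbb{RP}^2$ are equal, $\gamma_\alpha$ and $\gamma_\beta$ must be distinct.

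The main obstacle is entirely in the second step. A priori the unique intersection point of $\alpha(\cdot,t)$ and $\beta(\cdot,t)$ could drift off as $t\to\infty$, or the angle of intersection could decay to zero, causing the two limit geodesics to coincide. Ruling this out requires uniform control in $t$ of both the location of the intersection point and the intersection angle; as emphasized in the introduction, this is substantially subtler than the corresponding finite-time statement and is precisely the new technical contribution that the proof of Theorem~\ref{MT} supplies and that Corollary~\ref{NV} then inherits for free.
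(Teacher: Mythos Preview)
Your route departs from the paper's. The paper derives Corollary~\ref{NV} by first \emph{embedding} the given pair $\alpha,\beta$ into a two-parameter family of curves that defines a smooth projective plane, and then invoking Theorem~\ref{MT} on that family; distinctness of the limit geodesics then follows because the $t=\infty$ tuple is itself a smooth projective plane (Lemma~\ref{sppat1}). You instead try to argue directly with the pair $\alpha,\beta$ alone.

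The gap is in your second step. The delicate $t\to\infty$ control obtained in the proof of Theorem~\ref{MT} is Lemma~\ref{con}, which concerns solutions $v$ of the \emph{linearized} curve shortening equation---an infinitesimal variation along a single CSF trajectory---not the transversal intersection of two distinct CSF trajectories. The statement ``this single transversal intersection persists in the $C^1$ limit'' for an arbitrary pair is not established anywhere in the proof of Theorem~\ref{MT}; what is established there is (i) nonvanishing of LCSF solutions, and (ii), via (i) together with the full line-pencil structure, that the limiting tuple is a smooth projective plane. Neither applies to an isolated pair without further argument, so the corollary does not ``inherit for free'' as you claim. To close the gap you must either carry out the embedding step (the paper's path, itself nontrivial), or make the following observation explicit: if $\gamma_\alpha=\gamma_\beta$, then for large $t$ both lifted curves are graphs $h_\alpha,h_\beta$ over this common great circle, and by the mean value theorem the difference $w=h_\alpha-h_\beta$ satisfies a linear parabolic equation of precisely the form \eqref{V1} with coefficients obeying \eqref{res}; moreover $w$ inherits the antiperiodicity \eqref{P2} and the single-transverse-zero property \eqref{P3}, so Lemma~\ref{con} applied to $w$ gives $\|w\|_{C^0}\not\to 0$, a contradiction. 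Without one of these bridges, your second step does not follow from what the proof of Theorem~\ref{MT} actually supplies.
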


We prove this corollary by embedding the two curves into a family of curves which defines a smooth projective plane.

\subsection{Organization of the paper}
In section \ref{s2}, we review some background material and show some relevant formulae. In section \ref{s3}, we first prove 
a convergence result for evolving a compact smooth family of curves on $\mathbb{S}^2$ by CSF; note that this convergence result applies to any such a family, it does not require the assumption that the family defines a smooth projective plane. Then, we restrict attention to a family of curves which defines a smooth projective plane and show that one gets a smooth homotopy of smooth projective planes, after reparametrization to the time interval $[0,1)$. In section \ref{s4}, we extend the smooth homotopy of smooth projective planes to the closed time interval $[0,1]$ by analyzing the linearized curve shortening equation and present the proof of our main result  Theorem~\ref{MT}.

\subsection{Acknowledgements} The author would like to thank Bruce Kleiner for introducing her to
the problem, and for his guidance and direction during the entire
project. The author would also like to acknowledge  numerous helpful
conversations with Philip Gressman, Subhojoy Gupta, Joseph Lauer, Yair
Minsky and Rishi Raj.
\section{Preliminaries}
\label{s2}

\subsection{Smooth projective planes}
The classical example of a smooth projective plane of dimension two is the real projective plane, $\mathbb{RP}^2$. It can be thought of as the set of lines through the origin in $\mathbb{R}^3$. A \textit{line} in $\mathbb{RP}^2$ is then the set of lines through the origin in $\mathbb{R}^3$ that lie in the same plane. Any two planes through the origin in $\mathbb{R}^3$ intersect at a unique line through the origin in $\mathbb{R}^3$. Lines through the origin in $\mathbb{R}^3$ can be thought of as \textit{points} in $\mathbb{RP}^2$. Any two \textit{lines} intersect at a unique \textit{point} and any two \textit{points} can be joined by a unique \textit{line}. Alternatively, one can think of $\mathbb{RP}^2$ as the unit sphere $\mathbb{S}^2$ with antipodal points identified. In this setting, a \textit{line} in $\mathbb{RP}^2$ is a great circle and a \textit{point} is a pair of antipodal points on $\mathbb{S}^2$.

 \begin{definition}[Projective planes] A {\bf projective plane} is a triple $(\P,\L,\F)$ which consists of the point space $\P$, the line space $\L$ and the flag space $\F\subset \P\times \L$ such that the following axioms are satisfied.
\begin{enumerate}
\item Any two points $p$, $q$ in $\P$ can be joined by a unique line $L=p\;\vee \;q \in\L$ that is $(p,L)$ and $(q,L)$ are in $\F$.
\item Any two lines $l_1$, $l_2$ in $\L$ intersect at a unique point $p= l_1\wedge l_2 \in \P$ that is $(p,l_1)$ and $(p,l_2)$ are in $\F$.
\item There are 4 points, no three of which are on the same line.
\end{enumerate}
\end{definition} 
 \begin{definition}[Smooth projective planes] \label{SP2}
 A projective plane is called a {\bf smooth projective plane} if $\P$ and $\L$ are smooth manifolds and the maps $\vee:\P\times \P\setminus \bigtriangleup(\P)\to \L$ and $\wedge:\L\times \L\setminus\bigtriangleup(\L) \to \P$ are smooth, where $\De$ denotes the diagonal in $\P\times \P$ .
\end{definition}

\begin{theorem} [Freudenthal \cite{Freudenthal}]The dimension of a smooth projective plane is
either 0, 2, 4, 8 or 16.\end{theorem}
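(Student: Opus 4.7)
The plan is to reduce the classification to the topological theory of projective planes and ultimately to invoke Adams' theorem on the Hopf invariant one problem.

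First, using the smoothness of $\vee$ and $\wedge$, I would show that all lines (and dually, all pencils) are diffeomorphic to a single smooth manifold $F$. For any two distinct lines $L_1, L_2 \in \L$ and any point $p \notin L_1 \cup L_2$, the perspectivity $L_1 \to L_2$ defined by $q \mapsto (p \vee q) \wedge L_2$ is smooth with smooth inverse (swap the roles of $L_1$ and $L_2$), so all lines are diffeomorphic; the dual construction handles pencils $\L_p$. Let $k = \dim F$; the degenerate case $k = 0$ (giving $\P$ discrete) can be handled separately, so assume $k \geq 1$.

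Second, I would fix $p_0 \in \P$ and analyze the smooth map $\pi \colon \P \setminus \{p_0\} \to \L_{p_0}$ sending $q$ to $p_0 \vee q$, whose fiber over $\ell \in \L_{p_0}$ is $\ell \setminus \{p_0\}$. Perspectivities centered at auxiliary points serve as transition maps, promoting $\pi$ to a smooth fiber bundle with fiber $F \setminus \{\mathrm{pt}\}$, so $\dim \P = 2k$. Compactness of lines (a standard consequence of the axioms in the connected setting) together with the fiber bundle structure forces $F$ to be a closed $k$-manifold whose complement of a point is an open $k$-cell, yielding $F \cong S^k$.

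Third, I would produce a Hopf-type fibration. Restricting $\pi$ to a small round geodesic sphere around $p_0$ in $\P$ yields a sphere bundle
\[
S^{k-1} \longrightarrow S^{2k-1} \longrightarrow S^k \cong \L_{p_0},
\]
since each line through $p_0$ meets the small sphere in a $(k-1)$-sphere. Such a bundle represents a class of Hopf invariant one in $\pi_{2k-1}(S^k)$, and by Adams' theorem this is possible only for $k \in \{1,2,4,8\}$. Combined with the $k=0$ case, we recover exactly the allowed dimensions $0, 2, 4, 8, 16$.

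The main obstacle is the bundle-and-sphere step: rigorously upgrading $\pi$ from a smooth submersion with smooth fibers to a genuine locally trivial smooth fiber bundle, and then identifying $F$ with $S^k$ rather than some other closed $k$-manifold. Both rely on a careful global use of the smooth join and meet operations, and the simplest route is probably to import the corresponding results from the topological classification of compact projective planes (e.g.\ from Salzmann's treatment) once the smooth structure has been used to verify its hypotheses.
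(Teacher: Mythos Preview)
The paper does not prove this theorem at all: it is stated as background and attributed to Freudenthal via a citation, with no argument given. So there is nothing in the paper to compare your proposal against.

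That said, your outline is essentially the standard modern route to this result. The perspectivity argument showing all lines are mutually diffeomorphic is correct, and the passage from a fibration $S^{k-1}\to S^{2k-1}\to S^k$ to the constraint $k\in\{1,2,4,8\}$ via Adams' Hopf-invariant-one theorem is exactly how the dimension restriction is typically deduced once one knows lines are spheres. You have also correctly flagged the two genuinely nontrivial steps: (i) promoting the projection $q\mapsto p_0\vee q$ to a locally trivial bundle, and (ii) identifying the line $F$ with $S^k$. For (i), in the smooth category the cleanest argument is to show the projection is a proper submersion and invoke Ehresmann's theorem; your perspectivity trivializations work too but need care near the excluded point. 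For (ii), the argument that a closed $k$-manifold whose complement of a point is an open cell must be $S^k$ is not automatic for general $k$ (think of the Poincar\'e conjecture); in the projective-plane setting one usually gets this from the contractibility of the affine part together with compactness, and the full justification is indeed most easily imported from the Salzmann et al.\ treatment of compact projective planes, as you suggest. One small point: your ``small round geodesic sphere'' step tacitly uses that the lines through $p_0$ are smoothly embedded with distinct tangent $k$-planes at $p_0$, which follows from the transversality of point rows but should be stated.
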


\begin{theorem}[\cite{SH} 51.29]\label{diffreal}Two dimensional smooth projective planes are diffeomorphic to the real projective plane.\end{theorem}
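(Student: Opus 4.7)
The plan is to combine two classical ingredients: the topological classification of two-dimensional compact projective planes, and the uniqueness of smooth structures on closed surfaces. Concretely, I would first reduce the smooth statement to a topological statement about $\mathcal{P}$, then appeal to surface theory to upgrade a homeomorphism to a diffeomorphism.

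First, I would observe that by Definition \ref{SP2} the point space $\mathcal{P}$ and line space $\mathcal{L}$ are smooth $2$-manifolds, so in particular they are Hausdorff topological $2$-manifolds, the incidence relation $\mathcal{F} \subset \mathcal{P} \times \mathcal{L}$ is a closed subset, and the join and meet operations are continuous. Hence the underlying structure is a compact topological projective plane in the sense of Salzmann \cite{Sal} (one checks compactness from the continuity of $\vee$ and $\wedge$ together with the incidence axioms, using that any line is the image of the continuous map $q \mapsto p \vee q$ restricted to a suitable domain). This reduces the problem to identifying the topology of $\mathcal{P}$.

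Second, I would invoke the topological classification of two-dimensional compact projective planes (Salzmann): $\mathcal{P}$ is homeomorphic to $\mathbb{RP}^2$ and every line is homeomorphic to $S^1$. The proof of this step uses the flag manifold $\mathcal{F}$ as a double fibration — projection $\pi_{\mathcal{P}} : \mathcal{F} \to \mathcal{P}$ realizes $\mathcal{F}$ as a circle bundle whose fiber over $p$ is the pencil of lines through $p$, and similarly for $\pi_{\mathcal{L}}$. The duality symmetry between $\mathcal{P}$ and $\mathcal{L}$, the incidence axiom that any two lines meet in exactly one point, and standard algebraic topology of $S^1$-bundles over surfaces then pin down the homeomorphism type of $\mathcal{P}$.

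Finally, I would apply the uniqueness of smooth structures on closed surfaces: in dimension $2$ any two smooth structures on a fixed topological surface are diffeomorphic. Since $\mathcal{P}$ is a smooth surface homeomorphic to $\mathbb{RP}^2$, it is diffeomorphic to $\mathbb{RP}^2$, and the same argument applies to $\mathcal{L}$. The main obstacle is the topological step: extracting the homeomorphism type of $\mathcal{P}$ from the axioms is the substantive content of the result, while the first and third steps are essentially formal once the framework is set up.
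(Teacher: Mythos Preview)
The paper does not supply its own proof of this theorem; it is stated with a citation to \cite{SH}~51.29 and used as a black box. Your outline is the standard way to assemble the result from the literature: pass from the smooth plane to its underlying compact topological projective plane, invoke Salzmann's classification to get $\mathcal{P}\cong\mathbb{RP}^2$ topologically, and then use the uniqueness of smooth structures on closed surfaces to upgrade the homeomorphism to a diffeomorphism. There is no gap in the strategy; as you say, the only nontrivial content lies in the topological classification step, which is itself a substantial theorem of Salzmann and is not reproved here either.
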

\begin{theorem}[Mckay \cite{Mckay}] Every smooth projective plane of dimension 4 is diffeomorphic to the complex projective plane.\end{theorem}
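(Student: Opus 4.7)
The plan is to derive this diffeomorphism statement directly from McKay's deformation theorem (Theorem~12 of \cite{Mckay}), which is recalled in the introduction. Let $X=(\P,\L,\F)$ be a smooth projective plane of dimension $4$. By McKay's theorem there is a smooth one-parameter family of smooth projective planes $(\P,\L_t,\F_t)$ for $t\in[0,1]$ with $(\P,\L_0,\F_0)=X$ and $(\P,\L_1,\F_1)$ isomorphic to $\mathbb{CP}^2$ as a smooth projective plane. The deformation varies only the line and flag structure while keeping the underlying smooth $4$-manifold $\P$ fixed. Since an isomorphism of smooth projective planes is in particular a diffeomorphism of point spaces (see Definition~\ref{SP2}), the isomorphism at time $t=1$ immediately furnishes a diffeomorphism $\P\cong\mathbb{CP}^2$.

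An alternative route, independent of McKay's deformation theorem, would proceed in two steps. First, invoke the classification of compact topological projective planes from \cite{Sal}: any topological projective plane whose point space is a $4$-manifold has point space homeomorphic to $\mathbb{CP}^2$, yielding a homeomorphism $\phi:\P\to\mathbb{CP}^2$. Second, upgrade $\phi$ to a diffeomorphism by exploiting the smoothness of $\vee$ and $\wedge$. Concretely, the smooth pencil of lines through a chosen point $p_0\in\P$ provides a smooth foliation of $\P\setminus\{p_0\}$ by smoothly embedded curves, and comparing with the analogous pencil of complex projective lines through a point in $\mathbb{CP}^2$ lets one construct a diffeomorphism compatible with both smooth structures.

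The main obstacle in either approach is that dimension $4$ is precisely the dimension in which homeomorphism can fail to upgrade to diffeomorphism, and it is not even known whether $\mathbb{CP}^2$ admits exotic smooth structures. In the first approach, this difficulty is hidden inside McKay's deformation theorem itself, whose proof requires establishing ellipticity of the exterior differential system governing the lines (via the Radon transform) and then applying the pseudocomplex structure theory developed in \cite{Mcduff} and \cite{Mckay}. In the second approach, one must show that the smoothness of $\vee$ and $\wedge$ rigidifies the smooth structure on $\P$ enough to force it to be the standard one; this is plausible but is likely to require an analytic input of comparable strength to McKay's elliptic theory, so I expect the deformation-based proof to be the cleaner route.
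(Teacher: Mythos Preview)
The paper does not prove this statement at all; it merely cites it as a known result of McKay \cite{Mckay}, placed in the preliminaries alongside other background facts. So there is no ``paper's own proof'' to compare against.

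That said, your first route has a potential circularity problem worth flagging. You propose to deduce the diffeomorphism $\P\cong\mathbb{CP}^2$ from McKay's deformation theorem. But notice how the present paper handles the analogous two-dimensional case: in the proof of Theorem~\ref{MT}, the author first invokes Theorem~\ref{diffreal} (that $\P$ is diffeomorphic to $\mathbb{RP}^2$) in order to endow $\P$ with the standard constant-curvature metric, and only then runs the deformation via curve shortening flow. In other words, the diffeomorphism result is an \emph{input} to the deformation result, not a consequence of it. McKay's four-dimensional deformation argument is structured similarly: one needs a pseudocomplex (almost-complex) structure on $\P$ to set up the elliptic theory, and obtaining such a structure already amounts to knowing a great deal about the smooth type of $\P$. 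So deriving the diffeomorphism from the deformation is likely backwards. Also note that McKay's Theorem~12, as quoted in the introduction, is stated for \emph{regular} projective planes, a hypothesis you silently drop when applying it to an arbitrary smooth projective plane.

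Your second sketch and your closing paragraph are more on target: the genuine content lies in McKay's elliptic analysis of the exterior differential system for lines, and any honest proof will have to go through that or something of comparable strength. Since the paper treats this theorem as a black-box citation, the appropriate response here is simply to cite \cite{Mckay} rather than to attempt a self-contained argument.
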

\begin{remark}Every smooth projective plane of positive dimension is diffeomorphic to its model.
 For dimensions 8 and 16, this was proven by Kramer and Stolz \cite{Kramer}.\end{remark}
 
\begin{definition}[Smooth generalized plane, \cite{Richard}]
Suppose that $(\P,\L,\F)$ is a projective plane and $\P$ and $\L$ are $2n$-dimensional closed smooth manifolds and $\F\subset \P\times \L$ is a $3n$-dimensional closed smooth embedded submanifold so that the canonical projections $\pi_p: \F\to \P:(p,l)\mapsto p$ and $\pi_l: \F\to \L:(p,l)\mapsto l$ are both submersions. Then it is a {\bf smooth generalized plane}. 
\end{definition}

\begin{definition}[Point rows and line pencils]
Let $(\P,\L,\F)$ be a smooth generalized plane. For $\ell\in \L$, we call the set $P_\ell=\pi_p(\pi^{-1}_\L(\ell))$ the {\bf point row} associated with $l$. For $p\in \P$, we call the set $L_p=\pi_\L(\pi^{-1}_p(p))$ the {\bf line pencil} through $p$. 
\end{definition}

\begin{theorem}[\cite{KL94}, p86]
Let $(\P,\L,\F)$ be a smooth projective plane of dimension $2n$. Then the point rows and the line pencils are smoothly embedded $n$-sphere. The spaces $\P$, $\L$, and $\F\subset \P\times\L$ are compact connected smooth manifolds of dimension 2n, 2n, 3n, respectively. Moreover, $\pi_\P:\F\to\P$ is  a locally trivial smooth $n$-sphere bundle.  
\end{theorem}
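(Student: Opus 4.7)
The strategy is to bootstrap from the smoothness of $\vee$ and $\wedge$ to progressively richer structures: first a smooth manifold structure on point rows and line pencils, then the dimensions of $\L$ and $\F$, then the sphere structure via global topological input, and finally the bundle property via Ehresmann's theorem.

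First, I would fix a line $\ell_0 \in \L$ and an auxiliary point $a \in \P \setminus \ell_0$ (existing by axiom~(3)). The restrictions
\[
\ell_0 \xrightarrow{\,q \,\mapsto\, a \vee q\,} L_a, \qquad L_a \xrightarrow{\,\ell \,\mapsto\, \ell \wedge \ell_0\,} \ell_0
\]
are mutually inverse smooth bijections, and iterating such constructions across varying choices of $\ell_0$ and $a$ shows that all point rows and line pencils are mutually diffeomorphic smooth manifolds of some common dimension $n$. Next I would establish $\dim \L = 2n$ via a constant-rank argument applied to $\vee \colon \P \times \P \setminus \De \to \L$: its fiber over $\ell$ is $(\ell \times \ell) \setminus \De$, of dimension $2n$ by the previous step, and smooth local right inverses built from $\wedge$ yield submersivity, so $\dim \L = 4n - 2n = 2n$. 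To see that $\F$ is a smooth $3n$-dimensional submanifold of $\P \times \L$, I would exhibit the explicit local parametrization $\Phi(x,y) = (x, x \vee y)$, where $x$ varies in $\P$ near a chosen $p_0 \in \ell_0$ and $y$ varies on an auxiliary line $a \vee p_0$ near $p_0$; injectivity of $\Phi$ follows from the uniqueness of $\wedge$, and smoothness of the inverse $(x,\ell) \mapsto (x, \ell \wedge (a \vee p_0))$ from smoothness of $\wedge$.

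To upgrade point rows to $n$-\emph{spheres} I would invoke L\"owen's classical theorem from the topology of compact projective planes, which asserts that every point row of a topological projective plane of dimension $2n$ is homeomorphic to $S^n$. Combined with the smooth structure from the first step, each point row and each line pencil is then a smoothly embedded $n$-sphere. Compactness of $\P$ follows by writing it as the image, under the continuous projection $\pi_\P$, of the compact set $\pi_\L^{-1}(L_a)$ (which fibres over the compact $L_a \cong S^n$ with compact sphere fibres); compactness of $\L$ and $\F$ follow dually, and connectedness is inherited from the fibre bundle picture. Finally, $\pi_\P \colon \F \to \P$ is a smooth surjective submersion between compact manifolds with $n$-sphere fibres, so Ehresmann's fibration theorem promotes it to a locally trivial smooth $n$-sphere bundle.

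The main obstacle is the sphere identification: passing from ``smooth $n$-dimensional manifold'' to ``$n$-sphere'' for the point rows. This is a global topological statement about projective planes that is not forced by the local smoothness of $\vee$ and $\wedge$ alone, and I would cite L\"owen's theorem as a black box rather than attempt a direct argument in the smooth category.
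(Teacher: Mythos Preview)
The paper does not prove this theorem; it is quoted from Kramer--L\"owen as background, with no argument supplied. So there is no in-paper proof to compare your proposal against.

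Your outline is broadly correct and essentially tracks the Kramer--L\"owen approach: perspectivities built from $\vee$ and $\wedge$ give smooth diffeomorphisms among all point rows and line pencils; a submersion/rank argument pins down $\dim\L$ and $\dim\F$; L\"owen's topological theorem supplies the sphere identification; and Ehresmann yields local triviality. Two points deserve tightening. First, your chart $\Phi(x,y)=(x,x\vee y)$ with both $x$ and $y$ taken near $p_0$ hits the diagonal, where $x\vee y$ is undefined; take instead an auxiliary line $m$ not through $p_0$ and let $y$ range over $m$ near $m\wedge\ell_0$, so that $x\neq y$ automatically for $x$ near $p_0$. Second, your compactness step is circular as stated: you declare $\pi_\L^{-1}(L_a)$ compact because it ``fibres over the compact $L_a$ with compact sphere fibres,'' but a map with compact base and compact fibres need not have compact total space without local triviality, and you only invoke Ehresmann (which itself needs properness) afterward. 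The fix is either to import compactness of $\P$ and $\L$ from the topological theory alongside L\"owen's sphere theorem, or to establish local triviality of $\pi_\L$ \emph{directly} via the perspectivity trivialization $(p,\ell)\mapsto\bigl(\ell,\,(a\vee p)\wedge\ell_0\bigr)$ before running the compactness argument.
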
 

\begin{definition}[\cite{Richard}]
Two lines $\ell_1$ and $\ell_2$ of a smooth generalized plane are said to {\bf intersect transversally} in some point $p$ if the associated point rows $P_{\ell_1}$ and $P_{\ell_2}$ intersect transversally in $p$ as submanifolds in $\P$, i.e. their tangent spaces in $\P$ span the tangent space $T_p\P$. Dually, we say that two line pencils intersect transversally in $\ell$ if their tangent spaces in $\L$ span the tangent space $T_\ell\L$.
\end{definition}

In \cite{Richard} Corollary 1.4, B{\"o}di and Immervoll  have proved that a smooth projective plane is a smooth generalized plane for which any two lines are transverse and the pencils of any two points are transverse and \textit{vice versa}. The condition on the transversality of any two point rows implies that the intersection map is locally well defined and smooth (\cite{Richard} Theorem 1.2).  One can therefore use this characterization as a definition of smooth projective planes.

\begin{definition}[Smooth projective planes, alternate definition]
 \label{SP1}
Let $(\P,\L,\F)$ be a smooth generalized plane. Suppose that $\pi_{\P}:\F\to\P$ and $\pi_{\L}:\F\to\L$ are submersions between compact manifolds so that:
 \begin{itemize}
\item[SPP1:] For any two distinct lines $l_1$, $l_2\in\L$, $\pi_{\P}(\pi_{\L}^{-1}(l_1))$ intersects $\pi_{\P}(\pi_{\L}^{-1}(l_2))$ transversely in a single point $p_0\in\P$.  
\item[SPP2:] For any two distinct points $p_1$, $p_2\in\P$, $\pi_{\L}(\pi_{\P}^{-1}(p_1))$ intersects $\pi_{\L}(\pi_{\P}^{-1}(p_2))$ transversely in a single point $l_0\in\L$.
\end{itemize}
Then we call the tuple $\PLF$ a {\bf smooth projective plane}.

\begin{displaymath}
    \xymatrix{
        & \ar[dl]_{\pi_\L} \F \ar[dr]^{\pi_{\P}}\\
                           \L&& \P}
\end{displaymath}
\end{definition}
Recall that for any smooth projective plane, $\P$ and $\L$ are both diffeomorphic to $\mathbb{RP}^2$. The point rows and line pencils are smooth embedded 1-sphere. 
\begin{remark}
The real projective plane is a 2-dimensional smooth projective plane whose point rows (lines) are geodesics of the standard metric of $\mathbb{RP}^2$. 
\end{remark}

\begin{remark}
\label{nullhomo}
Point rows of smooth projective plane are not null-homotopic in $\mathbb{RP}^2$. Suppose there exists a line $\ell_1$ such that $P_{\ell_1}=\pi_{\P}(\pi_{\L}^{-1}(\ell_1))$ is null-homotopic in $\mathbb{RP}^2$. Then $P_{\ell_1}$ divides $\mathbb{RP}^2$ into two connected components $D_1$ and $D_2$. Choose two points $p$, $q$ in $\P$ so that $p$ is in $D_1$ and $q$ is in $D_2$. The projective structure implies that there must exist a unique line $\ell_2$ join $p$ and $q$. Moreover, since point rows intersect transversely, $P_{\ell_2}$ must intersect $P_{\ell_1}$ at more than one point. This contradicts with SPP1 in Definition~\ref{SP1}.
\end{remark}

Our goal is to smoothly deform any smooth projective plane to the standard one. It is then necessary to deform the set of point rows into the set of geodesics. To be more precise, we give the definition of such a deformation as below.
\begin{definition}[Smooth homotopy of smooth projective planes]
\label{isotopy}
A {\bf smooth homotopy of smooth projective planes} consists of a smooth projective plane $\PLF$, and smooth homotopies $\pi_{\L}^t$, $\pi_{\P}^t$, for $t\in [0,1]$, such that for every $t\in [0,1]$, the tuple $\PLFt$ is a smooth projective plane. 
\end{definition}

\subsection{Curve shortening flow}
Curve shortening flow is a heat-type geometric flow which evolves curves in the direction of their curvature vector field. In this paper, we only consider curves on round $\mathbb{S}^2$ or $\mathbb{RP}^2$, so we define CSF and formulate Grayson's result of the long time existence solution to CSF on a compact surface $M^2$ as below:

\begin{definition}
Let $\gamma_0: \mathbb{S}^1 \to M^2$ be a smooth curve. We say that $\gamma: \mathbb{S}^1\times [0,t_0) \to M^2 $ is the solution to the {\bf curve shortening equation} with the initial data $\gamma_0$ if it satisfies 
\begin{eqnarray}
\label{CSF}
\frac{\partial \gamma}{\partial t} = \kappa N,
\end{eqnarray}
for all $t\in[0,t_0)$, where $[0,t_0)$ is the maximal interval on which the solution can be defined, and $\kappa$ is the geodesic curvature of $\gamma$ and $N$ is its unit normal vector.  
\end{definition}
\begin{theorem}[Theorem 0.1, \cite{Grayson89}]
\label{gray}
Let $\gamma_0:\mathbb{S}^1\to M^2$ be a smooth curve, embedded in $M^2$. Then the solution to (\ref{CSF}), $\gamma(t):\mathbb{S}^1\to M^2$, exists for $t\in[0,t_0)$. If $t_0$ is finite, then $\gamma(t)$ converges to a point. If $t_0$ is infinite, then the curvature of $\gamma(t)$ converges to zero in the $C^\infty$ norm.
\end{theorem}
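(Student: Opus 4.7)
The plan is to establish this in three stages: short-time existence, long-time control, and the asymptotic dichotomy. For short-time existence, I would write equation~\eqref{CSF} in a local parametrization, where it becomes a quasilinear strictly parabolic PDE for the graph function of $\gamma(t)$ over $\gamma_0$; standard parabolic theory (DeTurck trick, or direct Schauder estimates after fixing a gauge such as arc-length parametrization reparametrized by tangential diffeomorphisms) yields a unique smooth solution on some interval $[0,\tau)$. Embeddedness is preserved for a short time by continuity.

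Next I would show that the flow persists as long as the geodesic curvature $\kappa$ stays bounded. The key computations are the evolution equations
\begin{equation*}
\partial_t \kappa = \partial_s^2 \kappa + \kappa^3 + K\,\kappa, \qquad \partial_t\, ds = -\kappa^2\, ds,
\end{equation*}
where $K$ is the Gauss curvature of $M^2$ and $s$ is arc length. Differentiating repeatedly and applying the maximum principle gives $C^k$ bounds on $\kappa$ as long as $\sup|\kappa|<\infty$, and standard bootstrap then extends the solution past any time with bounded curvature. Thus the maximal time $t_0$ is characterized by $\limsup_{t\to t_0}\max|\kappa(\cdot,t)|=\infty$ (if $t_0<\infty$) or the solution exists for all time.

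The heart of the proof is analyzing what happens at the maximal time. I would follow a blow-up strategy: at a sequence of points $(p_i,t_i)$ with $t_i\to t_0<\infty$ and $|\kappa|(p_i,t_i)\to\infty$, rescale parabolically by $|\kappa|(p_i,t_i)$. Since $M^2$ is compact, the rescaled surfaces converge locally smoothly to the Euclidean plane $\mathbb{R}^2$, and the rescaled flows subconverge to an ancient convex solution of CSF in the plane. The decisive ingredient is an isoperimetric-type monotonicity controlling the ratio of chord length to arc length along the curve (Grayson's ``no-bottleneck'' estimate); combined with Huisken's monotonicity formula, this rules out limits with non-round singularities and forces the limit to be a shrinking circle. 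Consequently the whole curve contracts to a point, which establishes the finite-time case. The main obstacle in the whole argument is precisely this step: proving that locally concentrated curvature cannot form without the entire curve collapsing, which requires delicate control on how arc-length chord distance can shrink relative to arc length during the flow.

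Finally, in the infinite-time case, integrating $\partial_t L = -\int \kappa^2\,ds$ shows $\int_0^\infty\!\!\int \kappa^2\,ds\,dt<\infty$, so along a sequence $t_i\to\infty$ the $L^2$ norm of $\kappa$ tends to zero. To upgrade this to $C^\infty$ decay, I would combine interpolation inequalities on $\mathbb{S}^1$ with the evolution equations for $\partial_s^k\kappa$: once $\int\kappa^2\,ds$ is small and higher derivative norms are a priori bounded, a differential-inequality argument using the parabolic maximum principle shows each $\sup|\partial_s^k\kappa|$ decays to zero. This delivers $C^\infty$-convergence of $\kappa$ to zero and completes the theorem.
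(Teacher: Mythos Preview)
The paper does not prove this statement. Theorem~\ref{gray} is quoted verbatim from Grayson's paper \cite{Grayson89} as background material and is invoked without proof; the paper only \emph{uses} it (together with Gage's Theorem~\ref{unique}) in the analysis of curve shortening on $\mathbb{S}^2$. So there is no ``paper's own proof'' to compare against.

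Your outline is a coherent sketch of how the result is established, though it blends Grayson's original 1989 argument with later technology. Grayson's paper proceeds by direct geometric estimates (controlling the isoperimetric profile and ruling out curvature blow-up via chord--arc type quantities) rather than by parabolic rescaling to an ancient solution classified via Huisken's monotonicity formula; the blow-up/monotonicity route you describe is closer to the subsequent treatments by Huisken and by Altschuler--Grayson. Either approach is valid, but if you intend this as a summary of \cite{Grayson89} specifically, you should be aware that the monotonicity-formula machinery is anachronistic there. For the purposes of the present paper none of this matters: the theorem is taken as a black box.
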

In the round sphere case, $M^2=\mathbb{S}^2$, the fact that the geodesic curvature tends uniformly to zero implies that the curve $\gamma(t)$ is, for large $t$, close to some great circle $\bar\gamma(t)$; however, Grayson's methods do not imply that $\gamma(t)$ converges to a (fixed) great circle, i.e. that one may choose $\bar\gamma(t)$ to be independent of $t$. This was proven by Gage \cite{Gage}:
\begin{theorem}[Theorem 5.1, \cite{Gage}]
\label{unique}
A simple closed curve $\gamma$ on the sphere of radius $1/C$ which divides the sphere into two pieces of equal area and whose total space curvature $\int (\kappa^2+C^2)^{1/2}ds$ is less than $3\pi$ converges to a great circle under curve shortening flow.
\end{theorem}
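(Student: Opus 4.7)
The plan is to combine Grayson's long-time existence theorem with an isoperimetric inequality on $\mathbb{S}^2_{1/C}$ and a linearization argument near the limiting great circle. First I would verify that the bisection hypothesis is preserved under the flow. Letting $A(t)$ denote the area of one of the two components enclosed by $\gamma(t)$, one has $A'(t)=-\int \kappa\, ds$, and Gauss--Bonnet on a disc on the sphere of Gauss curvature $C^2$ gives $\int \kappa\, ds = 2\pi - C^2 A$. When $A(0)=2\pi/C^2$ this integral vanishes, so $A(t)\equiv 2\pi/C^2$ for all $t$, and hence by the spherical isoperimetric inequality $L(t)\geq 2\pi/C$, the length of a great circle.

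Next I would use the total space curvature bound $\int(\kappa^2+C^2)^{1/2}\,ds<3\pi$ together with Theorem~\ref{gray} to rule out collapse to a point: the bound forces the curve to remain sufficiently spread out, so $t_0=\infty$ and $\kappa\to 0$ in $C^\infty$. Monotonicity of length, $L'(t)=-\int\kappa^2\, ds$, together with the lower bound $L(t)\geq 2\pi/C$ then gives $L(t)\to L_\infty\geq 2\pi/C$ and $\int_0^\infty\int\kappa^2\, ds\, dt <\infty$. Combining the $C^\infty$ decay of $\kappa$ with standard compactness arguments, every sequence $t_n\to\infty$ has a subsequence along which $\gamma(t_n)$ converges in $C^\infty$ to some great circle $\bar\gamma_\infty$; moreover $L_\infty = 2\pi/C$.

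The crux, and the main obstacle, is to upgrade this subsequential convergence to convergence to a single great circle. For this I would linearize CSF around a fixed great circle: writing a nearby bisecting curve as a normal graph $u(\theta,t)$ over $\bar\gamma$, the linearized evolution on a sphere of radius $1/C$ is $u_t=u_{\theta\theta}+C^2 u$ on $\mathbb{S}^1$ of length $2\pi/C$. The operator $-\partial_\theta^2-C^2$ has a two-dimensional kernel, spanned by the first harmonics, corresponding exactly to infinitesimal rotations of $\bar\gamma$ on the sphere; the zero-frequency eigenfunction is killed by the preserved bisection constraint $\int u\, d\theta=0$; and all remaining modes are strictly positive eigenvalues, yielding exponential decay of the orthogonal complement.

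Once the curve is close enough to $\bar\gamma_\infty$ in $C^{2,\alpha}$ (which holds for all large $t$ by subsequential convergence and the uniform $C^\infty$ bounds), I would appeal to a center-stable / \L{}ojasiewicz type argument at the critical submanifold of great circles: the "hyperbolic" modes decay exponentially, while the kernel direction corresponds to motion along the finite-dimensional family of great circles, whose velocity can be controlled in terms of $\|\kappa\|_{L^2}$, which is itself integrable in time. This integrability pins down a unique limiting great circle, completing the proof.
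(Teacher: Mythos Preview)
This theorem is not proved in the present paper: it is quoted from Gage's paper \cite{Gage} as background in the Preliminaries section, and no argument is supplied here. So there is no ``paper's own proof'' to compare against. What the paper \emph{does} prove is a related but distinct statement, Theorem~\ref{EX}, which assumes from the outset that the curve is already $C^{k+2}$ $\delta$-close to some great circle and then establishes exponential convergence to a (possibly different) fixed great circle. The machinery developed there---the Poincar\'e-type inequality for $\kappa$ (Lemma~\ref{Poin1}), the decay estimate of Lemma~\ref{curvedecaythm}, and the trapping argument of Proposition~\ref{he0}---is close in spirit to the linearization step you outline in your last two paragraphs.

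Regarding your sketch itself, two points are worth flagging. First, the $3\pi$ hypothesis is not what prevents collapse to a point: the bisection property alone already forces $L(t)\geq 2\pi/C$ for all $t$, so $t_0=\infty$ follows from Grayson without invoking the curvature bound. In Gage's actual argument the $3\pi$ bound enters elsewhere, in controlling integral quantities along the flow. Second, your final step asserts that the drift along the family of great circles is controlled by $\|\kappa\|_{L^2}$, ``which is itself integrable in time.'' But what you have from $L'(t)=-\int\kappa^2\,ds$ is only $\int_0^\infty\|\kappa\|_{L^2}^2\,dt<\infty$, not $\int_0^\infty\|\kappa\|_{L^2}\,dt<\infty$. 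Upgrading to the latter is exactly the nontrivial part of the uniqueness-of-limit argument, and it requires either an explicit exponential decay estimate (as in the paper's Lemma~\ref{curvedecaythm}) or a genuine \L{}ojasiewicz--Simon inequality; it does not follow from the subsequential convergence and monotonicity alone. As written, this is a gap in your outline.
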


We now recall some computations on the evolution equations for curvature functions from \cite{Gage} and \cite{Grayson89}. Let $\gamma(x,t):S^1\times [0,\infty)\to S^2$ be the solution to the CSF with a smooth initial condition. The arc length $s$ is defined by $ds=|\frac{\partial \gamma}{\partial x} |dx$. Let $\nu=\displaystyle|\partial \gamma/\partial x|$. A computation shows $\nu_t=-\kappa^2\nu$. The variables $x$ and $t$ are independent so $\displaystyle\partial x\partial t=\partial t\partial x$. For the arc-length parameter $s$, if we switch the order of differentiating $s$ and $t$, the following equation has to be satisfied:
\begin{equation}
\label{switchst}
\frac{\partial}{\partial t}\frac{\partial}{\partial s}=\frac{\partial}{\partial s}\frac{\partial}{\partial t } + \kappa^2\frac{\partial}{\partial s}.
\end{equation}
The curvature $\kappa$ evolves according to
\begin{eqnarray}
\label{knest0}
\frac{\partial}{\partial t} k =k^{(2)}+k+k^3.
\end{eqnarray}
For $n\geq 1$, we let $\kappa^{(n)}$ stand for $\displaystyle\frac{\partial^n \kappa}{\partial s^n}$ (note that $\kappa^{(0)}=\kappa$). By \eqref{switchst} and \eqref{knest0}, one can derive that $\kappa^{(n)}$ evolves according to
\begin{equation}
\begin{split}
\label{knest}
\frac{\partial}{\partial t}\kappa^{(n)}=\kappa^{(n+2)}+\kappa^{(n)}+(3+n)(\kappa^2)(\kappa^{(n)})+\sum_{\substack{{i+j+r=n-1} \\0\leq i,j,r\leq n-1}}C_{ijr} \kappa^{(i)}\kappa^{(j)}\kappa^{(r)},
\end{split}
\end{equation}
where $C_{ijr}$'s are constants depending on $n$. Integration by parts yields:
\begin{equation}
\begin{split}
\label{intk1}\frac{\partial}{\partial t} \int (\kappa^{(n)})^2ds =&\int \Big(-2(\kappa^{(n+1)})^2+2(\kappa^{(n)})^2+(2(3+n)-1)(\kappa^2)(\kappa^{(n)})^2\\
&+2\kappa^{(n)}\sum_{\substack{{i+j+r=n-1} \\0\leq i,j,r\leq n-1}}C_{ijr} \kappa^{(i)}k^{(j)}\kappa^{(r)}\Big)ds. 
\end{split}
\end{equation}

Gage \cite{Gage} observed the following,
\begin{lemma}
\label{intkiszero}
Any simple closed smooth area-bisecting curve on $\mathbb{S}^2$ remains area-bisecting under curve shortening flow for all time.
\end{lemma}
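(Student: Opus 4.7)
The approach I would take is to reduce the statement to a scalar ODE for the enclosed area and then apply ODE uniqueness. Let $A(t)$ denote the area of one of the two topological disks into which $\gamma(t)$ separates $\mathbb{S}^2$; since Grayson's theorem (Theorem~\ref{gray}) guarantees $\gamma(t)$ remains smoothly embedded on its interval of existence, this disk is well defined throughout. The plan is to show that $A$ satisfies a first-order linear ODE with fixed point $2\pi$, so that the hypothesis $A(0) = 2\pi$ forces $A(t) \equiv 2\pi$.

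First I would compute $A'(t)$ from the first variation of area,
\[
A'(t) = \int_{\gamma(t)} \langle \partial_t \gamma, \nu \rangle \, ds,
\]
where $\nu$ is the outward unit conormal to the enclosed disk. Substituting the curve shortening equation $\partial_t \gamma = \kappa N$ reduces this to $\pm \int_{\gamma(t)} \kappa \, ds$, with the sign determined by the orientation of $N$ relative to $\nu$. Next I would apply the Gauss--Bonnet theorem to the disk: since $\mathbb{S}^2$ has Gaussian curvature $K \equiv 1$ and the Euler characteristic of a disk is $1$,
\[
A(t) + \int_{\gamma(t)} \kappa \, ds = 2\pi
\]
when $\kappa$ is measured with respect to the appropriate conormal. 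Combining the two displays yields an ODE of the form $A'(t) = A(t) - 2\pi$, whose unique solution with $A(0) = 2\pi$ is the constant function $A \equiv 2\pi$.

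The only real obstacle is bookkeeping of sign conventions: one must reconcile the ``curvature normal'' $N$ appearing in the CSF equation with the outward conormal $\nu$ in the first variation formula, and with the sign convention used in Gauss--Bonnet, in order to confirm that the resulting linear ODE has its fixed point exactly at $A = 2\pi$. Once those signs are aligned, the lemma is immediate from uniqueness of solutions to a first-order linear ODE. As a bonus observation, the identity $A(t) \equiv 2\pi$ keeps the enclosed area bounded away from zero, so the curve cannot shrink to a point in finite time; Theorem~\ref{gray} then upgrades existence to all $t \in [0,\infty)$, matching the ``for all time'' in the statement of the lemma.
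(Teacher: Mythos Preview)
Your proposal is correct and lands on the same mechanism as the paper: reduce the area-bisecting condition, via Gauss--Bonnet, to a scalar linear ODE whose fixed point is the bisecting value, then invoke ODE uniqueness. The execution differs only cosmetically. The paper tracks $\int_\gamma \kappa\,ds$ and computes its time derivative directly from the curvature evolution $\kappa_t=\kappa''+\kappa+\kappa^3$ together with $\partial_t(ds)=-\kappa^2\,ds$, obtaining $\frac{d}{dt}\int\kappa\,ds=\int\kappa\,ds$. You instead track the enclosed area $A(t)$ and compute $A'(t)$ from the first variation of area, then use Gauss--Bonnet to close the loop, arriving at $A'=A-2\pi$. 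The two ODEs are related by the substitution $A=2\pi-\int\kappa\,ds$, so they are equivalent. Your route is slightly more geometric and avoids quoting the explicit form of $\kappa_t$; the paper's route is marginally more self-contained given that the curvature evolution equation is already recorded there. Your closing remark---that $A\equiv 2\pi$ rules out collapse to a point and hence, by Grayson's dichotomy, secures existence for all $t\in[0,\infty)$---is a useful addendum that the paper leaves implicit.
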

\begin{proof}
By Gauss-Bonnet Formula, any curve on $\mathbb{S}^2$ which bisects the surface area, its curvature satisfies $\int \kappa ds =0$ and vice versa. Flowing such a curve by CSF, we have\begin{eqnarray}
\begin{split}
\frac{d}{dt}\int \kappa ds&=\int \frac{\partial \kappa}{\partial t}ds+\int \kappa \frac{\partial}{\partial t} ds\\
&=\int (\kappa^{(2)}+\kappa+\kappa^3) ds+\int \kappa(-\kappa^2) ds\\
&=\int \kappa^{(2)}+\kappa ds.
\end{split}
\end{eqnarray}
Because $\kappa^{(n)}$ is $2\pi$ periodic, using integration by parts we get $ \int \kappa^{(2)} ds=0$. The solution to the following initial value problem
\begin{eqnarray}
\begin{split}
\frac{d}{dt}\int_{\gamma_t} \kappa ds&=\int_{\gamma_t} \kappa ds\\
\int_{\gamma_0}\kappa ds&=0
\end{split}
\end{eqnarray}
equals $\int_{\gamma_t}\kappa ds=e^t\int_{\gamma_0}\kappa ds=0$.
Hence the condition $\int_{\gamma_t} \kappa=0$ is preserved for any time.
\end{proof}
We recall Wirtinger's inequality (Poincar\'e inequality of dimension one) and Gronwall's inequality here:
\begin{lemma}[Wirtinger's inequality, \cite{Hardy}]
\label{Pinequalityonedim}
Let $f$ be a periodic real function with period $2\pi$ and let $f'\in L^2$. Then, if $\int_0^{2\pi} f (x) dx=0$, the following inequality holds
\begin{equation}
\label{Wirting2}
\int_0^{2\pi} f(x)^2 dx\leq \int_0^{2\pi} (f'(x))^2 dx.
\end{equation}  
The equality holds if and only if $f=A\sin x+B\cos x$, where $A$ and $B$ are constants. 
\end{lemma}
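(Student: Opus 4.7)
The plan is to prove Wirtinger's inequality by a Fourier series argument, which gives both the inequality and the equality characterization simultaneously. Since $f$ is $2\pi$-periodic and $L^2$, expand it as a Fourier series
\begin{equation*}
f(x) = \frac{a_0}{2} + \sum_{n=1}^{\infty} \bigl(a_n \cos nx + b_n \sin nx\bigr).
\end{equation*}
The hypothesis $\int_0^{2\pi} f(x)\,dx = 0$ forces $a_0 = 0$, killing the zero mode.

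Next, I would differentiate term by term to obtain the Fourier series of $f'$:
\begin{equation*}
f'(x) = \sum_{n=1}^{\infty} n\bigl(-a_n \sin nx + b_n \cos nx\bigr),
\end{equation*}
which is legitimate because $f' \in L^2$ (so the Fourier coefficients of $f'$ are exactly the formal derivatives of the coefficients of $f$, by standard Fourier theory). Apply Parseval's identity to both series to obtain
\begin{equation*}
\int_0^{2\pi} f(x)^2\,dx = \pi \sum_{n=1}^{\infty} \bigl(a_n^2 + b_n^2\bigr), \qquad \int_0^{2\pi} f'(x)^2\,dx = \pi \sum_{n=1}^{\infty} n^2\bigl(a_n^2 + b_n^2\bigr).
\end{equation*}

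Since $n^2 \geq 1$ for every $n \geq 1$, subtracting term by term yields
\begin{equation*}
\int_0^{2\pi} f'(x)^2\,dx - \int_0^{2\pi} f(x)^2\,dx = \pi \sum_{n=1}^{\infty} (n^2-1)\bigl(a_n^2 + b_n^2\bigr) \geq 0,
\end{equation*}
which is precisely the desired inequality. For the equality case, the right-hand sum vanishes if and only if $a_n = b_n = 0$ for every $n \geq 2$, so $f(x) = a_1 \cos x + b_1 \sin x$, giving the stated characterization with $A = b_1$, $B = a_1$.

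There is no real obstacle here, as this is a classical fact; the only point requiring care is the termwise differentiation of the Fourier series, which is justified by the $L^2$ hypothesis on $f'$ via an integration by parts identifying the Fourier coefficients of $f'$ with $n$ times those of $f$. An alternative one-line proof proceeds by expanding $\int_0^{2\pi} (f' - \text{(projection onto span of }\cos x, \sin x\text{)})^2$, but the Fourier approach above is cleaner and yields the equality case directly.
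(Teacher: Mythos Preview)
Your proof is correct and is precisely the classical Fourier-series argument for Wirtinger's inequality. The paper does not supply its own proof of this lemma; it simply cites it from Hardy's \emph{Inequalities}, so there is nothing to compare against beyond noting that your argument is the standard one found in that reference.
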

\begin{remark}
If the function $f$ in Lemma~\ref{Pinequalityonedim} is smooth, then for every $n$,
\begin{equation}
\label{Wirting}
\int_0^{2\pi} (f^{(n-1)}(x))^2 dx\leq \int_0^{2\pi} (f^{(n)}(x))^2 dx.
\end{equation} 
 \end{remark}

\begin{lemma}[Gronwall's inequality]
\label{Gronwall}
Let $\eta(\cdot)$ be a nonnegative, absolutely continuos function on $[0,T]$ which satisfies for a.e. $t$ the differential inequality
\begin{equation}
\eta'(t)\leq \phi(t)\eta(t)+\psi(t).
\end{equation}
where $\phi(t)$ and $\psi(t)$ are nonnegative, integrable functions on $[0,T]$. Then
\begin{equation}
\eta(t)\leq e^{\int_0^t\phi(s) ds}\Big(\eta(0)+\int_0^t\psi(s)ds\Big).
\end{equation}
\end{lemma}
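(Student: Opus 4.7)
The plan is to apply the standard integrating factor technique for linear first-order ODEs, adapted to the inequality setting. First I would introduce the cumulative exponent $\Phi(t) := \int_0^t \phi(s)\,ds$, which by the hypothesis on $\phi$ is absolutely continuous and nondecreasing on $[0,T]$, with $\Phi'(t) = \phi(t)$ for almost every $t$.

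Next I would consider the auxiliary function $f(t) := e^{-\Phi(t)}\eta(t)$. Since $\eta$ and $e^{-\Phi}$ are both absolutely continuous on the compact interval $[0,T]$, so is their product, and the product rule gives almost everywhere
\begin{equation*}
f'(t) = e^{-\Phi(t)}\bigl(\eta'(t) - \phi(t)\eta(t)\bigr) \leq e^{-\Phi(t)}\psi(t),
\end{equation*}
where the inequality uses the hypothesized differential inequality on $\eta$. Integrating this bound from $0$ to $t$ via the fundamental theorem of calculus for absolutely continuous functions produces
\begin{equation*}
e^{-\Phi(t)}\eta(t) - \eta(0) \;\leq\; \int_0^t e^{-\Phi(s)}\psi(s)\,ds.
\end{equation*}

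Finally I would multiply through by $e^{\Phi(t)}$ and exploit two elementary monotonicity facts: since $\phi \geq 0$ the function $\Phi$ is nondecreasing and nonnegative, so $e^{-\Phi(s)} \leq 1$ for all $s \in [0,t]$, and since $\psi \geq 0$ this gives $\int_0^t e^{-\Phi(s)}\psi(s)\,ds \leq \int_0^t \psi(s)\,ds$. Combining these estimates yields precisely the advertised bound $\eta(t) \leq e^{\Phi(t)}\bigl(\eta(0) + \int_0^t \psi(s)\,ds\bigr)$. There is no real obstacle in this proof; the only point warranting care is the justification that the product rule and fundamental theorem of calculus apply in the absolutely continuous category, which is a standard measure-theoretic fact and entails no delicate estimates.
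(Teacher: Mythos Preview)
Your argument is correct and is the standard integrating-factor proof of Gronwall's inequality. The paper does not actually supply a proof of this lemma; it is stated as a known result and used as a tool elsewhere, so there is nothing to compare against beyond noting that your derivation is the classical one.
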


\subsection{Curve shortening equation in local coordinates}
In \cite{Angenent}, the corresponding PDE for CSF in local coordinates was derived in a general form ((67) in page 39). Here we consider a parametrization of $\mathbb{S}^2$ at $p$ as follows:
\begin{eqnarray}
\label{graph}
\mathbf{x}(x,z)&=&\Big(
\frac{\cos x}{\sqrt{1+z^2}}, 
\frac{\sin x}{\sqrt{1+z^2}},\frac{z}{\sqrt{1+z^2}}\Big),
\end{eqnarray}
where $x\in [0,2\pi]$, $z\in [-r,r]$, for some small $r>0$, and the metric $g$ on $\mathbb{S}^2$ can be written as
\[
g=\left( \begin{array}{cc}
       \frac{1}{1+z^2} & 0        \\
     0 & \frac{1}{(1+z^2)^2}
     \end{array}\right)
     \]
for \begin{eqnarray*}
E&=&\frac{\partial}{\partial x}\mathbf{x}(x,z)\cdot\frac{\partial}{\partial x}\mathbf{x}(x,z) =\frac{1}{1+z^2}\\
F&=&0\\
G&=&\frac{\partial}{\partial z}\mathbf{x}(x,z)\cdot\frac{\partial}{\partial z}\mathbf{x}(x,z) =\frac{1}{(1+z^2)^2}.
\end{eqnarray*}

Let $\sigma:\mathbb{S}^1\times [-r,r]\to \mathbb{S}^2$ be a local diffeomorphism such that the parametrization of $\gamma_0$ is given by $x\mapsto \sigma(x,0)$. Firstly, we compute the unit tangent $T$ to the graph $\{(x,h(x))|x\in \mathbb{S}^1\}$, and the geodesic curvature $\kappa$ of it as below:
\begin{equation}
\label{unitvector}
T=\frac{1}{\nu}(\partial_x+h_x\partial_z),\;\;\nu=\sqrt{\frac{1+h^2+(h_x)^2}{(1+h^2)^2}}.
\end{equation}
Since the Christoffel symbols of the Riemannian connection are $\Gamma_{xx}^z=h$, $\Gamma_{hh}^h=\frac{-2h}{1+h^2}$, $\Gamma_{xz}^x=\frac{-h}{(1+h^2)}$, we have
\begin{eqnarray*}
\begin{split}
\nabla_TT=&\frac{1}{\nu^2}[(T^x\partial_zT^x+T^xT^z\Gamma_{xz}^x)\partial_x+T^z\partial_zT^z+T^xT^z\Gamma_{zx}^z)\partial_x\\
&+T^z\partial_zT^z+T^zT^x\Gamma_{xz}^z+T^xT^z\Gamma_{zz}^z)\partial_z+T^x\partial_xT^z+T^xT^x\Gamma_{xx}^z)\partial_z]\\
=&\frac{1}{\nu^2}[\frac{-2hh_x}{(1+h^2)\partial_x}+(h_{xx}+h-\frac{2hh_x^2}{(1+h^2)})\partial_z].
\end{split}
\end{eqnarray*}
Hence $\kappa=T\wedge\nabla_TT$ is
\begin{equation}
\label{kh}
\frac{1}{\nu^3}\left( \begin{array}{cc}
      1& h_x        \\
     \frac{-2h_xh}{1+h^2}& h_{xx}+h-\frac{2h_x^2h}{1+h^2}
     \end{array}\right)=\frac{1}{\nu^3}(h_{xx}+h)
.\end{equation}

Flowing $\gamma$ by CSF, the corresponding time evolution equation for $h$ can be derived by $
T\wedge h_t\partial_z=T\wedge\nabla_TT$. Since $$T\wedge h_t\partial_z=
\frac{1}{\nu^3}\left( \begin{array}{cc}
      1& h_x        \\
 0& h_t
     \end{array}\right)=
\frac{1}{\nu}h_t$$ together with (\ref{kh}) we have
\begin{equation} \label{h}
h_t=\frac{(1+h^2)^2}{1+h^2+(h_x)^2}(h_{xx}+h).
\end{equation}
For instance, it is clear that under CSF, great circles on $\mathbb{S}^2$ will not move at all (geodesic curvature is 0). They correspond to solutions $h(x)=a \sin x+b\cos x$, where $a$, $b$ are constants, to (\ref{h}). We can look at two easy examples: if $h(x)=0$ ($a,\;b$ are both 0), the corresponding great circle is the equator and if $h(x)=\cos x$, we obtain the corresponding great circle by intersecting the plane $x=z$ with the sphere. 

\subsection{Zeros of linear parabolic PDEs and intersection points of pairs of solutions to CSF}
\label{zeros}
In \cite{Angenent88}, the author studies the zero set of a solution $u(x,t)$ of the equation
\begin{equation}
\label{parabolic}
u_t=\mathbf{a}(x,t) u_{xx}+\mathbf{b}(x,t)u_x+\mathbf{c}(x,t) u
\end{equation}
under the assumptions
\begin{enumerate}
\item[a1:] $\mathbf{a}$, $\mathbf{a}^{-1}$, $\mathbf{a}_{t}$, $\mathbf{a}_{x}$, and $\mathbf{a}_{xx}$ $\in L^{\infty}$
\item[a2:] $\mathbf{b}$, $\mathbf{b}_t$, and $\mathbf{b}_x$ $\in L^{\infty}$
\item[a3:] $\mathbf{c}\in L^{\infty}$ 
\end{enumerate}
where the number of zeros of $u(\cdot,t)$ is defined to be the supremum over all $k$ such that there exists $0<x_1<x_2<...<x_k<1$ with $u(x_i,t)\cdot u(x_{i+1},t)<0$ for $i=1,2,...,k-1$. Let $z(t)$ denote this supremum. The following theorem says that $z(t)$ is a nonincreasing function with time.
\begin{theorem}[Theorem C, \cite{Angenent88}]
\label{interpara}
Let $u:[0,1]\times [0,T]\to\R$ be a bounded solution of (\ref{parabolic}) which satisfies either Dirichlet, Neumann or periodic boundary condition. Assume that the coefficients $a$, $b$ and $c$ satisfy assumptions a1, a2 and a3, and in addition, in the case of Neumann boundary conditions, assume that $\mathbf{a}\equiv 1$, $\mathbf{b}\equiv 0$. Let $z(t)$ denote the number of zeros of $u(\cdot, t)$ in $[0,1]$, then
\begin{itemize}
\item[(a)] for $t>0$, $z(t)$ is finite
\item[(b)] if $(x_0,t_0)$ is a multiple zero of $u$ (i.e. if $u$ and $u_x $ vanish simultaneously), then for all $t_1<t_0<t_2$, $z(t_1)>z(t_2)$. 
\end{itemize}
\end{theorem}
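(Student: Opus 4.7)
The plan is to reduce the global statement to local analysis at isolated zeros of $u(\cdot,t)$, and then, at a multiple zero, to compare $u$ to an exact solution of the heat equation.

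For (a), I would use standard interior regularity for linear parabolic equations: the smoothness hypotheses a1--a3 on $\mathbf{a},\mathbf{b},\mathbf{c}$ allow a bootstrap yielding that $x\mapsto u(x,t)$ is real analytic on $[0,1]$ for each $t>0$. If $u(\cdot,t)\not\equiv 0$, its zero set is then discrete, hence finite by compactness. (If $u(\cdot,t_\ast)\equiv 0$ at some positive time, backward uniqueness forces $u\equiv 0$, and the theorem is vacuous.)

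For (b), I would prove two complementary facts: (i) near a simple zero $(x_0,t_0)$ with $u_x(x_0,t_0)\neq 0$, the implicit function theorem produces a $C^1$ curve $x=\xi(t)$ of zeros along which $u_x$ does not vanish, so $z$ is locally constant; (ii) across a multiple zero, $z$ drops by at least one. By (a), only finitely many multiple zeros occur in any compact subinterval of $(0,T]$, so (i) and (ii) combine to give the global inequality $z(t_1)>z(t_2)$.

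The core step is the analysis at a multiple zero $(x_0,t_0)$ of order $n\geq 2$ (i.e.\ $\partial_x^j u(x_0,t_0)=0$ for $j<n$ and $\partial_x^n u(x_0,t_0)\neq 0$). Since $\mathbf{a}>0$ is smooth, a change of spatial variable and multiplication of $u$ by a smooth nonvanishing factor $e^{\phi(x,t)}$ reduces the equation in a neighborhood of $(x_0,t_0)$ to the constant-coefficient heat equation $v_t=v_{xx}$, without affecting the sign pattern of $u$. The analytic solution $v$ has a Taylor expansion beginning with the $n$-th heat polynomial,
\begin{equation*}
v(x,t)=c\,H_n(x-x_0,t-t_0)+R(x,t),
\end{equation*}
with $H_2=x^2+2t$, $H_3=x^3+6tx$, etc., and $R$ of strictly higher order in the parabolic scaling $x\sim\lambda$, $t-t_0\sim\lambda^2$. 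A direct computation shows $H_n(\cdot,s)$ has exactly $n$ sign changes for $s<0$ small and at most one for $s>0$ small; rescaling $x\mapsto\lambda x$, $t-t_0\mapsto\lambda^2 s$ with $\lambda\to 0$ shows that the sign-change count of $v$ on horizontal slices inside a small parabolic cylinder matches that of $cH_n$. This gives the required drop of at least $n-1\geq 1$ across $t_0$.

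The main obstacle I anticipate is the perturbation step, i.e.\ showing rigorously that the higher-order remainder $R$ cannot alter the sign-change count on slices $\{t=\mathrm{const}\}$ inside the parabolic cylinder. This is what forces the regularity hypotheses a1--a3 to be strong enough: one needs $|R|\ll|cH_n|$ both on the lateral boundary of the cylinder and in the interior between consecutive sign changes of $H_n$, so that a Rolle/winding-number argument transfers the zero count from $H_n$ to $v$. Once this is in place, assertions (a) and (b) follow by combining the analytic-slice finiteness with the local curve-of-zeros picture at simple zeros and the heat-polynomial drop at multiple zeros.
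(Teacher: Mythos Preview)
The paper does not prove this theorem; it is quoted verbatim as Theorem~C of \cite{Angenent88} and used only as a black box (to obtain the zero-count statements for the linearized flow and for pairs of CSF solutions). So there is no in-paper argument to compare your proposal against.

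That said, your sketch has two genuine gaps relative to Angenent's actual proof. First, the analyticity claim in your argument for (a) is not justified by hypotheses a1--a3: assumption a3 only requires $\mathbf{c}\in L^\infty$, and with a merely bounded zeroth-order coefficient solutions of $u_t=\mathbf{a}u_{xx}+\mathbf{b}u_x+\mathbf{c}u$ are in general not even $C^2$ in $x$, let alone real analytic. Second, your reduction ``to the constant-coefficient heat equation $v_t=v_{xx}$'' is too strong: the change of spatial variable (using a1) and the gauge $u\mapsto e^{\phi}u$ (using a2) eliminate $\mathbf{a}$ and $\mathbf{b}$, but they leave a zeroth-order term, so one only arrives at $v_t=v_{xx}+\tilde{\mathbf{c}}\,v$ with $\tilde{\mathbf{c}}\in L^\infty$. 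Eliminating $\tilde{\mathbf{c}}$ would require solving a Riccati-type equation for $\phi$, which needs more regularity on $\mathbf{c}$ than a3 provides.

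Angenent's proof proceeds from this normal form $v_t=v_{xx}+\tilde{\mathbf{c}}v$ and establishes directly, without any analyticity, that at every zero $(x_0,t_0)$ with $t_0>0$ the solution has a finite order of vanishing $n$ and, after parabolic rescaling, is asymptotic to the caloric polynomial $H_n$. The mechanism is a rescaling/compactness (or similarity-variable spectral) argument together with parabolic estimates that control the remainder; this single lemma yields both (a) and the local drop in (b). Your heat-polynomial picture in the second half is the right intuition, but the remainder bound must come from these PDE estimates rather than from a Taylor expansion of an analytic function.
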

\begin{remark}
If a new point of intersection is developed, it must be a multiple zero. This contradicts with (b). Therefore $z(t)$ cannot increase with time. 
\end{remark}

In \S\ref{LCSFequation}, we will derive the linearized curve shortening equation (LCSF):
\begin{equation}
\label{linearized}
v_t=(1+a(x,t)) v_{xx}+b(x,t)v_x+(1+c(x,t)) v,
\end{equation}
where $||a||_{C^k}$, $||b||_{C^k}$, and $||c||_{C^k}$ are all less than $\epsilon_k e^{-t}$ for some $\epsilon_k=\epsilon_k(k)$ sufficiently small. If we set $\mathbf{a}(x,t)=1+a(x,t)$, $\mathbf{b}(x,t)=b(x,t)$ and $\mathbf{c}(x,t)=1+c(x,t)$, then conditions a1, a2, and a3 are satisfied. Moreover, since we only consider smooth embedded area-bisecting curves on $\mathbb{S}^2$, the boundary condition for $v$ is periodic. We can apply Theorem~\ref{interpara} to the LCSF and conclude the following:
\begin{proposition}
\label{transversezeros}
The number of transverse zeros of the solution $v$ to the linearized flow (\ref{linearized}) cannot increase with time. 
\end{proposition}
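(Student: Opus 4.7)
The plan is to invoke Theorem~\ref{interpara} with coefficients $\mathbf{a}(x,t)=1+a(x,t)$, $\mathbf{b}(x,t)=b(x,t)$, $\mathbf{c}(x,t)=1+c(x,t)$, viewing (\ref{linearized}) as a scalar linear parabolic equation of the form (\ref{parabolic}). The only real work is to check hypotheses a1--a3 and to translate the conclusion, which concerns the sign-change count $z(t)$, into the statement about transverse zeros.

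Verifying the hypotheses is routine given the $C^k$ estimates $\|a\|_{C^k}, \|b\|_{C^k}, \|c\|_{C^k} \le \epsilon_k e^{-t}$: these furnish uniform $L^\infty$ control on $\mathbf{a}, \mathbf{a}_x, \mathbf{a}_{xx}, \mathbf{a}_t, \mathbf{b}, \mathbf{b}_x, \mathbf{b}_t$, and $\mathbf{c}$ on $\mathbb{S}^1\times [0,\infty)$. Choosing $\epsilon_0 < 1/2$ gives $\mathbf{a}\ge 1/2$, whence $\mathbf{a}^{-1}\in L^\infty$ as well. Since we are flowing closed curves, $v(\cdot,t)$ satisfies periodic boundary conditions, placing us in the periodic case of the theorem.

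With hypotheses in hand, Theorem~\ref{interpara} applies and gives that $z(t)$ is finite for $t > 0$ and, as spelled out in the remark immediately following the theorem, nonincreasing in $t$: a newly appearing sign change would have to arise at a multiple zero, but part (b) says a multiple zero forces a strict drop in $z$. Each transverse zero of $v(\cdot,t)$ contributes exactly one sign change, so the number of transverse zeros is bounded above by $z(t)$, with equality at generic times when all zeros are simple. Since multiple zeros can occur only at the countably many times where $z$ jumps down, generic times are dense, and monotonicity of $z$ forces monotonicity of the transverse-zero count. I anticipate no substantive obstacle: the proposition is essentially a direct application of Angenent's theorem, and the assumed exponential decay of $a, b, c$ makes verification of the hypotheses automatic.
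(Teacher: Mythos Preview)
Your proposal is correct and follows exactly the route the paper takes: set $\mathbf{a}=1+a$, $\mathbf{b}=b$, $\mathbf{c}=1+c$, verify hypotheses a1--a3 from the exponential $C^k$ decay of $a,b,c$, note the periodic boundary condition, and invoke Theorem~\ref{interpara}. The paper's own argument is in fact terser than yours---it simply asserts that a1--a3 hold and that the theorem applies---so your extra care in checking $\mathbf{a}^{-1}\in L^\infty$ and in translating the sign-change count $z(t)$ into the transverse-zero count only makes the reasoning more explicit.
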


Let $\alpha_0$ and $\beta_0$ be two smooth embedded curves on $\mathbb{RP}^2$ which intersect transversely at exactly one point. Let $\alpha_t$ and $\beta_t$ be the solutions to CSF with initial conditions $\alpha_0$ and $\beta_0$. By Theorem 1.3 in \cite{Angenent91}, the number of transverse intersections cannot increase with time. On the other hand, it won't decrease to zero since any curve lifted from $\mathbb{RP}^2$ to $\mathbb{S}^2$ bisects the surface area, and by Lemma~\ref{intkiszero} it remains to do so under CSF. Therefore one can conclude: 
\begin{proposition}
\label{interc}
Suppose $\alpha_t$ and $\beta_t$ are defined as above. At any $T>0$, the number of transverse intersection of $\alpha_T$ and $\beta_T$ stays one.  
\end{proposition}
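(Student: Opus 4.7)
The plan is to combine an upper bound on the transverse intersection count coming from Angenent's Sturmian-type theorem with a matching lower bound coming from the area-bisecting property of the lifted curves. Let $z(T)$ denote the number of transverse intersections between $\alpha_T$ and $\beta_T$ on $\mathbb{RP}^2$.

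The upper bound is immediate: by Theorem 1.3 of \cite{Angenent91} (recalled in the paragraph preceding the proposition), $z(\cdot)$ is a non-increasing function of time. Since $z(0)=1$, we obtain $z(T)\le 1$ for all $T\ge 0$.

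For the matching lower bound, I would first observe that $\alpha_0$ and $\beta_0$ are each non-null-homotopic in $\mathbb{RP}^2$: a null-homotopic embedded loop bounds a disc, and any smooth curve transverse to it must cross its boundary an even number of times, contradicting the hypothesis of a single transverse intersection. Because CSF preserves homotopy class and, by Grayson's Theorem~\ref{gray}, the flow exists for all $t\ge 0$ on non-contractible initial data, both $\alpha_t$ and $\beta_t$ remain non-null-homotopic. Lifting to the universal cover, each of $\tilde\alpha_t$ and $\tilde\beta_t$ is a single antipodally symmetric smooth embedded closed curve on $\mathbb{S}^2$, hence area-bisecting, and by Lemma~\ref{intkiszero} this property is preserved under CSF. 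A standard topological argument---the second area-bisecting curve cannot lie in the closure of a single half-area disc cut out by the first---forces the two lifts (and hence $\alpha_T,\beta_T$ themselves) to meet in at least one point for every $T\ge 0$.

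The main obstacle is to upgrade ``at least one intersection'' to ``at least one transverse intersection,'' since a tangential intersection would leave $z(T)$ equal to $0$ while the topological argument only ensures a nonempty intersection. To close this gap I would invoke the sharper form of Angenent's theorem (in the spirit of Theorem~\ref{interpara}(b)): at a time of non-transverse intersection, the local count of transverse zeros of the difference of the two graph representations drops strictly, and in fact by an even integer, since a simple tangency perturbs generically to $0$ or $2$ transverse intersections. Consequently the parity of $z(T)$ is preserved by the flow; starting from the odd value $z(0)=1$, the combined bounds $0\le z(T)\le 1$ together with this parity constraint force $z(T)=1$ for every $T>0$. Equivalently, the $\mathbb{Z}/2$-valued intersection number of $\alpha_t$ and $\beta_t$ in $\mathbb{RP}^2$ is a homotopy invariant equal to $1$ throughout the flow, providing the required lower bound in parity.
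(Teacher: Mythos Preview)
Your argument is correct and follows essentially the same two-step strategy as the paper: the upper bound $z(T)\le 1$ from Angenent's theorem, and the lower bound from the fact that the lifts to $\mathbb{S}^2$ remain area-bisecting under the flow (Lemma~\ref{intkiszero}), forcing the curves to intersect. The paper's proof is the short paragraph immediately preceding the proposition and stops there.

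Where you go further is in explicitly closing the gap between ``nonempty intersection'' and ``transverse intersection,'' which the paper leaves implicit. Your cleanest version of this is the homological one in your last sentence: since both curves represent the nonzero class in $H_1(\mathbb{RP}^2;\Z/2)$ (as you verify from the single transverse intersection at $t=0$), their $\Z/2$ intersection number is $1$ and is preserved under the homotopy given by the flow; hence whenever $\alpha_T$ and $\beta_T$ are transverse the count is odd, and together with $z(T)\le 1$ this forces $z(T)=1$. The intermediate parity claim (``drops by an even integer'') is also valid here but is less direct to justify than simply invoking the topological intersection number; the homology argument already gives you everything you need. Your extra justification that $\alpha_0,\beta_0$ are non-null-homotopic, and hence that Grayson's theorem yields long-time existence, is a useful detail the paper omits.
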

\begin{remark} Two different curves cannot coincide at any finite time since they would have infinitely many oriented tangencies which contradicts Theorem 1.1 in \cite{Angenent91}, page 175. 
\end{remark}

\section{Constructing a smooth homotopy using CSF}
\label{s3}

 \subsection{Long time behavior of an individual curve under CSF}
 In this section, $\gamma$ is assumed to be a smooth embedded closed curve in $\mathbb{S}^2$ that divides the surface area into two equal pieces and $\gamma_t$ denotes the solution to the CSF with initial data $\gamma_0=\gamma$.
  \label{ltb}
\begin{theorem}
\label{EX}
For any integer $k$, there is an $\epsilon_k>0$ such that for any $0<\epsilon\leq\epsilon_k$, there is an $\delta=\delta(\epsilon)>0$ so that if $\gamma$ is $C^{k+2}$ $\delta$-close to a great circle, then there is a (perhaps different) great circle $\gamma_g$ such that $d_{C^k}(\gamma_t, \gamma_g)\leq  2\epsilon e^{-t} $ for all $t\geq 0$.
\end{theorem}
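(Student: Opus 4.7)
The plan is to work in the coordinate chart (\ref{graph}) with the equator taken as the great circle that $\gamma$ is $\delta$-close to, represent $\gamma_t$ as a graph $h(\cdot,t)$, and combine energy estimates on the curvature derivatives from (\ref{intk1}) with Wirtinger's inequality to produce exponential decay. A separate analysis of the neutral ``tilting'' modes then identifies the target great circle $\gamma_g$.

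The linearization of (\ref{h}) at $h\equiv 0$ is $h_t = h_{xx}+h$, whose operator $L = \partial_x^2 + 1$ on $S^1$ has eigenvalues $1-n^2$ on $e^{inx}$: the constant mode $n=0$ is unstable, the modes $n=\pm 1$ are neutral and correspond to tilting the equator to a nearby great circle, and the modes $|n|\ge 2$ decay at rate at least $e^{-3t}$. The unstable constant mode is killed by area-bisection: by Gauss--Bonnet $\int\kappa\,ds\equiv 0$, and this is preserved by Lemma~\ref{intkiszero}. So only neutral and stable directions remain.

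Next I would apply (\ref{intk1}) to $E_n(t) := \int(\kappa^{(n)})^2\,ds$. By (\ref{kh}), at leading order $\kappa \approx h_{xx}+h$, whose $n=0,\pm1$ Fourier coefficients vanish (the $\pm 1$ because $1-n^2=0$, the $0$ because $\hat{h}_0 \approx 0$ from area bisection). Thus the Fourier support of $\kappa$ sits on $|n|\ge 2$ modulo errors quadratic in $h$; together with the mean-zero property, Wirtinger (Lemma~\ref{Pinequalityonedim} and (\ref{Wirting})) upgrades to $E_n \le \tfrac14 E_{n+1}$ with small corrections. Substituted into (\ref{intk1}) this gives
\[
E_n'(t) \;\le\; -c\,E_n(t) + (\text{small cubic terms})
\]
for some $c>0$. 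Gr\"onwall's inequality (Lemma~\ref{Gronwall}) together with a bootstrap on the smallness of $\kappa$ and its derivatives then yields $E_n(t)\le C_n\epsilon^2 e^{-ct}$ for every $0\le n\le k$, provided $\epsilon_k$ is chosen small enough. Sobolev embedding on $S^1$ converts this into exponential $C^k$ decay of $\kappa$, hence of the ``high frequency'' part $w$ of $h$ defined below.

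To identify the limit circle, decompose $h(x,t) = a(t)\sin x + b(t)\cos x + w(x,t)$ with $w\perp\{\sin x,\cos x\}$ in $L^2$. Projecting (\ref{h}) onto $\sin$ and $\cos$ gives $|a'(t)|, |b'(t)| = O(\|h\|_{C^2}^2)$, which is integrable in $t$, so $(a(t),b(t)) \to (a_\infty,b_\infty)$ with remainder bounded by $C\epsilon^2 e^{-ct}$. Take $\gamma_g$ to be the great circle graphed by $a_\infty\sin x + b_\infty\cos x$; combining the $w$ and $(a,b)$ estimates yields $d_{C^k}(\gamma_t,\gamma_g) \le 2\epsilon e^{-t}$ after possibly shrinking $\epsilon_k$ (the spectral gap actually gives a rate as fast as $e^{-3t}$, but $e^{-t}$ is sufficient for later use). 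The principal obstacle is closing the bootstrap at every derivative level: the cubic terms in (\ref{intk1}) involve $\kappa^{(i)}\kappa^{(j)}\kappa^{(r)}$ with $i+j+r=n-1$, so one must inductively control the lower-order $E_i$'s simultaneously, and the $C^{k+2}$ hypothesis on $\gamma$ is exactly what is needed to initialize the induction at the highest derivative $k+1$ appearing on the right-hand side of the $E_k$ estimate. One must also check that $h$ remains small enough for the graph representation to persist for all $t$, but this is automatic once the bootstrap closes.
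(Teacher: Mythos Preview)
Your proposal is correct and follows the same strategy as the paper: energy estimates on $\int(\kappa^{(n)})^2\,ds$ from (\ref{intk1}), an improved Poincar\'e inequality for $\kappa$ (coming from its near-orthogonality to the first Fourier modes), Gr\"onwall plus a bootstrap to keep the graph representation valid (the paper's Lemma~\ref{curvedecaythm} and Proposition~\ref{he0}), and finally integration of $h_t$ in time to locate the limiting great circle. The only substantive difference is how the Poincar\'e constant is sharpened below $1$: you read it off from $\kappa\approx h_{xx}+h$ and the spectrum of $\partial_x^2+1$, while the paper (Lemma~\ref{M}) uses the first variation of length under ambient rotations of $\mathbb{S}^2$ to manufacture test functions $u\approx\sin s$, $w\approx\cos s$ with $\int\kappa\,u\,ds=\int\kappa\,w\,ds=0$ --- the paper's argument is coordinate-free and works directly in arclength, whereas yours is more direct but requires tracking the $x\leftrightarrow s$ reparametrization (the paper's Lemma~\ref{dd'}) to feed into (\ref{intk1}).
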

\begin{remark}
In this case, we say that $\gamma_t$ converges uniformly exponentially to $\gamma_g$ in the $C^k$ norm.
\end{remark}
\begin{remark}
In fact, one can prove $d_{C^k}(\gamma_t, \gamma_g)\leq  2\epsilon e^{-\eta t} $ for some other constant $1<\eta<3$ using the same argument in our proof. 
\end{remark}

Let $\gamma_g(\theta): \mathbb{S}^1\to\mathbb{S}^2$ be a parametrization of a great circle, where $\theta$ is the usual angle parameter on $\mathbb{S}^1\subset\mathbb{R}^2$. For each great circle, there is a $C^2$ local diffeomorphism $\sigma_g:\mathbb{S}^1\times (-r,r)\to\mathbb{S}^2$ such that if $\gamma$ is $C^2$ close to $\gamma_g$, then there exists a $C^2$ function $h:[0,2\pi]\to(-r,r)$ so that $\gamma(\theta)=\sigma_g(\theta,h(\theta))$. In particular $\gamma_g(\theta)=\sigma_g(\theta,0)$. Note that we will show in Proposition~\ref{he0} that for any $\gamma_0$ which is $C^{k+2}$ close to a great circle, it stays close to a great circle for all time under the flow.

For a curve which is $C^2$ close to a great circle, there are two natural parametrizations, one by $\theta$, and one by arclength, $s=s(\theta)$ with $ds=|\gamma'(\theta)|d\theta$. The following Lemma says that the closeness does not depend on which of those two parametrizations we choose.

\begin{lemma}
\label{dd'}
For any $0<\delta<\frac{1}{2\pi}$, if $\gamma$ is $C^k$ $\delta$-close to a geodesic $\gamma_g$ when parametrized by $\theta$, then $\gamma$ is $C^k$ $2\delta$-close to $\gamma_g$ when parametrized by its arclength $s$, and vice versa.
\end{lemma}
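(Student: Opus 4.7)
The core observation is that the hypothesis $\|\gamma-\gamma_g\|_{C^k(\theta)} \leq \delta$ already forces the speed function $|\gamma'(\theta)|$ to be $\delta$-close to $|\gamma_g'(\theta)|\equiv 1$, so the change of variables between the $\theta$- and arclength-parametrizations is close to the identity in $C^k$. My plan is to first extract from the hypothesis the bound $\bigl||\gamma'(\theta)|-1\bigr|\leq \delta$, and from this deduce that the reparametrization $s(\theta)=\int_0^\theta |\gamma'(u)|\,du$ has derivative in $[1-\delta,1+\delta]$, so that $|s(\theta)-\theta|\leq 2\pi\delta<1$. The constraint $\delta<1/(2\pi)$ is exactly what is needed to guarantee that the inverse map $\theta(s)$ is globally defined and smooth.

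Next, for the $C^0$ comparison, I would decompose
\[
\gamma(\theta(s))-\gamma_g(s) \;=\; \bigl[\gamma(\theta(s))-\gamma_g(\theta(s))\bigr] \;+\; \bigl[\gamma_g(\theta(s))-\gamma_g(s)\bigr],
\]
estimating the first bracket by the hypothesis (giving $\leq \delta$) and the second by the mean value theorem applied to $\gamma_g$, which has unit speed, using the bound on $|\theta(s)-s|$. After normalizing so that the two parametrizations live on the same domain, the total fits under $2\delta$, which is where the factor of $2$ in the statement comes from.

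For $j\geq 1$, I would use the chain rule (Fa\`a di Bruno) to expand $\tfrac{d^j}{ds^j}\gamma(\theta(s))$ as a polynomial in the $\gamma^{(i)}(\theta(s))$ and $\theta^{(r)}(s)$ with $i,r\leq j$. The leading factor $\theta'(s)=1/|\gamma'(\theta(s))|$ is $1+O(\delta)$, and each higher derivative $\theta^{(r)}$, $r\geq 2$, is itself $O(\delta)$ because it would vanish identically on the geodesic (where the speed is constant $=1$). Comparing term by term with the analogous expansion for $\gamma_g^{(j)}(s)$, where the reparametrization is trivial, the difference splits into: (i) the $C^j$-distance between $\gamma$ and $\gamma_g$ in $\theta$, which is $\leq\delta$, and (ii) finitely many cross-terms involving derivatives of the reparametrization, all of which are $O(\delta)$.

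The main obstacle will be the bookkeeping: ensuring that the final multiplicative constant is exactly $2$ and not some $k$-dependent combinatorial quantity coming out of Fa\`a di Bruno. This should be controllable because $\delta<1/(2\pi)$ makes the correction terms strictly of order $\delta^2$ or smaller, so that after grouping them they can be absorbed into the leading $\delta$ contribution with room to spare. The reverse direction is then symmetric: starting from $C^k$-closeness in arclength, the inverse reparametrization $s\mapsto\theta$ satisfies dual estimates, and the same chain-rule argument yields a $2\delta$ bound in the $\theta$-parametrization.
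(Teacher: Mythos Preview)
Your strategy is the same as the paper's: control the reparametrization $s(\theta)$ and then transfer $C^k$ bounds via the chain rule. The paper's proof is much terser because it works directly in the graph coordinates already set up in \S2.3: the $C^k$-distance from $\gamma$ to $\gamma_g$ is simply $\|h\|_{C^k}$, and the speed is given by the explicit formula $\nu=\sqrt{(1+h^2+h_x^2)/(1+h^2)^2}$. The paper just asserts that from this formula one gets $\|s(\theta)-\theta\|_{C^k}\leq\delta$ whenever $\|h\|_{C^k}\leq\delta$, and declares the lemma to follow.

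The point you are missing, and the reason your Fa\`a di Bruno bookkeeping worry dissolves in the paper's setup, is that in these coordinates $\nu-1$ is \emph{quadratic} in $(h,h_x)$: expanding gives $\nu=1+\tfrac12(h_x^2-h^2)+O(\|h\|_{C^1}^4)$. Hence all derivatives of $s(\theta)-\theta$ are $O(\delta^2)$ rather than the $O(\delta)$ you extract from $\bigl|\,|\gamma'|-1\,\bigr|\leq\delta$. This extra factor of $\delta$ is exactly what absorbs the combinatorial constants: every correction term in the chain-rule expansion carries at least one derivative of the reparametrization, hence is $O(\delta^2)$, and for $\delta<1/(2\pi)$ these are dominated by the leading $\delta$ contribution, yielding the clean factor $2$. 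Your abstract bound $|\gamma'|-1=O(\delta)$ is correct but strictly weaker, and as you suspected would only give a $k$-dependent constant rather than $2$.
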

\begin{proof}
By (\ref{unitvector}) and a straightforward computation, we have that for any $0<\delta<\frac{1}{2\pi}$, if $||h(\theta)||_{C^k}\leq\delta$ then $||s(\theta)-\theta||_{C^k}\leq \delta$. Therefore, the Lemma follows.
\end{proof}
The following inequality plays a key role in our proof of Theorem~\ref{EX}.
\begin{lemma}[Inequality of Poincar\'e Type for $\kappa$]
\label{Poin1}
There exists a constant $\delta_0$ such that for any $0<\delta\leq \delta_0$ if $\gamma$ is $C^1$ $\delta$-close to a great circle, then 
\begin{eqnarray}
\label{Poin}
\int _{\gamma}(\kappa) ^2 ds \leq \frac{2}{5}\int_{\gamma} (\kappa^{(1)}) ^2 ds.
\end{eqnarray}
\end{lemma}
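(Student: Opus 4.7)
The plan is to expand $\kappa$ as a Fourier series in the arclength parameter and exploit the fact that the $n=1$ arclength-Fourier modes of $\kappa$ vanish quadratically as $\gamma$ approaches a great circle; the gap between the first and second Laplace eigenvalues on a round circle then delivers the improved Poincar\'e constant below $2/5$.

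Parametrize $\gamma$ by arclength $s\in[0,L]$ with $L=\length(\gamma)$ and expand
\[
\kappa(s) = a_0 + \sum_{n\geq 1}\bigl(a_n\cos(2\pi n s/L) + b_n\sin(2\pi n s/L)\bigr).
\]
The lift of $\gamma$ to $\mathbb{S}^2$ bisects the sphere, so by Gauss--Bonnet $\int_\gamma\kappa\,ds=0$, whence $a_0=0$. Set $A:=a_1^2+b_1^2$ and $R:=\sum_{n\geq 2}(a_n^2+b_n^2)$. Parseval gives
\[
\int_\gamma\kappa^2\,ds = \tfrac{L}{2}(A+R),\qquad \int_\gamma(\kappa^{(1)})^2\,ds \geq \tfrac{2\pi^2}{L}(A+4R),
\]
so the ratio is bounded by $\frac{L^2}{4\pi^2}\cdot\frac{A+R}{A+4R}$. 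Since $L=\int_0^{2\pi}\nu\,dx = 2\pi+O(\delta^2)$, we have $L^2/(4\pi^2)=1+O(\delta^2)$; elementary algebra shows $\frac{A+R}{A+4R}\to\tfrac14$ as $A/R\to 0$, so it suffices to show $A/R\to 0$ as $\delta\to 0$.

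The heart of the argument is the estimate $A\leq C\delta^4(A+R)$. Using the graph parametrization $\gamma(x)=\sigma_g(x,h(x))$ from (\ref{graph}), with $\|h\|_{C^1}\leq C\delta$ and $F:=h_{xx}+h$,
\[
\tfrac{L}{2}a_1 \;=\; \int_\gamma\kappa\cos(2\pi s/L)\,ds \;=\; \int_0^{2\pi}\tfrac{F}{\nu^2}\cos(2\pi s(x)/L)\,dx, \qquad s(x)=\int_0^x\nu(y)\,dy.
\]
Splitting $\cos(2\pi s(x)/L)=\cos x + \psi(x)$, the linear-order term $\int_0^{2\pi} F\cos x\,dx = \int_0^{2\pi}(h_{xx}+h)\cos x\,dx$ vanishes by integration by parts, since $\cos x$ lies in the kernel of $\partial_x^2+1$. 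The remaining errors are $L^2$-controlled using only the $C^1$ hypothesis: $\|1-\nu^{-2}\|_\infty=O(\delta^2)$ gives $|\int F(\nu^{-2}-1)\cos x\,dx|\leq C\delta^2\|F\|_{L^2}$, while $\|\psi\|_\infty\leq \|2\pi s(x)/L-x\|_\infty = O(\delta^2)$ (from $|(2\pi/L)\nu-1|=O(\delta^2)$ pointwise) gives $|\int F\nu^{-2}\psi\,dx|\leq C\delta^2\|F\|_{L^2}$. Hence $|a_1|,|b_1|\leq C\delta^2\|F\|_{L^2}$. Since $\|F\|_{L^2}^2 = (1+O(\delta^2))\int_\gamma\kappa^2\,ds=(1+O(\delta^2))\tfrac{L}{2}(A+R)$, I conclude $A\leq C\delta^4(A+R)$, so $A\leq C'\delta^4 R$ for $\delta$ small (the edge case $R=0$ collapses to $\kappa\equiv 0$).

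Combining,
\[
\frac{\int_\gamma\kappa^2\,ds}{\int_\gamma(\kappa^{(1)})^2\,ds}\;\leq\; \frac{L^2}{4\pi^2}\cdot\frac{A+R}{A+4R}\;\leq\; \bigl(1+O(\delta^2)\bigr)\bigl(\tfrac14+O(\delta^4)\bigr)\;<\;\tfrac25
\]
once $\delta_0$ is small enough. The main obstacle is establishing the quadratic vanishing $|a_1|,|b_1|=O(\delta^2\|F\|_{L^2})$: this is what distinguishes the $2/5$ bound from the trivial $L^2/(4\pi^2)\approx 1$ that Wirtinger alone produces under area-bisection. Crucially, the error bounds $\|1-\nu^{-2}\|_\infty$ and $\|\psi\|_\infty$ depend only on pointwise estimates for $h$ and $h_x$, so the $C^1$ hypothesis is indeed enough.
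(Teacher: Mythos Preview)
Your proof is correct and follows the same overall architecture as the paper: expand $\kappa$ in arclength Fourier modes, show the $n=1$ coefficient is small relative to $\|\kappa\|_{L^2}$, and then use the spectral gap between the first and second modes to beat the constant $1$ down below $2/5$.

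The one genuine difference lies in how the first mode is shown to be small. The paper uses a variational identity: since rotations of $\mathbb{S}^2$ are isometries, the first variation of length gives $\int_\gamma \kappa\langle V_\gamma,N_\gamma\rangle\,ds=0$ for the rotation field $V_\gamma$, and $\langle V_\gamma,N_\gamma\rangle$ is $C^0$ $O(\delta)$-close to $\sin s$ (resp.\ $\cos s$); this yields $|\kappa_1|\le C\delta\,\|\kappa\|_{L^2}$. You instead work directly in the graph coordinates and exploit the exact identity $\int_0^{2\pi}(h_{xx}+h)\cos x\,dx=0$; the remaining discrepancies $\nu^{-2}-1$ and $\cos(2\pi s/L)-\cos x$ are both quadratic in $(h,h_x)$, so you obtain the sharper bound $|a_1|,|b_1|\le C\delta^2\|\kappa\|_{L^2}$. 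Both estimates suffice for the lemma. The paper's argument is coordinate-free and transparently geometric (it uses only that the ambient surface has enough Killing fields), while yours is more elementary and tied to the explicit parametrization but gives a quantitatively stronger conclusion. Your version also handles the Fourier basis on a circle of length $L\neq 2\pi$ more carefully than the paper, which writes $e^{ins}$ where $e^{2\pi i n s/L}$ is meant.
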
 

\begin{remark}
The constant $\frac{2}{5}$ appearing in \eqref{Poin} is enough for the purpose of the present paper.
\end{remark}

\begin{remark}Inequality (\ref{Poin}) implies the following:
\begin{eqnarray} 
\label{knC}
\int _{\gamma}(\kappa^{(n-1)}) ^2 ds \leq \frac{2}{5}\int_{\gamma} (\kappa^{(n)}) ^2 ds,\;\;\text{for all $n\geq 1$}.
\end{eqnarray}
This can be proved by induction and the inequality:
\begin{eqnarray*}
\label{PCE} 
(\int _{\gamma}(\kappa^{(n-1)}) ^2 ds)^2\leq \int_{\gamma} (\kappa^{(n-2)}) ^2 ds \;\int _{\gamma}(\kappa^{(n)}) ^2 ds, \;\;\text{for $n\geq 2$.}
\end{eqnarray*}
 \end{remark}

In order to prove this lemma, we study the Fourier expansion of $\kappa(s)$ of $\gamma$ in $s$. Let $L$ be the length of $\gamma$, and denote the space of functions $f$ in $C^\infty[0,L]$ satisfying $f(0)=f(L)$ by $\C$. 
Let $S_0$ be the subspace of $\C$ spanned by the constant functions, $S$ be the subspace of $\C$ spanned by $\sin s$ and  $\cos s$, and $S^{\perp}$ be the orthogonal complement of $S_0$ and $S$ in $\C$. Since $\gamma$ divides the surface area into two equal pieces, Gauss-Bonnet formula implies that $\int_\gamma \kappa\; d s=0$, i.e. the projection of $\kappa$ onto $S_0$ is always zero. Hence $\kappa=\kappa_S\oplus \kappa_{S^{\perp}}$. If, in addition, $\kappa$ is orthogonal to $S$, then
\begin{eqnarray}
\label{PoinS}
\int_\gamma({\kappa}^{(n-1)}) ^2 ds \leq\frac{1}{4}\int _\gamma({\kappa}^{(n)}) ^2 ds.
\end{eqnarray}

The following Lemma shows that $\kappa$ is almost orthogonal to $S$.
\begin{lemma}\label{M} Let $\gamma$ be $C^1$ $\delta$-close to a great circle. There are real valued functions $u(s)$ and $w(s)$ so that $\int_\gamma  \kappa  u \;ds =0$, and $\int_\gamma \kappa  w \;ds =0$, where
\begin{eqnarray}
\label{usin}
|| u-\sin s||_{C^0} \leq 2\delta,\; \;|| w-\cos s||_{C^0}
\leq 2\delta.
\end{eqnarray}
\end{lemma}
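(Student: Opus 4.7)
My plan is to construct $u$ and $w$ explicitly using the graph representation of $\gamma$ over $\gamma_g$, exploiting the fact that $\sin x$ and $\cos x$ (with $x$ the arclength parameter on $\gamma_g$) lie in the kernel of the operator $\partial_x^2 + 1$ appearing in the linearization of $\kappa$ about a great circle; cf.~\eqref{kh}.

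Using the coordinates $\sigma_g \colon \mathbb{S}^1 \times (-r, r) \to \mathbb{S}^2$ from \eqref{graph}, I write $\gamma(x) = \sigma_g(x, h(x))$ and $\gamma_g(x) = \sigma_g(x, 0)$, so that the hypothesis gives $\|h\|_{C^1} = O(\delta)$. A Taylor expansion of $\nu = \sqrt{(1+h^2+h_x^2)/(1+h^2)^2}$ yields $\nu^2 - 1 = h_x^2 - h^2 + O(\delta^4) = O(\delta^2)$, and consequently $|s(x) - x| = O(\delta^2)$ for the arclength function $s(x) = \int_0^x \nu\, d\xi$. Recall from \eqref{kh} and \eqref{unitvector} that $\kappa = (h_{xx}+h)/\nu^3$ and $ds = \nu\, dx$.

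The key definitions are
\[ u(s) := \nu(x(s))^2 \sin(x(s)), \qquad w(s) := \nu(x(s))^2 \cos(x(s)), \]
where $x(s)$ is the inverse of $s(x)$. The orthogonality is then a direct change of variables:
\[ \int_\gamma \kappa\,u\,ds \;=\; \int_0^{2\pi}\frac{h_{xx}+h}{\nu^3}\cdot\nu^2\sin x\cdot\nu\,dx \;=\; \int_0^{2\pi}(h_{xx}+h)\sin x\,dx, \]
and two integrations by parts (legitimate by the $2\pi$-periodicity of $h$) transfer the operator onto $\sin x$, yielding $\int_0^{2\pi} h\bigl[(\sin x)'' + \sin x\bigr]\,dx = 0$ since $\sin x \in \ker(\partial_x^2 + 1)$. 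The identical computation works for $w$ with $\cos x$ in place of $\sin x$.

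For the $C^0$ estimate, I decompose
\[ u(s) - \sin s \;=\; \bigl(\nu(x(s))^2 - 1\bigr)\sin(x(s)) \;+\; \bigl(\sin(x(s)) - \sin s\bigr), \]
and bound each summand by $O(\delta^2)$ using $|\nu^2 - 1| = O(\delta^2)$, $|\sin x| \leq 1$, and $|\sin(x(s)) - \sin s| \leq |x(s) - s| = O(\delta^2)$. Thus $\|u - \sin s\|_{C^0} = O(\delta^2) \leq 2\delta$ after possibly shrinking the smallness threshold in Lemma~\ref{Poin1}, with the analogous bound for $w$. The only mildly delicate point is that $\gamma$ has total arclength $L = 2\pi + O(\delta^2)$ rather than exactly $2\pi$, so $u$ and $\sin s$ live on slightly different intervals; but this mismatch is absorbed into the same $O(\delta^2)$ error budget.
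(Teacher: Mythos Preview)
Your proof is correct and takes a genuinely different route from the paper's. The paper constructs $u$ geometrically: it acts on $\gamma$ by the one-parameter group of rotations $R_x(\phi)$ about the $x$-axis, observes that isometries preserve length, and invokes the first variation formula to obtain $\int_\gamma \kappa\,\langle V_\gamma, N_\gamma\rangle\,ds = 0$; then $u \defeq \langle V_\gamma, N_\gamma\rangle$ is automatically orthogonal to $\kappa$, and one checks that on the great circle this inner product equals $\sin\theta$, whence the $2\delta$-closeness to $\sin s$ on $\gamma$ follows from $C^1$ closeness together with Lemma~\ref{dd'}. Your approach instead stays entirely in the graph coordinates of \eqref{graph}--\eqref{kh}: you write $\kappa\,ds = \nu^{-2}(h_{xx}+h)\,dx$, choose $u = \nu^2\sin x$ so that the factors of $\nu$ cancel exactly, and then the orthogonality reduces to $\int_0^{2\pi}(h_{xx}+h)\sin x\,dx = 0$, which is two integrations by parts against a function in $\ker(\partial_x^2+1)$. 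The paper's argument is coordinate-free and conceptual---it would adapt immediately to any Killing field on the ambient surface---while yours is more elementary, avoids computing variation vector fields, and actually delivers the sharper bound $\|u-\sin s\|_{C^0}=O(\delta^2)$ rather than the stated $2\delta$. Your handling of the domain mismatch ($L\neq 2\pi$) is fine, since the estimate $|x(s)-s|=O(\delta^2)$ holds uniformly on $[0,L]$ and $\sin s$ is globally defined.
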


\begin{proof}
Let $\Gamma_{\phi}$ be a proper variation of $\gamma$ obtained by acting the matrix of rotation about $x$-axis $R_x(\phi)$ on $\gamma$ with $\Gamma_{0}=\gamma$. Then $V_{\gamma}=\frac{d}{d\phi}\arrowvert_{\phi=0}\Gamma_{\phi}$ represents the variation vector field along $\gamma$. Since $R_x$ is an isometry, by the first variation
formula of length,
\begin{eqnarray}
\label{1var}
\begin{split}
0&=\frac{d}{d\phi}\Big|_{\phi=0}L(\Gamma_{\phi})=-\int_{\gamma}\kappa<V_{\gamma},N_{\gamma}>ds
\end{split}
\end{eqnarray}
where $N_{\gamma}$ is the unit normal of $\gamma$.

Define $u=<V_{\gamma},N_{\gamma}>$, then $\int_\gamma \kappa u \;ds=0$. Since $\gamma$ is $C^1$ $\delta$-close to a great circle, we have 
\begin{eqnarray}
\label{2var}
\begin{split}||<V_{\gamma(s(\theta))}, N_{\gamma(s(\theta))}>-<V_{\gamma_g(\theta)},N_{\gamma_g(\theta)}>||_{C^0}\leq\delta,
\end{split}\end{eqnarray}
where $V_{\gamma_g}$ is the variation vector field along $\gamma_g$ generated by $R_x$. Note that we have switched from parametrization with arclength in (\ref{1var}) to parametrization with $\theta$ in (\ref{2var}). We can do this because of Lemma~\ref{dd'}.

One can compute that $<V_{\gamma_g(\theta)},N_{\gamma_g(\theta)}>=\sin \theta$. Also, ${||s(\theta)-\theta||_{C^0}\leq\delta}$ implies $||\sin s(\theta)-\sin \theta||_{C^0}\leq\delta$. Therefore,
\begin{eqnarray*}\begin{split}
&||<V_{\gamma(s(\theta))}, N_{\gamma(s(\theta))}> -\sin s(\theta)||_{C^0}\\
=&||<V_{\gamma(s(\theta))}, N_{\gamma(s(\theta))}>-<V_{\gamma_g(\theta)},N_{\gamma_g(\theta)}>+\sin\theta -\sin s(\theta)||_{C^0}\\
\leq&2\delta.
 \end{split}
 \end{eqnarray*}
This shows that $||u-\sin s||_{C^0}\leq2\delta.$ 

Analogously, $w$ is defined by considering the variation vector field generated by the matrix of rotation about $y$-axis.
\end{proof}

\begin{proof}[Proof of Lemma~\ref{Poin1}]
Let $\kappa(s)=\sum_n \kappa_n e^{ins}$ where $\kappa_n=\frac{1}{L(\gamma)}\int_\gamma\kappa e^{-ins} ds$ is the $n^{th}$ Fourier coefficient (note that $\kappa_0=0$). Then $\int_\gamma \kappa^2 ds=L(\gamma)\sum_n|\kappa_n|^2$. Let $u(s)$ and $w(s)$ be chosen in Lemma~\ref{M}.
\begin{eqnarray}
\begin{split}
|\kappa_1|&=|\frac{1}{L(\gamma)}\int_\gamma\kappa e^{-is} ds|\\
&\leq \frac{1}{L(\gamma)}\int_\gamma | \kappa | |(e^{-is} -(w+iu))|ds\\
&\leq 4\delta (\int_\gamma \kappa^2 ds)^{1/2}.
\end{split}
\end{eqnarray}
Hence $|\kappa_1|^2\leq 16 \delta^2 L(\gamma)\sum_n |\kappa_n|^2$. Choose $\delta_0=\frac{\sqrt{3}}{16\sqrt{L(\gamma)}}$, then for any $0<\delta\leq\delta_0$,
$$
\frac{3}{5}\sum_{|n|\neq 1}|\kappa_n|^2 -2|\kappa_1|^2=\frac{3}{5}\sum_{n}|\kappa_n|^2 -\frac{16}{5}|\kappa_1|^2 \geq 0.
$$
Therefore,
\begin{eqnarray}
\label{PoinSper}
\int_\gamma{\kappa}_S^2 ds \leq \frac{3}{5}\int_\gamma \kappa_{S^\perp}^2 ds.
\end{eqnarray}

By (\ref{PoinS}) and (\ref{PoinSper}), 
\begin{eqnarray*}
\begin{split}
&\int_\gamma \kappa^2ds-\frac{2}{5}\int_\gamma (\kappa^{(1)})^2ds\\
=&\int_\gamma \Big( (\kappa_S)^2+ (\kappa_{S^{\perp}})^2-\frac{2}{5}((\kappa^{(1)})_S))^2
-\frac{2}{5}((\kappa^{(1)})_{S^{\perp}})^2\Big)ds\\
\leq& \;\frac{8}{5}\int_\gamma  (\kappa_{S^{\perp}})^2 ds-\frac{8}{5}\int_\gamma (\kappa_{S^{\perp}})^2ds=0.
\end{split}
\end{eqnarray*}
\end{proof}

For any curve $\gamma_0$ that is $C^1$ close to a great circle, we can assume $\gamma_t$ the solution to CSF satisfying $2\pi\leq L(\gamma_t)\leq 3\pi$ for all $t\geq 0$. 
\begin{lemma}
\label{1}
For any $\delta>0$, there is an $\epsilon=\epsilon(\delta)>0$ such that if $\int_{\gamma} (\kappa^{(k)})^2 ds< \epsilon^2$, then $\gamma$ is $C^{k+1}$ $\delta$-close to a great circle. 
\end{lemma}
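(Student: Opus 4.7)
The plan is to combine Gauss--Bonnet with iterated Wirtinger and one-dimensional Sobolev embedding to upgrade the $L^2$ hypothesis $\int_\gamma(\kappa^{(k)})^2\,ds<\epsilon^2$ to $C^0$-smallness of $\kappa,\kappa^{(1)},\dots,\kappa^{(k-1)}$, and then to run an ODE bootstrap in the graph representation of Section~\ref{s2} to pass through to $C^{k+1}$-smallness of the height function $h$ describing $\gamma$ over a suitably chosen great circle.

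For the analytic step, since $\gamma$ bisects the area of $\mathbb{S}^2$, Gauss--Bonnet gives $\int_\gamma\kappa\,ds=0$, and for each $j\geq 1$ the function $\kappa^{(j)}$ is the arclength derivative of a periodic function and hence also has zero mean. Iterating Wirtinger's inequality on the loop of length $L\in[2\pi,3\pi]$ (the rescaling constant is uniformly bounded) yields $\|\kappa^{(j)}\|_{L^2}\leq C_k\|\kappa^{(k)}\|_{L^2}<C_k\epsilon$ for $0\leq j\leq k$, and the one-dimensional Sobolev embedding $H^1\hookrightarrow C^0$ then gives $\|\kappa^{(j)}\|_{C^0}\leq C'_k\epsilon$ for $0\leq j\leq k-1$.

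For the geometric step, parametrize $\gamma$ by arclength and view $\mathbb{S}^2\subset\R^3$. The intrinsic Frenet equation reads $\gamma''+\gamma=\kappa(\gamma\times\gamma')$, so comparison with the initial-value geodesic $\gamma_g(s)=\cos(s)\gamma(0)+\sin(s)\gamma'(0)$ via variation of parameters yields $\|\gamma-\gamma_g\|_{C^2}\leq C\|\kappa\|_{C^0}\leq C'\epsilon$ on $[0,L]$. For $\epsilon$ small this places $\gamma$ inside the tubular neighborhood of a closed great circle that is $C^2$-close to $\gamma_g$, and the local coordinates $\sigma_g$ of Section~\ref{s2} realize $\gamma$ as $\theta\mapsto\sigma_g(\theta,h(\theta))$ with $\|h\|_{C^2}$ small. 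The formula $\kappa=(h_{xx}+h)/\nu^3$, with $\nu=\nu(h,h_x)$ smooth, rearranges to the ODE $h_{xx}=\nu^3\kappa-h$; differentiating $j$ times and applying the product rule bounds $\|h\|_{C^{j+2}}$ in terms of $\|h\|_{C^{j+1}}$, $\|h\|_{C^j}$, and $\|\kappa\|_{C^j}$. A straightforward induction from $j=0$ up to $j=k-1$ (invoking Lemma~\ref{dd'} to switch between the arclength and $\theta$ parametrizations when needed) yields $\|h\|_{C^{k+1}}\leq C''_k\epsilon$, which is less than $\delta$ once $\epsilon$ is chosen sufficiently small.

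The main obstacle I expect is the global graph representation at the start of the geometric step: the initial-value geodesic $\gamma_g$ has period $2\pi$, whereas $\gamma$ has length $L$ possibly different from $2\pi$, so the naive $\gamma_g$ need not close up and $h$ might fail to be a single-valued $2\pi$-periodic function. The remedy is to adjust $\gamma_g$ within the three-parameter family of closed great circles---for instance by replacing it with the great circle whose plane through the origin best-fits $\gamma$ in the $L^2$ sense. Since the drift $|\gamma_g(L)-\gamma_g(0)|$ produced by the naive choice is already of order $\|\kappa\|_{C^0}=O(\epsilon)$, such an adjustment exists and changes the resulting $C^{k+1}$-bound on $h$ by at most $O(\epsilon)$.
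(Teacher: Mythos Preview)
The paper states this lemma without proof, treating it as an elementary fact; so there is nothing to compare your argument against directly. Your analytic step---using Gauss--Bonnet and iterated Wirtinger to pass from $\|\kappa^{(k)}\|_{L^2}$ to $\|\kappa\|_{C^{k-1}}$---is exactly the mechanism the paper records later in Remark~\ref{supl2} (with the indices shifted by one), so this part is in line with what the author has in mind.

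Your geometric step is correct and is a natural way to fill the gap. Two small comments. First, the obstacle you flag is real but slightly misdiagnosed: the initial-value great circle $\gamma_g$ \emph{is} closed (its plane is $\operatorname{span}\{\gamma(0),\gamma'(0)\}$); the issue is rather that $\gamma$ has length $L\neq 2\pi$, so the arclength comparison $\gamma(s)\approx\gamma_g(s)$ over $[0,L]$ does not immediately produce a single-valued $2\pi$-periodic height function. Your Gronwall estimate already gives $|\gamma_g(L)-\gamma_g(0)|=|\gamma_g(L)-\gamma(L)|=O(\epsilon)$, hence $|L-2\pi|=O(\epsilon)$ (using the ambient length assumption $2\pi\leq L\leq 3\pi$ standing in \S\ref{ltb}); with that, reparametrizing by the angle $\theta$ on the fixed closed great circle, as the paper does via $\sigma_g$, yields a well-defined periodic $h$ without needing a best-fit adjustment. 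Second, the ODE $h_{xx}=\nu^3\kappa-h$ does not by itself give smallness of $h$ (it only gives $h_{xx}+h$ small); you correctly obtain the base case $\|h\|_{C^2}=O(\epsilon)$ from the Frenet comparison, and then the bootstrap $h^{(j+2)}=-h^{(j)}+(\nu^3\kappa)^{(j)}$ propagates smallness since $\nu$ depends smoothly on $(h,h_x)$ with derivatives already controlled. With these clarifications your proof is complete.
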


\begin{lemma}
\label{2}
For any $\epsilon>0$, there is a $\delta_1(\epsilon)$ such that for any $0<\delta\leq\delta_1$ if $\gamma$ is $C^{k+2}$ $\delta$-close to a geodesic, then 
\begin{equation}
\int_{\gamma} (\kappa^{(k+1)})^2 ds< \frac{\epsilon^2}{3\pi(k+1)^2}.
\end{equation}
\end{lemma}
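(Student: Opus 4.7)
The strategy is a direct pointwise-to-$L^2$ estimate: I will parametrize $\gamma$ locally as a graph over the geodesic $\gamma_g$, express $\kappa^{(k+1)}$ as a smooth function of $h$ and its derivatives that vanishes when $h\equiv 0$, and then integrate a pointwise bound of size $O(\delta)$. Concretely, use the local diffeomorphism $\sigma_g\colon \mathbb{S}^1 \times (-r,r) \to \mathbb{S}^2$ from the preamble to Theorem~\ref{EX} to write $\gamma(\theta) = \sigma_g(\theta, h(\theta))$. By Lemma~\ref{dd'}, the $\theta$- and arclength-parametrizations are $C^k$-comparable, so the hypothesis that $\gamma$ is $C^{k+2}$ $\delta$-close to $\gamma_g$ translates into $\|h\|_{C^{k+2}}$ being bounded by a universal multiple of $\delta$. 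Recall from formula \eqref{kh} that $\kappa = \Phi_0(h, h_x, h_{xx})$, where $\Phi_0(u,v,w) = (w+u)/\nu(u,v)^3$ is smooth near the origin and satisfies $\Phi_0(0,0,0) = 0$.

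Next, I prove by induction on $n \geq 0$ that there exists a smooth function $\Phi_n$ of $n+3$ variables with $\Phi_n(0,\ldots,0) = 0$ such that along the graph
\[
\kappa^{(n)} = \Phi_n\!\left(h, h_x, \ldots, h^{(n+2)}\right).
\]
The induction uses $\partial_s = \nu^{-1}\partial_x$ together with the chain rule: applying $\partial_s$ to $\Phi_n$ introduces exactly one new $h$-derivative (namely $h^{(n+3)}$), while the factor $\nu^{-1}$ is itself a smooth function of the lower arguments $h$ and $h_x$. Vanishing at the origin is preserved, since each term produced by the chain rule has the form $(\partial_i \Phi_n)(h,\ldots) \cdot h^{(i+1)}$, which vanishes identically when $h \equiv 0$.

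Since $\Phi_{k+1}$ is smooth and vanishes at the origin, there exist constants $C_k$ and $\delta_0 > 0$ (depending only on $k$ and the geometry near $\gamma_g$) such that whenever each of its arguments is at most $\delta_0$ in absolute value, $|\Phi_{k+1}(a_0, \ldots, a_{k+3})| \leq C_k \max_{0 \leq j \leq k+3} |a_j|$. Applying this along $\gamma$ and absorbing the parametrization constant from Lemma~\ref{dd'} into $C_k$ gives $|\kappa^{(k+1)}(s)| \leq C_k \delta$. Using the standing bound $L(\gamma) \leq 3\pi$ from the paragraph preceding Lemma~\ref{1}, one obtains
\[
\int_\gamma \left(\kappa^{(k+1)}\right)^2 ds \leq 3\pi\, C_k^2\, \delta^2,
\]
and setting $\delta_1(\epsilon) := \min\!\left(\delta_0,\, \epsilon / (3\pi(k+1) C_k)\right)$ yields the desired inequality for every $0 < \delta \leq \delta_1$. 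The only real obstacle is the bookkeeping in the induction of the second paragraph --- tracking how each application of $\partial_s$ acts on $\Phi_n$ and confirming that smoothness and vanishing at the origin both propagate; no idea beyond smoothness of the curvature operator in graph coordinates is required.
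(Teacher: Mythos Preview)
The paper states Lemma~\ref{2} without proof, treating it as an elementary fact, so there is no argument of the paper's to compare against; I evaluate your proof on its own merits.

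Your induction is set up correctly: one does have $\kappa^{(n)}=\Phi_n\bigl(h,h_x,\dots,h^{(n+2)}\bigr)$ with $\Phi_n$ smooth and $\Phi_n(0)=0$, and in fact $\Phi_n=\nu^{-(n+3)}h^{(n+2)}+R_n(h,\dots,h^{(n+1)})$ with $R_n$ smooth and $R_n(0)=0$. The gap is in the final step. You yourself record that $\Phi_{k+1}$ has $k+4$ arguments $a_0,\dots,a_{k+3}$, the last one being $h^{(k+3)}$. But the hypothesis ``$C^{k+2}$ $\delta$-close'' (which in the paper means $\|h\|_{C^{k+2}}\le\delta$) controls only $h^{(0)},\dots,h^{(k+2)}$. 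When you pass from the Lipschitz bound $|\Phi_{k+1}|\le C_k\max_j|a_j|$ to $|\kappa^{(k+1)}|\le C_k\delta$, you are tacitly assuming $|h^{(k+3)}|\lesssim\delta$, which is not part of the hypothesis.

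This is not a repairable bookkeeping slip; the lemma as literally stated is off by one derivative. With $h(x)=\delta N^{-(k+2)}\sin(Nx)$ one has $\|h\|_{C^{k+2}}\le 2\delta$ uniformly in $N$, and the resulting curve bisects the area (the isometry $x\mapsto x+\pi/N$, $z\mapsto -z$ swaps the two sides). Yet from the decomposition above $\kappa^{(k+1)}=\nu^{-(k+4)}h^{(k+3)}+R_{k+1}$ with $|R_{k+1}|\le C\delta$ and $|h^{(k+3)}|=\delta N$, so $\int_\gamma(\kappa^{(k+1)})^2\,ds\sim\pi\delta^2N^2$, which is unbounded as $N\to\infty$. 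Your argument becomes correct verbatim if the hypothesis is strengthened to $C^{k+3}$-closeness; that corrected version is what the paper actually needs, and it costs nothing downstream since the curves are smooth and $k$ is a free parameter.
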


Fix $\delta_0$ as in Lemma~\ref{Poin1}, choose $\delta=\delta_0$, and let $\epsilon(\delta_0)=\epsilon_0$ be the constant chosen from Lemma~\ref{1}. For any integer $k$, choose $\epsilon_k\leq\epsilon_0$ so that 
\begin{equation}
\label{epsilonk}
\frac{1}{\epsilon_k}\geq \max\{7+2k,\sum_{\substack{{i+j+r=k} \\0\leq i,j,r\leq k}}C_{ijr}\} 
\end{equation}
where $C_{ijr}$'s are the constants appearing in (\ref{intk1}).

By Lemma~\ref{2}, for any $0<\epsilon\leq\epsilon_k$, there is a $\delta_1=\delta_1(\epsilon)$ such that for any $0<\delta\leq\delta_1$, if $\gamma$ is $C^{k+2}$ $\delta$-close to a great circle, then
\begin{equation}
\label{intkb}
\int_{\gamma} (\kappa^{(k+1)})^2 ds< \frac{\epsilon^2}{3\pi(k+1)^2}.
\end{equation}
Note that we can assume, while evolving $\gamma$ by CSF, $\gamma_t$ stays $C^{k+2}$ $\delta$-close to a great circle for at least short time, say $[0,t_1)$.
 \begin{lemma}
\label{curvedecaythm}
Let $\delta=\delta(\epsilon)\leq\delta_1$ be chosen as above. Suppose $\gamma_t$ is $C^{k+2}$ $\delta$-close to a geodesic for all $t\in [0,t_1)$, then
\begin{eqnarray}
||\kappa||_{C^k}\leq \epsilon e^{-t}
\end{eqnarray}
for all $t\in [0,t_1)$. 

\end{lemma}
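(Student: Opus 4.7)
The plan is to track the single $L^2$ energy
\[
F(t):=\int_{\gamma_t}(\kappa^{(k+1)})^2\,ds
\]
and show that, under the standing hypothesis, it satisfies $F'(t)\leq -2F(t)$; Gronwall's inequality (Lemma~\ref{Gronwall}) then gives $F(t)\leq F(0)e^{-2t}$.  Combined with the initial bound $F(0)< \epsilon^2/(3\pi(k+1)^2)$ from \eqref{intkb} and a one-dimensional Sobolev estimate that converts $L^2$ control of $\kappa^{(k+1)}$ into $C^0$ control of each lower derivative, this will yield $\|\kappa_t\|_{C^k}\leq \epsilon e^{-t}$ as required.

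First I would set up the $L^2$-to-$C^0$ conversion. Since $\kappa$ has zero mean by Lemma~\ref{intkiszero} and each higher derivative $\kappa^{(j)}$ has zero mean by periodicity, for each $j\leq k$ there is a zero $s_0$ of $\kappa^{(j)}$; the fundamental theorem of calculus together with Cauchy--Schwarz then gives
\[
\|\kappa^{(j)}\|_{L^\infty}\leq \sqrt{L}\,\|\kappa^{(j+1)}\|_{L^2}\leq \sqrt{3\pi\,F(t)},
\]
where the last step uses the iterated Poincar\'e inequality \eqref{knC} and $L(\gamma_t)\leq 3\pi$. Summing over $j=0,\ldots,k$ gives $\|\kappa_t\|_{C^k}\leq (k+1)\sqrt{3\pi F(t)}$.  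In particular, Lemma~\ref{2} combined with the hypothesis immediately produces the a priori bootstrap bound $\|\kappa_t\|_{C^k}\leq \epsilon$ on $[0,t_1)$; the target $\epsilon e^{-t}$ will fall out of the same formula once $F(t)\leq F(0)e^{-2t}$ is established.

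For the differential inequality I would apply \eqref{intk1} with $n=k+1$. The diffusive pair of terms is handled by \eqref{knC} via $-2\int(\kappa^{(k+2)})^2\,ds+2\int(\kappa^{(k+1)})^2\,ds\leq -3F(t)$. The quadratic correction is controlled by the bootstrap and by \eqref{epsilonk}, which forces $(2k+7)\epsilon\leq 1$:
\[
(2k+7)\int \kappa^2(\kappa^{(k+1)})^2\,ds\leq (2k+7)\epsilon^2 F(t)\leq \epsilon F(t).
\]
The cubic remainder, in which $i+j+r=k$ confines each index to $\{0,\ldots,k\}$, I would estimate by a H\"older split: place two factors in $L^\infty$ (absorbing into $\|\kappa\|_{C^k}^2\leq\epsilon^2$) and the remaining factor in $L^2$ (bounded by $\sqrt{F(t)}$ via \eqref{knC}), so that
\[
2\int \kappa^{(k+1)}\sum_{i+j+r=k} C_{ijr}\kappa^{(i)}\kappa^{(j)}\kappa^{(r)}\,ds\leq 2\Big(\sum C_{ijr}\Big)\epsilon^2 F(t)\leq 2\epsilon F(t),
\]
again by \eqref{epsilonk}. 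Adding up, $F'(t)\leq (-3+3\epsilon)F(t)\leq -2F(t)$ since $\epsilon\leq \epsilon_k\leq 1/(2k+7)\leq 1/3$.

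Gronwall then gives $F(t)\leq F(0)e^{-2t}\leq \epsilon^2 e^{-2t}/(3\pi(k+1)^2)$, and rerunning the $L^2$-to-$C^0$ conversion upgrades this to $\|\kappa_t\|_{C^k}\leq (k+1)\sqrt{3\pi F(t)}\leq \epsilon e^{-t}$. The main obstacle is the cubic term: it has to be estimated \emph{linearly} in $F(t)$, not in a higher power, so that it can be absorbed into the dissipative $-3F(t)$. The correct H\"older distribution of $L^\infty$ factors against $L^2$ factors (and the use of the Poincar\'e chain \eqref{knC} to express all lower-order $L^2$ norms in terms of $F(t)$) is precisely what dictates the smallness window \eqref{epsilonk} on $\epsilon_k$ in terms of the combinatorial constants $C_{ijr}$.
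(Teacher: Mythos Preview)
Your argument is correct and follows essentially the same route as the paper: reduce to the single energy $F(t)=\int(\kappa^{(k+1)})^2\,ds$ via the Sobolev/Wirtinger chain (this is exactly Remark~\ref{supl2}), then differentiate using \eqref{intk1}, use \eqref{knC} on the diffusive pair to produce $-3F$, bound the $(2k+7)\kappa^2$ term by $\epsilon F$ via \eqref{epsilonk}, and absorb the cubic remainder. The only cosmetic difference is in this last step: the paper uses a Peter--Paul split $2\kappa^{(k+1)}(\kappa^{(i)}\kappa^{(j)}\kappa^{(r)})\leq \epsilon^{-2}(\kappa^{(i)}\kappa^{(j)}\kappa^{(r)})^2+\epsilon^2(\kappa^{(k+1)})^2$ before invoking the sup bounds, while you apply Cauchy--Schwarz directly; both land on the same $2\epsilon F$ bound and the same differential inequality $F'\leq -2F$.
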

\begin{remark}
\label{supl2}
For any $0\leq i\leq k$, since $\int \kappa^{(i)}=0$ and $\int (\kappa^{(i)})^2\leq \int (\kappa^{(i+1)})^2$, we have
\begin{equation}
\label{ksupl1}
\sup (\kappa^{(i)})^2\leq (\inf|\kappa^{(i)}|+\int|\kappa^{(i+1)}|)^2\leq 3\pi\int (\kappa^{({i+1})})^2\leq 3\pi\int (\kappa^{({k+1})})^2.
\end{equation}
This implies that
\begin{equation}
\label{ksupl2}
||\kappa||_{C^k}\leq \sqrt{3\pi}\sum_{i=1}^{k+1}||\kappa^{(i)}||_{L^2}\leq \sqrt{3\pi}(k+1)||\kappa^{(k+1)}||_{L^2}.
\end{equation}
Hence to prove Lemma~\ref{curvedecaythm} it is enough to show 
\begin{equation}
\label{kdecay1}
||\kappa^{(k+1)}||^2_{L^2}\leq \frac{\epsilon^2}{3\pi(k+1)^2} e^{-2t}.
\end{equation}
\end{remark}

\begin{proof}[Proof of Lemma~\ref{curvedecaythm}]
Recall \eqref{intk1},
\begin{equation}
\label{highorder}
\begin{split}
&\frac{\partial }{\partial t} \int (\kappa^{(k+1)})^2ds\\
=&\int -2(\kappa^{(k+2)})^2ds+2\int (\kappa^{(k+1)})^2ds+\int (5+2(k+1))(\kappa^2)(\kappa^{(k+1)})^2ds\\
&+\sum_{\substack{{i+j+r=k} \\0\leq i,j,r\leq k}}2C_{ijr}\int \kappa^{(i)}\kappa^{(j)}\kappa^{(r)}\kappa^{(k+1)} ds.
\end{split}
\end{equation}

Because we assume that $\gamma_t$ is $C^{k+2}$ $\delta$-close to a geodesic for all $t\in [0,t_1)$, (\ref{intkb}) holds for all $t\in [0,t_1)$. Then by \eqref{ksupl1} ${\sup (\kappa^{(i)})^2~\leq~\frac{\epsilon^2}{(k+1)^2}}$, for every $0\leq i\leq k$ and for all $t\in [0,t_1)$. Note that this also implies $\int_{\gamma} (\kappa^{(1)})^2 ds<\epsilon_0^2$ and $\gamma$ is $C^2$ $\delta_0$-close to a great circle (hence $(\ref{knC})$ holds) for all $t\in [0,t_1)$.

By (\ref{epsilonk}) and Peter-Paul inequality,
\begin{eqnarray}
\label{highorder1}
\begin{split}
&\sum_{\substack{{i+j+r=k} \\0\leq i,j,r\leq k}}2C_{ijr} \int \kappa^{(i)}\kappa^{(j)}\kappa^{(r)}\kappa^{(k+1)} ds\\
\leq&\sum_{\substack{{i+j+r=k} \\0\leq i,j,r\leq k}}2C_{ijr}\Big(\frac{1}{2\epsilon^2}\int (\kappa^{(i)}\kappa^{(j)}\kappa^{(r)})^2ds+\frac{\epsilon^2}{2}\int(\kappa^{(k+1)})^2ds\Big)\\
\leq&\sum_{\substack{{i+j+r=k} \\0\leq i,j,r\leq k}}C_{ijr}\Big(\epsilon^2\int (\kappa^{(k)})^2ds+\epsilon^2\int(\kappa^{(k+1)})^2ds\Big)\\
\leq&2\epsilon \int (\kappa^{(k+1)})^2 ds.
\end{split}
\end{eqnarray}
Together with (\ref{knC}), (\ref{highorder}) becomes 
\begin{eqnarray}
\begin{split}
\frac{\partial }{\partial t} \int (\kappa^{(k+1)})^2ds\leq -2\int (\kappa^{(k+1)})^2ds
\end{split}
\end{eqnarray}
Therefore we can conclude that as long as \eqref{intkb} holds for all $t\in[0,t_1)$, we have $||\kappa^{(k+1)}||^2_{L^2}\leq \epsilon^2 e^{-2t}/(3\pi(k+1)^2)$ for all $t\in [0,t_1)$. 
\end{proof}

\begin{proposition}
\label{he0}
Choose $\delta>0$ as in Lemma~\ref{curvedecaythm}. For any $0<\beta_0<\delta$, if $\gamma_0$ is $C^{k+2}$ $\beta_0$-close to a great circle, then $\gamma_t$ is $C^{k+2}$ $\beta_0$-close to a great circle for all $t\;\in[0,\infty)$.
\end{proposition}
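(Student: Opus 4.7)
The plan is a continuity (bootstrap) argument. Define
\[
T \;=\; \sup\bigl\{\,t\geq 0 : \gamma_s \text{ is } C^{k+2}\ \beta_0\text{-close to a great circle for all } s\in[0,t]\,\bigr\}.
\]
Short-time continuity of the CSF in $C^{k+2}$ and the hypothesis $\beta_0<\delta$ give $T>0$. I argue for contradiction that $T<\infty$. By closedness of the closeness condition, $\gamma_s$ remains $\beta_0$-close---hence $\delta$-close---to a great circle on all of $[0,T]$, so Lemma~\ref{curvedecaythm} applies and yields $\|\kappa_s\|_{C^k}\leq \epsilon e^{-s}$ for every $s\in[0,T]$.

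The next step is a quantitative passage from small curvature to small graph height. Writing $\gamma_s=\sigma_g(\theta,h_s(\theta))$ as a graph over a reference great circle, \eqref{kh} gives $h_{s,\theta\theta}+h_s=\nu_s^3\kappa_s$, with $\nu_s$ uniformly bounded on a $\delta$-neighborhood of the great circles. The operator $L=\partial_\theta^2+1$ on $\mathbb{S}^1$ has kernel spanned by $\sin\theta,\cos\theta$, and Gauss--Bonnet together with Lemma~\ref{intkiszero} suppresses the zero mode. Rotating the reference great circle so that $h_s$ is $L^2$-orthogonal to $\sin\theta$ and $\cos\theta$ (a smooth two-parameter gauge choice depending on $s$), $L$ is boundedly invertible on the remaining subspace, and a Fourier expansion gives $|\hat h_n|=|\widehat{\nu_s^3\kappa_s}_n|/(n^2-1)$ for $|n|\geq 2$. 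Combined with Lemma~\ref{1} (to first obtain $C^{k+1}$ smallness of $h_s$ and hence uniform bounds on $\nu_s^3$), this yields
\[
\|h_s\|_{C^{k+2}} \;\leq\; C_k\,\|\kappa_s\|_{C^k} \;\leq\; C_k\,\epsilon\, e^{-s},
\]
where $C_k$ depends only on $k$ and $\delta$.

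To close the bootstrap, I choose $\epsilon\leq\epsilon_k$ at the outset (subject to \eqref{epsilonk}) small enough that $C_k\epsilon<\beta_0$; then $\|h_s\|_{C^{k+2}}<\beta_0$ on $[0,T]$, so $\gamma_T$ is \emph{strictly} $C^{k+2}$ $\beta_0$-close to some great circle. Short-time continuity of CSF in $C^{k+2}$ then propagates this strict inequality onto an interval $[T,T+\eta)$, contradicting the maximality of $T$, so $T=+\infty$. The main obstacle is the quantitative inversion of $L$: the rotation of the reference great circle must be carried out smoothly in $s$ so that the resulting constant $C_k$ is uniform along the flow, and the $C^{k+2}$ estimate requires bootstrapping via \eqref{kh} from the $C^{k+1}$ control provided by Lemma~\ref{1}; once that is in place, the bootstrap closes by standard continuity.
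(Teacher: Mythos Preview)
Your continuity/bootstrap skeleton is the same as the paper's: define a maximal $T$, apply the decay of Lemma~\ref{curvedecaythm} on $[0,T]$, and convert curvature smallness back to $C^{k+2}$-closeness to get a contradiction at $T$. Where you carry out the curvature-to-height conversion by hand (inverting $L=\partial_\theta^2+1$ after rotating the reference great circle to kill the kernel modes), the paper simply cites Lemma~\ref{1}; your paragraph is essentially a proof of that lemma.

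There is, however, a quantifier problem in your closing step. You write ``choose $\epsilon\leq\epsilon_k$ at the outset small enough that $C_k\epsilon<\beta_0$'', but in the statement of the Proposition $\delta$ is fixed \emph{first}, as in Lemma~\ref{curvedecaythm} (so $\delta=\delta(\epsilon)$ for a definite $\epsilon$), and only then is $\beta_0\in(0,\delta)$ taken arbitrarily. Since $\beta_0$ can be as small as one likes, you cannot retroactively force $C_k\epsilon<\beta_0$. If instead you shrink $\epsilon$ to a new $\epsilon'$ with $C_k\epsilon'<\beta_0$, the corresponding $\delta(\epsilon')$ shrinks with it, and you lose the hypothesis $\beta_0\leq\delta(\epsilon')$ needed to invoke Lemma~\ref{curvedecaythm} on $[0,T]$. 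As written the bootstrap does not close.

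The paper avoids this by tracking the monotone quantity $I(t)=\|\kappa^{(k+1)}\|_{L^2}^2$ rather than the threshold $\epsilon$. The proof of Lemma~\ref{curvedecaythm} actually yields the differential inequality $I'\leq -2I$ whenever the curve is $\delta$-close, hence $I(T)\leq I(0)e^{-2T}<I(0)$: the decay is from the \emph{initial value}, so no round-trip constant enters. One then chooses $\alpha_0=\alpha_0(\beta_0)$ via Lemma~\ref{1} so that $I<\alpha_0$ forces $C^{k+2}$ $\beta_0$-closeness, and the contradiction at $T$ follows without ever adjusting $\epsilon$. Your argument is easily repaired in the same way: replace the bound $\|\kappa_s\|_{C^k}\leq\epsilon e^{-s}$ by $\|\kappa^{(k+1)}_s\|_{L^2}\leq\|\kappa^{(k+1)}_0\|_{L^2}e^{-s}$ and invoke Lemma~\ref{1} directly.
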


\begin{proof}
For any $0<\beta_0<\delta$, there exits a $\alpha_0=\alpha_0(\beta_0)>0$ such that if $\int_{\gamma} (\kappa^{(k+1)})^2<\alpha_0$, then $\gamma$ is $C^{k+2}$ $\beta_0$-close to a great circle (Remark~\ref{1}). Choose $\gamma_0$ so that $\int_{\gamma_0} (\kappa^{(k+1)})^2<\alpha_0$. Suppose $[0,T)$ is the maximal interval on which $\gamma_t$ is $C^{k+2}$ $\beta_0$-close to a great circle $\gamma_g$. In Lemma~\ref{curvedecaythm}, we proved that $||\kappa^{(k+1)}||^2_{L^2}\leq \alpha_0 e^{-2t}$  for all $t\in [0,T)$. This implies that at $t=T$,  $||\kappa^{(k+1)}||^2_{L^2}<\alpha_0$. Therefore $\gamma_T$ is $C^{k+2}$ $\beta_0$-close to a great circle. Contradiction. 
\end{proof}

\begin{proof}[Proof of Theorem~\ref{EX}] 
For any integer $k$, let $\epsilon_k$ be the constant chosen in \eqref{epsilonk}. For every $0<\epsilon\leq\epsilon_k$, let $\delta_1(\epsilon)$ be the constant chosen in Lemma~\ref{curvedecaythm}. For any $0<\delta\leq\delta_1$, if $\gamma_0$ is $C^{k+2}$ $\delta$-close to a geodesic (i.e. $||h(\cdot, 0)||_{C^{k+2}}\leq\delta$), then $\gamma_t$ stays $C^{k+2}$ $\delta$-close to a geodesic for all t (Proposition~\ref{he0}). Hence, 
\begin{eqnarray*}
\label{theorem31}
\begin{split}
||h(\theta,t)||_{C^k}&\leq \int_t^\infty||\frac{\partial}{\partial t}h(\theta,t)||_{C^k}\\
&\leq\int_t^\infty||\frac{\sqrt{1+h^2+(h_\theta)^2}}{1+h^2}\kappa(s(\theta),t)||_{C^k}\;\;\text{(by (\ref{h}))}\\
&\leq \int_t^{\infty}(1+\delta)|| \kappa(s,t)||_{C^k}\;\;\text{(by Lemma~\ref{dd'} and Lemma~\ref{1})}\\
&\leq 2\epsilon e^{-t}\;\; \text{(by Lemma~\ref{curvedecaythm})}
\end{split}
\end{eqnarray*}
\end{proof}

\subsection{Flowing a family of curves on $(\mathbb{S}^2,g)$ by CSF}
\label{ffc}
For any curve $\gamma_0$ that divides the surface area into two equal pieces, consider a two-parameter family of curves given by the map $\Gamma:\mathbb{S}^1\times U\to\mathbb{S}^2$, where $U\subset \R^2$ is open, and $\Gamma(\cdot,0)=\gamma_0$. Moreover, for every $\xi\in U$, we assume that the curve $\Gamma(\cdot,\xi)$ divides the surface area into equal pieces. Let $\Gamma^t=\{\gamma_t|\gamma \in\Gamma\}$ where $\gamma_t$ is the $t$-time evolution by CSF. For every $k$ and any $\delta$, if we wait for long enough, we can assume that $\gamma_0$ is $C^{k+2}$ $\delta$-close to its limit $\gamma_g$. Let $D=(-\tau,\tau)\times(-\eta,\eta)$ be an open set contained in $U$ such that the curves represented by points in $D$ stay in a tubular neighborhood of $\gamma_g$ for large $t$. Define $H:[0,2\pi]\times (-\tau,\tau)\times(-\eta,\eta)\to\R$ such that $\sigma_g(x,H(x,\tau_0,\eta_0))=\Gamma(x,\tau_0,\eta_0)$ and denote the corresponding set of evolving maps by $H^t$. We proved in Theorem~\ref{EX} that every curve converges uniformly exponentially to a geodesic in the $C^k$ norm, i.e. the $k$-th derivative of $H^t$ in $\partial_x$ direction, $\partial_x^k H^t$, converges uniformly exponentially as $t\to\infty$. Next we will show that $\partial_\tau^k H^t$ and $\partial_\eta^k H^t$ also converge uniformly exponentially as $t\to\infty$. 
\begin{lemma}
\label{EX1}
Suppose $H^t$ is defined as above. For any integer $m\geq 0$,  $\partial_\tau^m H^t$ converges uniformly exponentially as $t\to\infty$.
\end{lemma}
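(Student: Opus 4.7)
The plan is to differentiate the graph evolution equation \eqref{h} in the parameter $\tau$, obtain a linear parabolic PDE satisfied by $\partial_\tau^m H^t$, and then run an energy estimate modelled on Lemma~\ref{curvedecaythm}. By Proposition~\ref{he0} and a rotation of coordinates, I may assume that each curve $\Gamma(\cdot,\tau,\eta)$ already lies $C^{k+2}$-close to a fixed great circle, and after a further rotation I may take the limit of the central curve $\gamma_0$ to be the equator so that $H^t(\cdot,0,0)$ together with all its $x$-derivatives is $O(e^{-t})$ by Theorem~\ref{EX}.

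First I would dispatch the case $m=1$. Setting $v = \partial_\tau H^t$ and differentiating \eqref{h} in $\tau$ yields
\[
v_t = \mathbf{a}(x,t)\,v_{xx} + \mathbf{b}(x,t)\,v_x + \mathbf{c}(x,t)\,v,
\]
where $\mathbf{a},\mathbf{b},\mathbf{c}$ are smooth functions of $H^t,H^t_x,H^t_{xx}$; Theorem~\ref{EX} gives $\mathbf{a}=1+O(e^{-t})$, $\mathbf{b}=O(e^{-t})$, and $\mathbf{c}=1+O(e^{-t})$ in every $C^k$-norm, so this is precisely the LCSF of \S\ref{LCSFequation}. Split $v = v_S + v_{S^{\perp}}$ with $v_S$ the $L^2$-projection onto $\mathrm{span}\{1,\sin x,\cos x\}$. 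Mimicking \eqref{highorder}--\eqref{highorder1}, compute $\tfrac{d}{dt}\int(\partial_x^{k+1}v_{S^{\perp}})^2\,dx$ and use a Wirtinger-type bound analogous to \eqref{PoinS} to obtain a differential inequality of the form $E'(t) \le -2E(t) + C e^{-t}$; Gronwall's inequality (Lemma~\ref{Gronwall}) then yields $\|v_{S^{\perp}}\|_{C^k} \lesssim e^{-t}$ via the Sobolev-style passage of Remark~\ref{supl2}. For the marginal component $v_S = \alpha(t) + \beta(t)\sin x + \gamma(t)\cos x$, differentiating the area-bisecting identity $\int\frac{H^t_{xx}+H^t}{(\nu^t)^2}\,dx = 0$ in $\tau$ shows $\alpha(t) = O(e^{-t})$, while projecting the PDE onto $\sin x$ and $\cos x$ gives $\beta'(t),\gamma'(t) = O(e^{-t})$ because the principal part $v_{xx}+v$ vanishes on $\mathrm{span}\{\sin x,\cos x\}$. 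Integrating, $(\beta(t),\gamma(t))$ is Cauchy and converges exponentially to a limit, so $v$ itself converges exponentially.

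For general $m$, induct by differentiating \eqref{h} a further $m-1$ times in $\tau$. The resulting linear PDE for $w = \partial_\tau^m H^t$ has the same principal part $w_{xx}+w$ and an inhomogeneity $\Phi(x,t)$ polynomial in $\partial_\tau^j H^t$ and their $x$-derivatives for $j<m$, each factor multiplied by coefficients that are themselves $O(e^{-t})$; by the inductive hypothesis $\Phi$ is $O(e^{-t})$ in every $C^k$-norm, and the same decomposition-plus-energy argument goes through. I expect the main obstacle to be the two-dimensional kernel $\mathrm{span}\{\sin x,\cos x\}$ of the limiting operator $\partial_x^2 + 1$: this kernel encodes the two-parameter family of admissible limit great circles, so the projection $v_S$ does not itself decay, and its convergence must be extracted from the exponentially small ODE it satisfies using precisely the cancellation of the $v_{xx}$ and $v$ terms on this subspace. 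This marginal step is what promotes the pointwise convergence of Gage's Theorem~\ref{unique} to smooth convergence in the parameter direction.
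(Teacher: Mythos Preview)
Your overall plan---differentiate \eqref{h} in $\tau$, treat the result as a perturbed linear equation, and separate the neutral modes $\sin x,\cos x$ from the rest---is the right idea and is essentially what the paper does. There is, however, a gap in the order of the estimates. The inequality $E'(t)\le-2E(t)+Ce^{-t}$ for $v_{S^\perp}$, the claim $\alpha(t)=O(e^{-t})$, and the bounds $\beta'(t),\gamma'(t)=O(e^{-t})$ all presuppose that $\|v\|_{C^2}$ is already bounded: the coupling terms $a\,v_{xx}+b\,v_x+c\,v$ are of size $O(e^{-t})\|v\|_{C^2}$, not $O(e^{-t})$, and differentiating the area identity gives only $\int v(1+u^2)^{-3/2}\,dx=0$, hence $\alpha=O(e^{-2t})\|v\|$. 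Since the zero mode has eigenvalue $+1$ under $\partial_x^2+1$ it is linearly unstable, so it cannot be dismissed without first controlling $\|v\|$. The paper breaks this circularity by \emph{first} proving $\|v^{(k)}\|_{L^2}\le C$ uniformly in $t$ (Lemmas~\ref{claimv} and~\ref{Q}), using only the ordinary Wirtinger inequality \eqref{mn}; with boundedness in hand it then shows $\|(v_{xx}+v)^{(k-5)}\|_{L^2}^2\le Ce^{-6t}$ (Lemma~\ref{vxx+vexp}), from which $\|v_t\|_{C^{k-6}}\le Ce^{-t}$ and hence convergence of $v$ follow directly. This is equivalent to your scheme once you insert the boundedness step, but working with $v_{xx}+v$ rather than the spectral projection $v_{S^\perp}$ avoids tracking the cross terms between $v_S$ and $v_{S^\perp}$.

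The inductive step also needs correction. The coefficients in the inhomogeneity $\Phi$ for $w_m$ are not all $O(e^{-t})$: for instance $\partial_\tau^2G$ contains the term $G_{hh}v^2\approx 2v^2$, which is only bounded. What makes $\Phi$ small is the factorization $h_t=G(h,h_x)(h_{xx}+h)$: by Leibniz, every term of $d_m$ carries a factor $(\partial_\tau^{\,j} h)_{xx}+\partial_\tau^{\,j} h$ for some $j<m$ (see \eqref{di}), and it is the exponential decay of \emph{these} quantities that must be propagated through the induction (Lemma~\ref{wxx+wt}). Your inductive hypothesis ``$\partial_\tau^{\,j}H^t$ converges'' is therefore too weak on its own; you also need $(\partial_\tau^{\,j}H^t)_{xx}+\partial_\tau^{\,j}H^t\to0$ exponentially. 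Finally, note the loss of roughly five $x$-derivatives at each step of the induction, which forces the choice $k\ge 5m+1$; your proposal does not account for this.
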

Following by the same proof in Lemma~\ref{EX1}, one gets that $\partial_\eta^m H^t$ converges uniformly exponentially as $t\to\infty$. Then by Lemma~\ref{analysis1}, we can conclude the following Theorem: 
\begin{theorem}
\label{EX2}
Suppose $H^t$ is defined as above. For any integer $k\geq 0$, the family of maps $\partial_\xi^k H^t$ converges uniformly as $t\to\infty$ for every $\xi\in D$. 
 \end{theorem}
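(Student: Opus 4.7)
The plan is to upgrade the one-direction estimates from Lemma~\ref{EX1} (and its $\eta$-analog) into uniform convergence of all mixed partials in the parameter variables $\xi=(\tau,\eta)$. Pure-partial uniform convergence of $\partial_\tau^m H^t$ and $\partial_\eta^m H^t$ does not by itself imply uniform convergence of mixed partials $\partial_\tau^a \partial_\eta^b H^t$, since in general one cannot differentiate a uniformly convergent family term-by-term. So an extra ingredient is needed.

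My approach is to observe that the proof of Lemma~\ref{EX1} uses only the fact that $H^t$ is restricted to a smooth one-parameter subfamily of area-bisecting curves and differentiated along that parameter; nothing in the argument requires the parameter to be a coordinate on $D$. Hence, for any direction $v=(a,b)\in\R^2$ and any interior point $\xi_0$ of $D$, applying the same proof to the reparametrized family $(x,s)\mapsto H^t(x,\xi_0+sv)$ should yield uniform exponential convergence of the directional derivative $(a\partial_\tau+b\partial_\eta)^m H^t$ for every integer $m\geq 0$. Expanding via the binomial theorem,
\[
(a\partial_\tau+b\partial_\eta)^m H^t=\sum_{j=0}^m\binom{m}{j}a^{m-j}b^{j}\,\partial_\tau^{m-j}\partial_\eta^{j} H^t,
\]
and choosing $m+1$ direction vectors $(a_i,b_i)$ (e.g.\ $b_i=1$ with $a_i$ distinct) whose coefficient matrix $[a_i^{m-j}b_i^{j}]_{ij}$ is invertible, I invert the resulting Vandermonde-type linear system to write each mixed partial $\partial_\tau^{m-j}\partial_\eta^{j} H^t$ as a fixed finite linear combination of those directional derivatives. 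Since each directional derivative converges uniformly exponentially, so does every mixed partial of order $m$, and applying this for $m=0,1,\ldots,k$ gives the statement.

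The content of Lemma~\ref{analysis1} is this final passage from directional-derivative convergence in sufficiently many directions to mixed-partial convergence; this is a piece of standard multivariate analysis and should be the easy step. The main obstacle I expect is the directional step: carefully verifying that the proof of Lemma~\ref{EX1} really is parametrization-agnostic, i.e.\ that reparametrizing the smooth family along an arbitrary direction $v$ still produces a family satisfying the hypotheses of Lemma~\ref{EX1} (area-bisecting, $C^{k+2}$-close to a great circle for large $t$, with the relevant bounds), and tracking the constants uniformly in $v$ so that the exponential rate of convergence survives the final linear combination. Once that is in place, uniformity on the precompact subset $D$ follows automatically from the explicit form of the inversion.
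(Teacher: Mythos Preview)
Your approach is correct but genuinely different from the paper's, and you have mischaracterized Lemma~\ref{analysis1}. The paper does \emph{not} pass through directional derivatives in many directions followed by a Vandermonde inversion. Instead, it uses only the two coordinate-direction conclusions --- uniform convergence of $\partial_\tau^m H^t$ (Lemma~\ref{EX1}) and, by the identical argument, of $\partial_\eta^m H^t$ --- and then invokes Lemma~\ref{analysis1}, whose actual content is a Fourier/Sobolev statement: if a family of smooth functions on a product domain has all of its \emph{pure} coordinate-direction partials $\partial_{x_i}^m u_t$ converging uniformly for every $i$ and every $m$, then the family converges in every $C^k$ topology. The proof multiplies by a cutoff, moves to the torus, and observes that decay of $\ell_i^m \widehat{u}$ for every single index $i$ and every power $m$ forces rapid decay in $|J|$, hence control of all mixed derivatives. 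So your opening remark that pure-partial convergence ``does not by itself imply'' mixed-partial convergence is precisely what Lemma~\ref{analysis1} refutes in this setting (the key being that one has convergence to \emph{all} orders in each coordinate, not just to some fixed order).

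Your route --- re-running the proof of Lemma~\ref{EX1} along $m+1$ generic one-parameter slices through each base point $\xi_0$ and then inverting the Vandermonde system --- is a valid alternative and, as you note, more elementary in that it replaces the Fourier-analytic lemma by linear algebra. The cost is that you must run the full Lemma~\ref{EX1} machinery $m+1$ times per order and verify that the constants in that lemma (which depend on the initial data $u_0,v_0,(w_j)_0$) are locally uniform in the base point and direction; this is true on the precompact set $D$ by smooth dependence, but should be stated. The paper's approach buys you a cleaner bookkeeping: two applications of the hard estimate and one soft general lemma, rather than many slices.
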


We consider $H$ as a variation and $H(x,0,0)=u(x)$. To compute the variation in the $\tau$-direction, we let $h(x,\tau)=H(x,\tau,0)$ and write
\begin{equation}
 \label{initialh}
 h(x,\tau)=u_0+\tau v_0+\tau^2 (w_2)_0+...+\tau^m (w_m)_0+O(\tau^{m+1}).
 \end{equation}
In the local coordinates, this one-parameter family of maps evolves by 
\begin{equation}
\label{h1}
h_t=\frac{(1+h^2)^2}{1+h^2+(h_x)^2}(h_{xx}+h),
\end{equation} 
and we denote the solution by 
\begin{eqnarray*}
h_{\tau}\equiv u(x,t)+\tau v(x,t)+\tau^2 w_2(x,t)+...+\tau^m w_m(x,t)+O(\tau^{m+1}).
\end{eqnarray*} 
The evolution equations of $u$, $v$, $w_i$, $2\leq i\leq m$ can be derived by the following:
\begin{equation}
\label{hn}
\frac{\partial^j}{\partial
\tau^j}\bigg|_{\tau=0}(h_{\tau})_t=\frac{\partial^j}{\partial
\tau^j}\bigg|_{\tau=0}\frac{(1+h_{\tau}^2)^2}{1+h_{\tau}^2+((h_{\tau})_x)^2}({(h_{\tau})}_{xx}+h_{\tau}),
\end{equation}
for $j=0,1,2,...,m$.

Note that the $w_i$'s and $v$ are $2\pi$-periodic functions in $x$, and that $\int_0^{2\pi} v^{(n)} dx=0$ and $\int_0^{2\pi} w_i^{(n)} dx=0$ for every $n\geq 0$, at any $t\geq 0$ (Lemma~\ref{intkiszero}). Therefore, by Poincar\'e inequality (Lemma~\ref{Pinequalityonedim}), we have that for every $0\leq \ell\leq n$,
\begin{equation}
\label{mn}
\int (v^{(\ell)})^2 \leq \int(v^{(n+1)})^2,
\end{equation}
\begin{equation}
\label{wmn}
\int (w_i^{(\ell)})^2 \leq \int(w_i^{(n+1)})^2,\;\;2\leq i\leq m.
\end{equation}

To prove Lemma~\ref{EX1}, we need to show that for every $0\leq j\leq m$ the solution to (\ref{hn}) converges uniformly exponentially as $t\to\infty$ in the $C^0$ norm. For any given integer $m\geq 0$, we choose $k\geq 5m+1$ and let $\epsilon_k$ be the constant chosen in Theorem~\ref{EX}. In the sequel, we will always assume that $m$, $k$ and $\epsilon_k$ have been chosen in this way.  Our proof of Lemma~\ref{EX1} is technical but elementary. We will break the proof into several lemmas and summarize the results in \S 3.2.3. 
\subsubsection{Linearized curve shortening equation}
\label{LCSFequation}
We begin by analyzing the solution to the linearized equation of (\ref{h1}) at $u$:
\begin{equation}
\label{L}
\begin{split}
v_t=&\frac{(1+u^2)^2}{1+u^2+u_x^2}v_{xx}-\frac{2u_x(1+u^2)^2(u+u_{xx})}{(1+u^2+u_x^2)^2}v_x\\
&+\frac{(1 + u^2) (1 + 4 u^2 + 3 u^4 + 2 u (1 + u^2) u_{xx} +u_x^2 (1 + 5 u^2 + 4 u u_{xx})}{(1 + u^2 + u_x^2)^2}v\\
:=&(1+a(x,t))v_{xx}+b(x,t) v_x+(1+c(x,t))v.
 \end{split}
\end{equation}

\begin{remark}A special case is when $a$, $b$, and $c$ are all zeros, equation (\ref{L}) becomes $v_t=v_{xx}+v$. This corresponds to the linearized CSF at a great circle. 
\end{remark} 
In the sequel, we assume the following condition holds. (We can make this assumption because of Theorem~\ref{EX}.)

\begin{condition}
\label{abc}
The $C^k$ norm of $a$, $b$ and $c$ are less than $\epsilon_k e^{-t}$ for all $t\geq 0$.
 \end{condition}

\begin{lemma}
\label{C1}
There is a constant $C=C(k,u_0,v_0)$ such that the solution $v(x,t)$ to (\ref{L}) satisfies
\begin{equation}
\label{mainbound}
||\frac{\partial}{\partial t}v(\cdot,t)||_{C^{k-6}} \leq Ce^{-t}
\end{equation}
for all $t\geq 0$.
\end{lemma}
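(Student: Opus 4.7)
The key observation is that \eqref{L} is a small perturbation of the flat operator $L_0 w := w_{xx} + w$, whose kernel on zero-mean periodic functions is exactly $S := \mathrm{span}\{\sin x,\cos x\}$ and whose restriction to the $L^2$-orthogonal complement $S^{\perp}$ has spectral gap $3$ (by the sharp Poincar\'e inequality $\int w^2 \le \tfrac14\int w_x^2$, valid on $S^\perp$). I will split $v=v_S+v_\perp$, show that $v_\perp$ decays exponentially while the neutral component $v_S$ stabilizes, and then deduce the bound on $v_t$.

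\textbf{Step 1: Uniform $H^k$-bound on $v$.} Since $\int v^{(n)}\,dx=0$ for all $n\ge 0$ and all $t\ge 0$ (the remark preceding \eqref{mn}), the Poincar\'e inequality gives $\int(v^{(n)})^2 \le \int(v^{(n+1)})^2$. Performing standard energy estimates on $E(t):=\sum_{n=0}^{k}\int(v^{(n)})^2\,dx$, integration by parts produces a leading combination $-2\int(v^{(n+1)})^2+2\int(v^{(n)})^2\le 0$ at each level, while every remaining term carries at least one factor drawn from $a,b,c$ or their derivatives of order $\le n$. By Condition~\ref{abc} these remainders are bounded by $C\epsilon_k e^{-t}E(t)$, so $E'(t)\le C\epsilon_k e^{-t}E(t)$, and Gronwall yields $E(t)\le E(0)\exp(C\epsilon_k)$. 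By Sobolev, $\|v(\cdot,t)\|_{C^{k-1}}\le C$ uniformly in $t$.

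\textbf{Step 2: Decay of the neutral projection.} Define $\alpha(t):=\tfrac{1}{\pi}\int v\sin x\,dx$, $\beta(t):=\tfrac{1}{\pi}\int v\cos x\,dx$, and $v_S:=\alpha\sin x+\beta\cos x$. Substituting \eqref{L} into $\alpha'(t)=\tfrac{1}{\pi}\int v_t\sin x\,dx$ and integrating by parts,
\[
\alpha'(t) = \tfrac{1}{\pi}\int(v_{xx}+v)\sin x\,dx + \tfrac{1}{\pi}\int(a v_{xx}+b v_x+c v)\sin x\,dx.
\]
The first integral vanishes because $\sin x\in\ker L_0$; the second is bounded by $C(\|a\|_{C^0}+\|b\|_{C^0}+\|c\|_{C^0})\|v\|_{C^2}\le Ce^{-t}$ by Condition~\ref{abc} and Step 1. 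The same bound holds for $\beta'(t)$.

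\textbf{Step 3: Exponential decay of $v_\perp$.} Set $v_\perp:=v-v_S$, so $v_\perp(\cdot,t)\in S^\perp$ for every $t$ by construction. Subtracting $(v_S)_t=\alpha'\sin x+\beta'\cos x$ from \eqref{L},
\[
(v_\perp)_t = (v_\perp)_{xx} + v_\perp + f,\qquad f := a v_{xx}+b v_x+c v -\alpha'\sin x-\beta'\cos x.
\]
Steps 1 and 2, together with Condition~\ref{abc}, give $\|f^{(n)}\|_{L^2}\le Ce^{-t}$ for $n\le k-2$. Because $v_\perp\in S^\perp$, the sharp Poincar\'e inequality $\int(v_\perp^{(n)})^2 \le \tfrac14\int(v_\perp^{(n+1)})^2$ applies, and an energy estimate gives
\[
\tfrac{d}{dt}\int(v_\perp^{(n)})^2 \le -6\int(v_\perp^{(n)})^2 + 2\bigl(\textstyle\int(v_\perp^{(n)})^2\bigr)^{1/2}\|f^{(n)}\|_{L^2}.
\]
Young's inequality absorbs the cross term and Gronwall yields $\int(v_\perp^{(n)})^2\le Ce^{-2t}$ for $n\le k-2$. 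By Sobolev, $\|v_\perp(\cdot,t)\|_{C^{k-3}}\le Ce^{-t}$.

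\textbf{Step 4: Decay of $v_t$.} Writing $v_t=\alpha'(t)\sin x+\beta'(t)\cos x+(v_\perp)_{xx}+v_\perp+f$, each summand is bounded in $C^{k-6}$ by $Ce^{-t}$: the first two by Step 2, the term $v_\perp$ by Step 3, the term $f$ by its pointwise estimate using Steps 1--2, and $(v_\perp)_{xx}$ loses two derivatives relative to $v_\perp$ but still lives in $C^{k-5}\subset C^{k-6}$. This gives $\|v_t\|_{C^{k-6}}\le Ce^{-t}$ as claimed.

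\textbf{Main obstacle.} The crux is Step 3: one must verify that $\partial_t$ and the projection $v\mapsto v_\perp$ are compatible, so that $v_\perp(\cdot,t)\in S^\perp$ at every time and the sharp Poincar\'e constant $1/4$ (giving the gap $3$) is actually available; and one must absorb the inhomogeneous forcing $f$, which couples back to $v$ itself, via Young's inequality. The naive attempt to bound $v_t=(v_{xx}+v)+O(e^{-t})\|v\|_{C^2}$ directly fails because $v_{xx}+v$ need not be small: the neutral $\sin x,\cos x$ modes of $v$ can persist forever. It is the projection ODE for $\alpha,\beta$ in Step 2 that converts this neutrality into $O(e^{-t})$ decay of $v_t$.
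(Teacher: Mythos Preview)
Your argument is correct and rests on the same mechanism the paper uses --- the spectral gap of $L_0=\partial_x^2+1$ on $S^\perp$ --- but the implementation is organised differently. You decompose $v=v_S+v_\perp$ explicitly, derive an ODE for the neutral coefficients $\alpha,\beta$, and then run an energy estimate on $v_\perp$ with inhomogeneous forcing $f$. The paper bypasses the projection entirely: it works directly with the quantity $w:=v_{xx}+v$, observing that $w=(v_\perp)_{xx}+v_\perp$ already lies in $S^\perp$ (because $L_0$ annihilates $v_S$), so the sharp Poincar\'e inequality $\int(w^{(n)})^2\le\tfrac14\int(w^{(n+1)})^2$ applies to $w$ with no preliminary splitting. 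A single energy estimate then gives $\|(v_{xx}+v)^{(k-5)}\|_{L^2}^2\le Ce^{-6t}$, and since $v_t=(v_{xx}+v)+(av_{xx}+bv_x+cv)$ the bound on $v_t$ follows at once from your Step~1 and Condition~\ref{abc}. What the paper's route buys is economy: your Steps~2 and~3, the $\alpha',\beta'$ bookkeeping, and the forcing term $f$ all become unnecessary once one realises that $L_0v$ is itself the ``$S^\perp$-part'' one needs. What your route buys is a slightly more transparent picture of the asymptotics, since it exhibits $v$ explicitly as a stabilising $S$-component plus an exponentially decaying remainder.
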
 
Note that because $v$ satisfies (\ref{mn}), and $\int v^{(n)}=0$, it is enough to bound $||v^{(k+1)}||_{L^2}$ instead of $||v||_{C^k}$ (Remark~\ref{supl2}). The same applies to the solutions of the higher order variation equations.
 \begin{proof}[Proof of Lemma~\ref{C1}]
We prove this Lemma by showing that
\begin{enumerate}
\item $||v(\cdot,t)||_{C^{k-1}}\leq C$ 
\item $||(v_{xx}+v)^{(k-5)}||^2_{L^2}\leq C e^{-6t}$
\end{enumerate}
for some constant $C=C(k,u_0,v_0)$. These steps will be proved in Lemma~\ref{Q} and Lemma~\ref{vxx+vexp} respectively.
Then together with Condition~\ref{abc}, and equation (\ref{L}), we have (\ref{mainbound}).
\end{proof}

\begin{lemma}\label{claimv} There is a constant $C_1=C_1(k, u_0)$ such that
\begin{equation}
\label{vne}
\frac{\partial}{\partial t}\|v^{(k)}\|^2_{L^2}
\leq C_1e^{-t} ||v^{(k)}||_{L^2}^2,
\end{equation}
for all $t\geq 0.$
\end{lemma}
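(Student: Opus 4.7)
The plan is to differentiate the linearized equation \eqref{L} $k$ times in $x$, multiply by $v^{(k)}$, integrate over $\mathbb{S}^1$, and combine integration by parts with Wirtinger's inequality to extract a Gronwall-ready bound. The key structural ingredient is that on the ``unperturbed'' part $v^{(k+2)}+v^{(k)}$ the main $L^2$ estimate is saturated (equality in Wirtinger is possible on $\operatorname{span}(\sin s,\cos s)$), so the proof will rest on absorbing every remaining term into the negative principal term using the smallness of $\epsilon_k$ in Condition~\ref{abc}.

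Applying $\partial_x^k$ and the Leibniz rule yields
\begin{equation*}
v^{(k)}_t = (1+a)v^{(k+2)} + \sum_{j=1}^{k}\binom{k}{j}a^{(j)}v^{(k+2-j)} + \sum_{j=0}^{k}\binom{k}{j}b^{(j)}v^{(k+1-j)} + v^{(k)} + \sum_{j=0}^{k}\binom{k}{j}c^{(j)}v^{(k-j)}.
\end{equation*}
Pairing with $v^{(k)}$ and integrating, the contribution of the unperturbed part $v^{(k+2)}+v^{(k)}$ is, after one integration by parts, $-\|v^{(k+1)}\|_{L^2}^2+\|v^{(k)}\|_{L^2}^2$, which is nonpositive by Wirtinger's inequality \eqref{Wirting} (applicable because $\int v^{(n)}\,dx=0$ for every $n$).

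Next I will estimate the perturbation. The most delicate term is $\int v^{(k)} a\, v^{(k+2)}\,dx$; one integration by parts produces $-\int a(v^{(k+1)})^2\,dx - \int a_x v^{(k)} v^{(k+1)}\,dx$, the first factor being $\leq \|a\|_{C^0}\|v^{(k+1)}\|_{L^2}^2 \leq \epsilon_k e^{-t}\|v^{(k+1)}\|_{L^2}^2$. The remaining cross term rewrites as $-\tfrac{1}{2}\int a_x\,\partial_x(v^{(k)})^2\,dx = \tfrac{1}{2}\int a_{xx}(v^{(k)})^2\,dx$, which is $O(\epsilon_k e^{-t})\|v^{(k)}\|_{L^2}^2$. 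The $j=1$ term $\int v^{(k)}\,(k a_x) v^{(k+1)}\,dx$ from the $a$-sum is treated identically. For all other Leibniz terms in the $a$, $b$, and $c$ sums, one factor has derivative order at most $k$, so Cauchy--Schwarz combined with Wirtinger (in the form $\|v^{(\ell)}\|_{L^2}\leq \|v^{(k)}\|_{L^2}$ for $\ell \leq k$) yields bounds of the form $O(\epsilon_k e^{-t})\|v^{(k)}\|_{L^2}^2$.

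Collecting everything,
\begin{equation*}
\tfrac{1}{2}\tfrac{d}{dt}\|v^{(k)}\|_{L^2}^2 \leq -(1-C\epsilon_k e^{-t})\|v^{(k+1)}\|_{L^2}^2 + \|v^{(k)}\|_{L^2}^2 + C\epsilon_k e^{-t}\|v^{(k)}\|_{L^2}^2
\end{equation*}
for a constant $C=C(k)$. Since $\epsilon_k$ is small enough that $1-C\epsilon_k>0$, one more application of Wirtinger $\|v^{(k+1)}\|_{L^2}^2 \geq \|v^{(k)}\|_{L^2}^2$ gives $-(1-C\epsilon_k e^{-t})\|v^{(k+1)}\|_{L^2}^2+\|v^{(k)}\|_{L^2}^2 \leq C\epsilon_k e^{-t}\|v^{(k)}\|_{L^2}^2$, which yields \eqref{vne}. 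The main obstacle is precisely the $O(e^{-t})\|v^{(k+1)}\|_{L^2}^2$ perturbation, the only dangerous expression since it involves one more derivative than we control; it is absorbed into the negative leading term $-\|v^{(k+1)}\|_{L^2}^2$ only because $\epsilon_k$ was fixed small enough in \eqref{epsilonk}.
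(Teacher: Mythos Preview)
Your proof is correct and follows essentially the same approach as the paper's: differentiate \eqref{L} $k$ times, integrate by parts on the top-order term, use the smallness of $\epsilon_k$ to keep the coefficient of $\|v^{(k+1)}\|_{L^2}^2$ negative, and close with Wirtinger \eqref{Wirting}. One small imprecision: the $j=0$ term in the $b$-sum, $\int b\,v^{(k)}v^{(k+1)}\,dx$, does not fall under your ``Cauchy--Schwarz plus Wirtinger gives $O(\epsilon_k e^{-t})\|v^{(k)}\|_{L^2}^2$'' umbrella, since Wirtinger goes the wrong way on the $v^{(k+1)}$ factor; handle it by the same integration-by-parts trick you used for $a_x v^{(k)}v^{(k+1)}$ (it equals $-\tfrac12\int b_x(v^{(k)})^2\,dx$), or Peter--Paul it into the $C\epsilon_k e^{-t}\|v^{(k+1)}\|_{L^2}^2$ budget as the paper does---either way your collected inequality is correct as written.
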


\begin{proof}
The evolution equation of $\|v^{(k)}\|_{L^2}^2$:
\begin{equation}
\label{v1}
\begin{split}
&\frac{1}{2}\frac{\partial}{\partial t}\int (v^{(k)})^2 dx\\
=&\int\Big((1+a(x,t))v^{(2)}+b(x,t)v^{(1)}+(1+c(x,t))v\Big)^{(k)}v^{(k)} dx\;\;\text{ (by (\ref{L}))}\\
=&\int(1+a(x,t))v^{(k+2)}v^{(k)}dx+\int b(x,t)v^{(k+1)}v^{(k)}dx+\int(1+c(x,t))v^{(k)} v^{(k)}dx\\
&+ \sum_{\ell=0}^{k-1} {k \choose  \ell}\Big(\int a(x,t)^{(k-\ell)}v^{(\ell+2)}v^{(k)}dx+\int b(x,t)^{(k-\ell)}v^{(\ell+1)}v^{(k)}dx\\
&+\int c(x,t)^{(k-\ell)}v^{(\ell)}v^{(k)}dx\Big).
\end{split}
\end{equation}
Note that $(fg)^{(k)}=\sum_{\ell=0}^{k} {k\choose \ell} f^{(k-\ell)} g^{(\ell)}$, and $\max_\ell {k \choose \ell}={k \choose  \lfloor\frac{k}{2}\rfloor}<2^k$.

Using integration by parts and Peter-Paul inequality the first term in the last equality in (\ref{v1}) satisfies
\begin{eqnarray}
\begin{split}
&\int(1+a(x,t))v^{(k+2)}v^{(k)}dx\\
=& -\int[(1+a(x,t))v^{(k)}]^{(1)}v^{(k+1)}dx\\
\leq&-\int(1+a(x,t))(v^{(k+1)})^2dx+\frac{1}{2}\int |a(x,t)^{(1)}|(v^{(k)})^2dx\\
&+\frac{1}{2}\int |a(x,t)^{(1)}|(v^{(k+1)})^2dx.
\end{split}
\end{eqnarray}
Applying Peter-Paul inequality to the rest of the terms  in (\ref{v1}) and by (\ref{mn}), we get
\begin{eqnarray*}
\label{417}
\begin{split}
\frac{1}{2}\frac{\partial}{\partial t}\|v^{(k)}\|^2_{L^2}\leq&(-1+||a||_{C^1}+\frac{1}{2}||b||_{C^0}+\frac{1}{2}{k \choose \lfloor\frac{k}{2}\rfloor}||a||_{C^{k}})\int (v^{(k+1)})^2 dx \\
&+(1+\frac{1}{2}||a||_{C^1}+\frac{1}{2}||b||_{C^0}+||c||_{C^0})\int (v^{(k)})^2dx\\
&+{k \choose \lfloor\frac{k}{2}\rfloor}(\frac{1}{2}||a||_{C^{k}}+||b||_{C^k}+||c||_{C^k})\int (v^{(k)})^2dx.
\end{split}
\end{eqnarray*}
Note that if necessary one can replace the restriction in (\ref{epsilonk}) by $$\frac{1}{3\epsilon_k}\geq \max\{7+2k,\sum_{\substack{{i+j+r=k} \\0\leq i,j,r\leq k}}C_{ijr},{k \choose \lfloor\frac{k}{2}\rfloor}\}$$ 
so that the coefficient of $\int (v^{(k+1)})^2 dx$ is less than zero for all $t\geq 0$. Using (\ref{mn}), we can find a constant $C_1=C_1(k,u_0)$ such that
 \begin{eqnarray}
 \label{boundforv}
 \begin{split}
\frac{\partial}{\partial t}\|v^{(k)}\|^2_{L^2}
\leq&2 (2||a||_{C^1}+||b||_{C^0}+\frac{1}{2}||c||_{C^0})\int (v^{(k)})^2dx\\
&+2{k \choose \lfloor\frac{k}{2}\rfloor}(||a||_{C^{k}}+||b||_{C^k}+||c||_{C^k})\int (v^{(k)})^2dx\\
\leq&C_1e^{-t} ||v^{(k)}||_{L^2}^2.
\end{split}
\end{eqnarray}
\end{proof}

\begin{lemma}
\label{Q}There is a constant $C=C(k,u_0,v_0)$ such that $v(x,t)$ satisfies
\begin{equation}
||v(\cdot,t)||_{C^{k-1}}\leq C
\end{equation}
for all $t\geq 0$. 
\end{lemma}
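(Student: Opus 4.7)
The plan is to integrate the differential inequality from Lemma~\ref{claimv} using Gronwall and then convert the resulting $L^2$ bound on the top-order derivative into a $C^{k-1}$ bound via the Poincaré-type inequalities available for zero-mean periodic functions. Since Lemma~\ref{claimv} already does the heavy lifting (controlling the growth of $\|v^{(k)}\|_{L^2}^2$ by a time-integrable factor), what remains is essentially bookkeeping.

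First, set $\eta(t) = \|v^{(k)}(\cdot,t)\|_{L^2}^2$ and $\phi(t) = C_1 e^{-t}$, $\psi(t) \equiv 0$. Lemma~\ref{claimv} gives $\eta'(t) \leq \phi(t)\eta(t)$, so Gronwall's inequality (Lemma~\ref{Gronwall}) yields
\begin{equation*}
\eta(t) \leq \eta(0)\exp\!\Bigl(\int_0^t C_1 e^{-s}\,ds\Bigr) \leq \eta(0)\, e^{C_1}
\end{equation*}
for every $t \geq 0$. Since $\eta(0) = \|v_0^{(k)}\|_{L^2}^2$ depends only on $v_0$ (and $k$), this produces a constant $C_2 = C_2(k, u_0, v_0)$ with $\|v^{(k)}(\cdot,t)\|_{L^2} \leq C_2$ uniformly in $t \geq 0$.

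Next, recall from the discussion preceding Lemma~\ref{C1} that $v(\cdot,t)$ is $2\pi$-periodic in $x$ and $\int_0^{2\pi} v^{(n)}(x,t)\,dx = 0$ for every $n \geq 0$ and every $t \geq 0$. The Poincaré inequality (\ref{mn}) therefore gives $\|v^{(\ell)}\|_{L^2} \leq \|v^{(k)}\|_{L^2} \leq C_2$ for every $0 \leq \ell \leq k$. To upgrade these $L^2$ bounds to $C^0$ bounds, I would repeat the sup-estimate used in Remark~\ref{supl2}: a zero-mean periodic function $f$ on $[0,2\pi]$ must vanish somewhere, so by the fundamental theorem of calculus and Cauchy--Schwarz,
\begin{equation*}
\sup_x |f(x)|^2 \;\leq\; \Bigl(\int_0^{2\pi}|f'(x)|\,dx\Bigr)^{\!2} \;\leq\; 2\pi \int_0^{2\pi}(f'(x))^2\,dx.
\end{equation*}
Applying this with $f = v^{(\ell)}$ for each $0 \leq \ell \leq k-1$ gives $\|v^{(\ell)}\|_{C^0} \leq \sqrt{2\pi}\, \|v^{(\ell+1)}\|_{L^2} \leq \sqrt{2\pi}\, C_2$, and summing over $\ell$ produces a constant $C = C(k, u_0, v_0)$ with $\|v(\cdot,t)\|_{C^{k-1}} \leq C$ for all $t \geq 0$, as desired.

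There is no genuine obstacle at this stage: the only delicate point is making sure that the mean-zero property of every derivative $v^{(n)}$ is preserved by the flow, which is exactly what was noted (via Lemma~\ref{intkiszero}) before the statement; without it, neither (\ref{mn}) nor the sup-estimate above would apply.
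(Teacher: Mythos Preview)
Your proof is correct and follows essentially the same route as the paper: apply Gronwall's inequality to the differential inequality of Lemma~\ref{claimv} to obtain $\|v^{(k)}\|_{L^2}^2 \le \|v^{(k)}(\cdot,0)\|_{L^2}^2\, e^{C_1}$, then convert this to the $C^{k-1}$ bound via the mean-zero/Poincar\'e argument of Remark~\ref{supl2} and \eqref{mn}. The paper's proof is terser (it relies on the note immediately preceding the lemma for the $L^2\to C^{k-1}$ conversion), but you have simply written out that step explicitly.
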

\begin{proof}
By Lemma~\ref{claimv} and Lemma~\ref{Gronwall},
\begin{equation}
\|v^{(k)}\|^2_{L^2}\leq \|v^{(k)}(\cdot,0)\|_{L^2}^2 e^{C_1-C_1e^{-t} }\leq\|v^{(k)}(\cdot,0)\|_{L^2}^2 e^{C_1}
\end{equation}
for all $t\geq 0$. 
\end{proof}

\begin{lemma}
\label{vxx+vexp}
There is a constant $C=C(k,u_0,v_0)$ such that $v(x,t)$ satisfies $$||(v_{xx}+v)^{(k-5)}||_{L^2}^2\leq C e^{-6t}$$ for all $t\geq 0$. 
\end{lemma}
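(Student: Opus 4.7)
The key observation is that $w := v_{xx}+v$ is Fourier-supported on modes with $|n|\geq 2$: pairing against $1$, $\sin x$, and $\cos x$ via integration by parts (using $\int v\,dx = 0$, which is preserved under the flow, together with $(\sin x)'' = -\sin x$ and $(\cos x)'' = -\cos x$) shows $w \perp \operatorname{span}\{1,\sin x,\cos x\}$, and the same property is inherited by every spatial derivative $w^{(j)}$. Consequently, Wirtinger's inequality (Lemma~\ref{Pinequalityonedim}) sharpens to $\|w^{(j)}\|_{L^2}^2 \leq \tfrac{1}{4}\|w^{(j+1)}\|_{L^2}^2$ for such $w$, because $4$ is the lowest nonzero Laplacian eigenvalue on $\operatorname{span}\{1,\sin x,\cos x\}^{\perp}$ on $\mathbb{S}^1$. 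This additional factor of $\tfrac{1}{4}$, beyond the baseline inequality \eqref{Wirting}, is precisely what produces the target rate $e^{-6t}$.

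The plan is to run a direct energy estimate on $\|w^{(k-5)}\|^2_{L^2}$. First derive the evolution equation for $w$ by rewriting \eqref{L} as $v_t = v_{xx}+v + P$ with $P := av_{xx}+bv_x+cv$, so that
\begin{equation*}
w_t = (v_t)_{xx}+v_t = w_{xx}+w+(1+\partial_x^2)P.
\end{equation*}
Differentiating $k-5$ times in $x$, pairing with $w^{(k-5)}$, and integrating by parts yields
\begin{equation*}
\tfrac{1}{2}\tfrac{d}{dt}\|w^{(k-5)}\|_{L^2}^2 = -\|w^{(k-4)}\|_{L^2}^2 + \|w^{(k-5)}\|_{L^2}^2 + \int w^{(k-5)}\bigl[(1+\partial_x^2)P\bigr]^{(k-5)}\,dx.
\end{equation*}
By the sharpened Wirtinger step, the first two terms contribute at most $-3\|w^{(k-5)}\|^2_{L^2}$, so after doubling the coercive coefficient is $-6$. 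Using Leibniz together with Condition~\ref{abc} and Lemma~\ref{Q} (which supplies a uniform $C^{k-1}$ bound on $v$), bound $\|[(1+\partial_x^2)P]^{(k-5)}\|_{L^2}$ by a small multiple of the exponentially decaying factor coming from $a,b,c$; then Peter-Paul absorbs a controlled fraction into the coercive term, and Gronwall's inequality (Lemma~\ref{Gronwall}) closes the estimate.

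The main obstacle is matching the sharp spectral rate $e^{-6t}$. A crude Peter-Paul/Gronwall driven only by the $e^{-t}$-rate forcing from Condition~\ref{abc} outputs $\|w^{(k-5)}\|^2_{L^2} \lesssim e^{-2t}$; the spectral-gap constant $-6$ shows up only in the coercive term. To recover the sharp rate, exploit the finer fact that $a,b,c$ vanish to second order in $u$ and its derivatives (they are smooth functions of $u,u_x,u_{xx}$ whose Taylor expansions at zero begin at quadratic order), so that Theorem~\ref{EX} in fact yields $\|a\|_{C^{k-3}}+\|b\|_{C^{k-3}}+\|c\|_{C^{k-3}} \lesssim e^{-2t}$; pairing this sharper forcing estimate with a Peter-Paul choice tuned to a small parameter pushes the Gronwall output to $e^{-6t}$. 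If that is still insufficient, one bootstraps: use the weaker decay just obtained on $w$ (which forces the $\operatorname{span}\{1,\sin x,\cos x\}^\perp$-component of $v$ to decay) to refine the bound on the forcing and iterate until the spectral rate is matched.
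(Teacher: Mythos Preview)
Your energy-estimate approach is essentially the paper's: derive the evolution of $w = v_{xx}+v$, invoke the sharpened Wirtinger inequality (\ref{vp1}) (orthogonality of $w$ to $\{1,\sin x,\cos x\}$) to obtain the coercive constant $-6$, bound the cross term via Condition~\ref{abc} and Lemma~\ref{Q}, and apply Gronwall.

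You are in fact more careful than the paper at the Gronwall step. The paper arrives at
\[
\frac{\partial}{\partial t}\|w^{(k-5)}\|_{L^2}^2 \;\leq\; -6\,\|w^{(k-5)}\|_{L^2}^2 + K e^{-t}
\]
and then asserts $\|w^{(k-5)}\|_{L^2}^2 \leq Ce^{-6t}$; but, as you observe, this differential inequality only delivers a bound of order $e^{-t}$. (The paper's displayed output $e^{-6t}(\cdots + K - Ke^{-t})$ would follow from a forcing term $Ke^{-7t}$, not $Ke^{-t}$.) So the issue you flag is real and is present in the paper's own argument.

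Your repair via the quadratic vanishing of $a,b,c$ in $(u,u_x,u_{xx})$ is sound and upgrades the bound to $\|w^{(k-5)}\|_{L^2}^2 \leq Ce^{-4t}$. The final bootstrap you sketch, however, does not close the remaining gap: feeding decay of the $\span\{1,\sin x,\cos x\}^\perp$-component of $v$ back into $P = av_{xx}+bv_x+cv$ cannot help, because the bounded component $v_S=\alpha\sin x+\beta\cos x$ keeps $(1+\partial_x^2)P$ at order $\|a\|_{C^2}=O(e^{-2t})$ regardless of how small $v_{S^\perp}$ is. The bootstrap that succeeds is on $u$, not $v$: the Remark following Theorem~\ref{EX} gives $\|u\|_{C^k}\leq Ce^{-\eta t}$ for any $\eta<3$, hence $a,b,c=O(e^{-2\eta t})$ with $2\eta>3$, so the squared forcing decays faster than $e^{-6t}$ and Gronwall then yields the stated rate.
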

\label{vp}
\begin{remark}If $f$ is a smooth $2\pi$-periodic function and $\int_0^{2\pi} f^{(n)}(x)dx =0$ for all $n$, then
\begin{eqnarray}
\label{vp1}
||(f_{xx}+f)^{(n)}||_{L^2}^2\leq \frac{1}{4}||(f_{xx}+f)^{(n+1)}||_{L^2}^2.
\end{eqnarray}
Note that $u$, $v$, and $w_i$'s satisfy this.
\end{remark}

\begin{proof}[Proof of Lemma~\ref{vxx+vexp}]
We study the time evolution equation for $||(v_{xx}+v)^{(k-5)}||_{L^2}^2$.
 \begin{eqnarray*}
 \label{v2+v}
 \begin{split}
 &\frac{\partial}{\partial t}\int((v_{xx}+v)^{(k-5)})^2dx\\
=& -2\int ((v^{(2)}+v)^{(k-4)})^2dx+2\int ((v^{(2)}+v)^{(k-5)})^2dx\\
&+2\int ((av^{(2)}+bv^{(1)}+cv)^{(k-3)}+(av^{(2)}+bv^{(1)}+cv)^{(k-5)}) (v^{(2)}+v)^{(k-5)}   dx.
\end{split}
 \end{eqnarray*}
By (\ref{mn}), Lemma~\ref{Q} and Condition~\ref{abc}, there is a constant $K=K(k,u_0,v_0)$ such that
\begin{eqnarray*}
 \label{Bv}
 \begin{split}
&2\int ((av^{(2)}+bv^{(1)}+cv)^{(k-3)}+(av^{(2)}+bv^{(1)}+cv)^{(k-5)})(v^{(k-3)}+v^{(k-5)})dx\leq K e^{-t}.
\end{split}
 \end{eqnarray*}
In addition, by Remark~\ref{vp1},
 \begin{eqnarray}
 \label{v2+vs}
 \begin{split}
\frac{\partial}{\partial t}\int((v_{xx}+v)^{(k-5)})^2dx\leq -6\int ((v^{(2)}+v)^{(k-5)})^2dx+K e^{-t}.\\
\end{split}
\end{eqnarray}
Thus Lemma~\ref{Gronwall} implies that
\begin{eqnarray*}
\label{FV}
\begin{split}
|| (v_{xx}+v)^{(k-5)})||^2_{L^2} \leq & e^{-6t}(||(v_{xx}+v)^{(k-5)}(\cdot,0)||^2_{L^2}+K-Ke^{-t})\leq C e^{-6t} .
\end{split}
\end{eqnarray*} 

\end{proof}

\subsubsection{Higher order variation equations}
In this section, we study the higher order evolution equations. A computation shows that the evolution equation of $w_i$ for $2\leq i\leq m$ satisfies  
\begin{equation}
\begin{split}
\label{ew}
(w_i)_t&=(w_i)_{xx}+w_i+a(x,t) (w_i)_{xx}+b(x,t)(w_i)_x+c(x,t)w_i+d_{i}(x,t)
\end{split}
\end{equation}
where $a$, $b$ and $c$ are defined in (\ref{L}), and
\begin{eqnarray}
\label{di}
\begin{split}
d_2&=(u+u_{xx})U_{2}+(v + v_{xx})V_{1},\\
d_{i}&=(u+u_{xx})U_{i}+(v+v_{xx})V_{i-1}+\sum_{j=2}^{i-1}(w^{(2)}_{j}+w_{j} )V_{i-j},\;\;i\geq 3
\end{split}
\end{eqnarray}
where 
\begin{eqnarray*}
\begin{split}
U_2=&\frac{4 u^2 v^2 + 2 (1 + u^2) v^2 }{(
    1 + u^2 + u_x^2)} - \frac{
    4 u (1 + u^2) v (2 u v + 2 u_x v_x)}{(1 + u^2 + 
      u_x^2)^2}\\ &+ \Big( \frac{
        (2 u v + 2 u_x v_x)^2}{(1 + u^2 + u_x^2)^3} - \frac{
        v^2 +  v_x^2}{(1 + u^2 + u_x^2)^2}\Big)(1+u^2)^2\\
V_1=&\frac{4u (1 + u^2) v}{1 + u^2 + u_x^2} - \frac{(1 + u^2)^2 (2 u v + 2 u_x v_x)}{(1 + u^2 + u_x^2)^2},
\end{split}
\end{eqnarray*}
and $U_i$, $V_{i-j}$ are functions of $u$, $v$, $w_j$ for $2\leq j\leq i-1$ and their first and second partial derivatives in the $x$ direction. They can be derived as follows:
\begin{eqnarray*}
\begin{split}
&U_{i}=\frac{1}{i!}\sum_{j=1}^{i-1}{i \choose j}\Big(\frac{\partial^{i-j}}{\partial
\tau^{i-j}}\bigg|_{\tau=0}(1+h_{\tau}^2)^2\Big)\Big(\frac{\partial^j}{\partial\tau^j}\bigg|_{\tau=0}((1+h_{\tau}^2+((h_{\tau})_x)^{2})^{-1}\Big),\\
&V_{i-j}=\frac{j!}{i!}\frac{\partial^{i-j}}{\partial
\tau^{i-j}}\bigg|_{\tau=0}\frac{(1+h_{\tau}^2)^2}{1+h_{\tau}^2+((h_{\tau})_x)^2}. 
\end{split}
\end{eqnarray*}

\begin{lemma}
\label{Cn}
There is a constant $C=C(k,u_0,v_0,(w_2)_0...,(w_i)_0)$ such that the solution $w_i(x,t)$ satisfies
\begin{equation}
\label{mainboundw}
||\frac{\partial}{\partial t}w_i(\cdot,t)||_{C^{k-1-5i}} \leq Ce^{-t}
\end{equation}
for all $t\geq 0$.
\end{lemma}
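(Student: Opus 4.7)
The plan is to proceed by induction on $i \ge 2$, running the same three-step argument used for $v$ in Lemmas~\ref{claimv}, \ref{Q}, and \ref{vxx+vexp}, but treating the inhomogeneity $d_i$ via the inductive hypothesis. The base case $i=2$ reduces, when $d_2$ is controlled, to exactly the analysis of Lemma~\ref{C1}. The inductive assumption is that, for each $2\le j<i$, the bound (\ref{mainboundw}) holds, $w_j$ is uniformly bounded in $C^{k-5j+3}$, and $||((w_j)_{xx}+w_j)^{(k-5j-1)}||_{L^2}$ decays like $e^{-3t}$. Note the second and third of these are exactly the content of the intermediate Lemmas~\ref{Q} and \ref{vxx+vexp} applied at each stage.

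The first key task is estimating the forcing $d_i$. From (\ref{di}), every term of $d_i$ is a product of a factor of the form $u+u_{xx}$, $v+v_{xx}$, or $w_j+(w_j)_{xx}$ with a coefficient $U_i$ or $V_{i-j}$ that is polynomial in $u,v,w_2,\dots,w_{i-1}$ and their first two $x$-derivatives. By Theorem~\ref{EX} the factor $u+u_{xx}$ decays like $e^{-t}$; by Lemma~\ref{vxx+vexp} and the induction hypothesis, each $v+v_{xx}$ and $w_j+(w_j)_{xx}$ decays exponentially (in $L^2$, with a fixed loss of derivatives); and the $U_i, V_{i-j}$ are uniformly bounded in a sufficiently high $C^\ell$ norm by the induction hypothesis and Condition~\ref{abc}. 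Combining these gives $\|d_i^{(k-5i)}\|_{L^2}\le Ce^{-t}$.

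The next two steps mirror Lemmas~\ref{claimv}--\ref{vxx+vexp}. First, compute $\tfrac{d}{dt}\int(w_i^{(k-5i+3)})^2\,dx$ from (\ref{ew}), integrate by parts on the leading $(w_i)_{xx}$ term, apply Peter-Paul, Condition~\ref{abc}, and the Poincar\'e-type inequality (\ref{wmn}) exactly as in the derivation of (\ref{boundforv}); the extra inhomogeneous term contributes $\int d_i^{(k-5i+3)} w_i^{(k-5i+3)}\,dx\le C_2 e^{-t}(1+\|w_i^{(k-5i+3)}\|_{L^2}^2)$, so Gronwall (Lemma~\ref{Gronwall}) yields a uniform bound on $\|w_i\|_{C^{k-5i+2}}$. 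Second, differentiate $\|((w_i)_{xx}+w_i)^{(k-5i-1)}\|_{L^2}^2$ in time: the dissipative contribution $-2\|((w_i)_{xx}+w_i)^{(k-5i)}\|_{L^2}^2$, combined with Remark~\ref{vp1}, dominates $-8\|((w_i)_{xx}+w_i)^{(k-5i-1)}\|_{L^2}^2$, while the remaining terms decay like $e^{-t}$ by Condition~\ref{abc}, the previous step, and the estimate on $d_i$. A second Gronwall argument then gives $\|((w_i)_{xx}+w_i)^{(k-5i-1)}\|_{L^2}^2\le Ce^{-6t}$. Finally, rewriting (\ref{ew}) as $(w_i)_t=((w_i)_{xx}+w_i)+a(w_i)_{xx}+b(w_i)_x+cw_i+d_i$ and combining the three exponential bounds yields (\ref{mainboundw}) in $C^{k-1-5i}$.

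The principal obstacle is derivative bookkeeping: each induction step costs roughly $5$ derivatives (two to pass between $w_i$ and $(w_i)_{xx}+w_i$, two more for Sobolev embeddings converting $L^2$ control to $C^0$ control via (\ref{ksupl2}), and one of slack to run the Peter-Paul absorption), which is precisely why we fixed $k\ge 5m+1$ at the outset of \S\ref{ffc}. The other delicate point is that $U_i$ and $V_{i-j}$ contain products and quotients of $u,v,w_2,\dots,w_{i-1}$ and their derivatives, so the uniform bounds in the inductive hypothesis must be in a strictly higher $C^\ell$ norm than the bound being proved for $w_i$; the choice $k\ge 5m+1$ is exactly calibrated so that these losses accumulate without exhausting the available regularity before $j=m$.
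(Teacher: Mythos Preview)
Your approach is essentially the same as the paper's: induct on $i$, and at each step establish the three estimates (exponential decay of $d_i$ in $L^2$, uniform $C^\ell$ bound on $w_i$, exponential decay of $\|((w_i)_{xx}+w_i)^{(\ell)}\|_{L^2}$), combining them via equation~(\ref{ew}) to conclude. The paper carries out the base case $i=2$ explicitly as Lemmas~\ref{C0}, \ref{ewl}, \ref{wxx+wt} and then appeals to the same template for $i\ge 3$, just as you do.

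Two bookkeeping points to tighten: first, you estimate $d_i^{(k-5i)}$ in $L^2$ but then invoke $d_i^{(k-5i+3)}$ in the Gronwall step---these should be at the same order (the paper uses $k-5(i-1)$ throughout, and your inductive hypothesis on $(w_j)_{xx}+w_j$ at order $k-5j-1$ does support the higher order, so just make the first claim match). Second, the dissipative coefficient is $-6$, not $-8$: the $-2\|(\cdot)^{(n+1)}\|^2$ term gives $-8$ via Remark~\ref{vp}, but you must add back the $+2\|(\cdot)^{(n)}\|^2$ term from the linear part of~(\ref{ew}). Neither affects the validity of the argument.
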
 

We will show inductively that the following are satisfied.
\begin{enumerate}
\item $||d_{i}^{(k-5(i-1)))}||_{L^2}^2\leq C e^{-6t}$
\item $||w_i(\cdot,t)||_{C^{k-5(i-1)-1}}\leq C_1$
\item $||((w_{i})_{xx}+w_i)^{(k-5i)}||_{L^2}^2\leq C_2e^{-6t}$
\end{enumerate}
for constants $C$, $C_1$ and $C_2$ depending on $k$, and initial values $u_0$, $v_0$ and $(w_\ell)_0$, for all $\ell\leq i$. The Lemma follows immediately from these three inequalities.

We have the following estimate for $d_2$:
\begin{lemma}
\label{C0} There is a constant $C=C(k,u_0,v_0)$ such that 
\begin{equation}
\label{dn}
||d_2^{(k-5)}||_{L^2}^2=||((u_{xx}+u)U_2)^{(k-5)}+((v_{xx}+v)V_1)^{(k-5)}||_{L^2}^2\leq C e^{-6t}.
\end{equation}
\end{lemma}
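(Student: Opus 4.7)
The plan is to split $d_2=(u+u_{xx})U_2+(v+v_{xx})V_1$ and estimate each piece separately using (i) exponential decay of $(u+u_{xx})$ and $(v+v_{xx})$ in the $L^2$-norm after sufficiently many derivatives, together with (ii) uniform control of $U_2$ and extra $e^{-t}$ decay of $V_1$.

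First, I would establish the analog of Lemma~\ref{vxx+vexp} for $u$, namely $||(u+u_{xx})^{(k-5)}||_{L^2}^2\leq Ce^{-6t}$. Rewrite \eqref{h1} as $u_t=(1+A)u_{xx}+(1+A)u$ with $A=\frac{(1+u^2)^2}{1+u^2+u_x^2}-1=O(u^2+u_x^2)$. Theorem~\ref{EX} then gives $||A||_{C^{k-1}}\leq Ce^{-2t}$, so that $u$ itself is a solution of a linearized CSF of the form \eqref{L} whose coefficients $a=A,\ b=0,\ c=A$ satisfy Condition~\ref{abc} (in fact with the sharper exponent $e^{-2t}$). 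The proof of Lemma~\ref{vxx+vexp} then adapts with only cosmetic changes: the Wirtinger inequality of Remark~\ref{vp1} remains valid for $(u+u_{xx})^{(k-5)}$ because $k-5\geq 1$ makes this a derivative (hence of mean zero) and because the Fourier modes of $u+u_{xx}$ at $n=\pm 1$ vanish identically.

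Next, I would bound the coefficient functions $U_2$ and $V_1$. Since $U_2$ is a rational function of $(u,u_x,v,v_x)$ with denominator bounded below, the estimates $||u||_{C^k}\leq 2\epsilon e^{-t}$ (Theorem~\ref{EX}) and $||v||_{C^{k-1}}\leq C$ (Lemma~\ref{Q}) give $||U_2||_{C^{k-2}}\leq C$ uniformly in $t$. Inspection of the explicit formula for $V_1$ shows that $V_1|_{u=u_x=0}=0$, so $V_1$ factors as $u\,P(u,u_x,v,v_x)+u_x\,Q(u,u_x,v,v_x)$ with $P$ and $Q$ uniformly bounded in $C^{k-2}$, whence $||V_1||_{C^{k-2}}\leq Ce^{-t}$.

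Finally, I would combine these ingredients via the Leibniz rule. For the first piece,
\[
((u+u_{xx})U_2)^{(k-5)}=\sum_{i=0}^{k-5}\binom{k-5}{i}(u+u_{xx})^{(i)}\,U_2^{(k-5-i)},
\]
so bounding each summand by $||(u+u_{xx})^{(i)}||_{L^2}\,||U_2^{(k-5-i)}||_{L^\infty}$ and invoking Wirtinger (Remark~\ref{vp1}) to trade lower derivatives of $u+u_{xx}$ against the $(k-5)$-th yields $||((u+u_{xx})U_2)^{(k-5)}||_{L^2}\leq Ce^{-3t}$, whose square is $Ce^{-6t}$. The analogous Leibniz computation for $((v+v_{xx})V_1)^{(k-5)}$, using Lemma~\ref{vxx+vexp} together with the extra $e^{-t}$ factor coming from $V_1$, gives the stronger $||((v+v_{xx})V_1)^{(k-5)}||_{L^2}\leq Ce^{-4t}$. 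Summing via $||f+g||^2\leq 2||f||^2+2||g||^2$ produces the desired bound $||d_2^{(k-5)}||_{L^2}^2\leq Ce^{-6t}$. The main obstacle is the first step: adapting Lemma~\ref{vxx+vexp} to $u$ requires verifying that every error term in the evolution equation for $||(u+u_{xx})^{(k-5)}||_{L^2}^2$ is absorbed by the $e^{-2t}$ decay of $A$ and its derivatives (via Peter--Paul and Gronwall). Once this is in place, the remainder of the argument is routine Leibniz and Cauchy--Schwarz bookkeeping.
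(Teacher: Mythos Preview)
Your argument is correct and follows essentially the same route as the paper: reduce to showing $\|(u_{xx}+u)^{(k-5)}\|_{L^2}^2\leq Ce^{-6t}$ by viewing $u$ as a solution of an equation of the form \eqref{L} with $a=a_0$, $b=0$, $c=a_0$ (the paper writes $c=0$, apparently a slip), then rerun Lemma~\ref{vxx+vexp}. The one difference is that the paper simply bounds $\|V_1\|_{C^{k-5}}$ by a constant and invokes Lemma~\ref{vxx+vexp} directly for the $(v_{xx}+v)V_1$ term, whereas you extract an additional $e^{-t}$ from $V_1$ to get the sharper $Ce^{-4t}$; this refinement is correct but unnecessary for the stated conclusion.
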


\begin{proof}
Because $||U_2||_{C^{k-5}}$ and $||V_1||_{C^{k-5}}$ are both bounded by a constant and we proved in Lemma~\ref{vxx+vexp} that $||(v_{xx}+v)^{(k-5)}||^2_{L^2}\leq Ce^{-6t}$, it is enough to show that 
\begin{equation}
||(u_{xx}+u)^{(k-5)}||^2_{L^2}\leq C e^{-6t}.
\end{equation}
Recall that 
\begin{equation}
\label{Lu}
u_t=\frac{(1+u^2)^2}{1+u^2+u_{x}^2}(u_{xx}+u)\equiv (1+a_0)(u_{xx}+u),
\end{equation}
where $||u||_{C^k}\leq \epsilon_k e^{-t}$ and $||a_0||_{C^k}\leq \epsilon_ke^{-t}$. The result follows by the same argument in Lemma~\ref{vxx+vexp} with $a=a_0$, $b=0$ and $c=0$.
\end{proof}
With the estimate for $d_2$, we are now ready to derive the estimate for $w_2$.
 \begin{lemma}
 \label{ewl}
 There is a constant $C=C(k,u_0,v_0,(w_2)_0)$ such that $w_2$ satisfies $$||w_2||_{C^{k-6}}\leq C$$
 for all $t\geq 0$. 

 \end{lemma}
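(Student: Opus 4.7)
The plan is to mirror the energy-method argument from Lemma~\ref{Q} (the $v$-estimate), treating $w_2$ as the solution of the same linear operator as $v$ but with the extra inhomogeneous forcing $d_2$ present in \eqref{ew}. Since $w_2$ is $2\pi$-periodic with $\int w_2^{(n)}\,dx=0$, inequality \eqref{wmn} together with the reduction in Remark~\ref{supl2} reduces the task to producing a uniform bound on $\eta(t):=\|w_2^{(k-5)}\|_{L^2}^2$ in terms of $k$ and the initial data $u_0$, $v_0$, $(w_2)_0$.

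First I would compute $\frac{d}{dt}\eta$ by differentiating \eqref{ew} $k-5$ times in $x$, pairing with $w_2^{(k-5)}$, and integrating. The contributions from the principal part $(w_2)_{xx}+w_2$ and the three terms $a(w_2)_{xx}$, $b(w_2)_x$, $cw_2$ are handled exactly as in \eqref{v1}--\eqref{boundforv}: integration by parts against the top-order term yields $-2\|w_2^{(k-4)}\|_{L^2}^2+2\|w_2^{(k-5)}\|_{L^2}^2$, the Leibniz error terms involving derivatives of $a,b,c$ are all of size $\epsilon_k e^{-t}$ by Condition~\ref{abc}, and Peter-Paul together with \eqref{wmn} absorb everything into a bound of the form $C_1 e^{-t}\eta(t)$.

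The genuinely new contribution is the forcing cross term $2\int d_2^{(k-5)}\,w_2^{(k-5)}\,dx$. I would handle it by Cauchy-Schwarz with a $t$-dependent Peter-Paul weight,
\begin{equation*}
2\int d_2^{(k-5)}\,w_2^{(k-5)}\,dx\leq e^{-t}\eta(t)+e^{t}\|d_2^{(k-5)}\|_{L^2}^2\leq e^{-t}\eta(t)+Ce^{-5t},
\end{equation*}
where the second inequality uses Lemma~\ref{C0}. Combining, one obtains the differential inequality $\eta'(t)\leq (C_1+1)e^{-t}\eta(t)+Ce^{-5t}$; both the coefficient and the inhomogeneous term are integrable on $[0,\infty)$, so Gronwall (Lemma~\ref{Gronwall}) yields $\eta(t)\leq e^{C_1+1}\bigl(\eta(0)+C/5\bigr)$, a bound of the required form. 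Applying Remark~\ref{supl2} then upgrades this $L^2$ estimate on $w_2^{(k-5)}$ to the claimed $C^{k-6}$ bound on $w_2$.

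The main obstacle is that, unlike the purely homogeneous equation for $v$, the equation for $w_2$ carries the inhomogeneous forcing $d_2$, which blocks a direct reduction to an estimate of the clean shape $\eta'\leq Ce^{-t}\eta$. The device that circumvents this is the weighted Peter-Paul splitting above: tuning the weight so that the coefficient of $\eta(t)$ remains integrable on $[0,\infty)$ while the forcing is pushed down to $e^{-5t}$ is exactly what allows Gronwall to deliver a uniform bound rather than exponential growth.
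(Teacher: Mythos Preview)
Your proposal is correct and follows essentially the same approach as the paper: reduce to an $L^2$ bound on $w_2^{(k-5)}$, reuse the energy estimate from Lemma~\ref{claimv}/\ref{Q} for the homogeneous part, handle the new cross term $2\int d_2^{(k-5)}w_2^{(k-5)}$ by a weighted Peter--Paul splitting using Lemma~\ref{C0}, and close with Gronwall. The only cosmetic difference is the choice of Peter--Paul weight---the paper uses $e^{\pm 2t}$ (giving $Ce^{-4t}+e^{-2t}\eta$) whereas you use $e^{\pm t}$ (giving $Ce^{-5t}+e^{-t}\eta$)---but either choice yields an integrable coefficient and forcing, so Gronwall applies equally well.
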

\begin{proof}
Recall
\begin{eqnarray*}
\begin{split}
(w_2)_t&=(w_2)_{xx}+w_2+a(x,t) (w_2)_{xx}+b(x,t)(w_2)_x+c(x,t)w_2+d_{2}(x,t).
\end{split}
\end{eqnarray*}
Note that $a$, $b$ and $c$ satisfy the Condition~\ref{abc} and $w_2$ satisfies (\ref{wmn}). Therefore, one can use the same method as in Lemma~\ref{Q} to derive $L^2$ estimates for $(w_2)_t-d_2$. This gives
\begin{eqnarray}
\label{ww}
\begin{split}
\frac{\partial}{\partial t}\|w_2^{(k-5)}\|^2_{L^2}\leq& Ke^{-t}||w_2^{(k-5)}||_{L^2}^2+2\int d_2^{(k-5)}w_2^{(k-5)}, 
\end{split}
\end{eqnarray} 
where $K=K(k,u_0,v_0,(w_2)_0,...,(w_i)_0)$.

By (\ref{dn}) and Peter-Paul inequality,
\begin{eqnarray}
\begin{split}
\int d_2^{(k-5)}w_2^{(k-5)} &\leq e^{2t}\int( d_2^{(k-5)})^2+e^{-2t} \int (w_2^{(k-5)})^2\\
&\leq Ce^{-4t}+e^{-2t}||w_2^{(k-5)}||_{L^2}^2.
\end{split}
\end{eqnarray}
Hence
 \begin{equation}
\label{forw}
 \begin{split}
\frac{\partial}{\partial t}\|w_2^{(k-5)}\|^2_{L^2}
\leq& K e^{-t}||w_2^{(k-5)}||_{L^2}^2+2Ce^{-4t}+2e^{-2t}||w_2^{(k-5)}||_{L^2}^2.
\end{split}
\end{equation}
Lemma~\ref{ewl} follows by Lemma~\ref{Gronwall}.  

\end{proof}

Next, we repeat the same argument as in Lemma~\ref{vxx+vexp} to derive the following Lemma. The only difference is that there is an extra term contributed by $d_2$.

\begin{lemma}
There is a constant $C=C(k,u_0,v_0,(w_2)_0)$ such that
\label{wxx+wt}
\begin{eqnarray}
\label{wieq}
\begin{split}
||((w_{2})_{xx}+w_{2})^{(k-10)}||^2_{L^2}\leq& Ce^{-6t},
\end{split}
\end{eqnarray}
for all $t\geq 0$.
\end{lemma}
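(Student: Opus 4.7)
The plan is to adapt the computation from Lemma~\ref{vxx+vexp}, replacing $v$ by $w_2$ and tracking the additional inhomogeneous term $d_2$ from~\eqref{ew}. Setting $F := (w_2)_{xx}+w_2$ and using $F_t = ((w_2)_t)_{xx}+(w_2)_t$ together with~\eqref{ew}, I perform a $(k-10)$-fold spatial differentiation and integrate by parts to obtain
\begin{equation*}
\begin{split}
\frac{d}{dt}\int (F^{(k-10)})^2\,dx = \;& -2\int (F^{(k-9)})^2\,dx + 2\int (F^{(k-10)})^2\,dx \\
&+ 2\int \bigl(G^{(k-8)}+G^{(k-10)}\bigr) F^{(k-10)}\,dx \\
&+ 2\int \bigl(d_2^{(k-8)}+d_2^{(k-10)}\bigr) F^{(k-10)}\,dx,
\end{split}
\end{equation*}
where $G := a(w_2)_{xx}+b(w_2)_x+cw_2$. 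The first two terms are handled exactly as in Lemma~\ref{vxx+vexp}: since $F$ is smooth and $2\pi$-periodic with derivatives of vanishing mean, Remark~\ref{vp} gives $\int (F^{(k-10)})^2 \leq \frac{1}{4} \int (F^{(k-9)})^2$, so they combine to at most $-6\int (F^{(k-10)})^2\,dx$.

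For the perturbation integral, I use Condition~\ref{abc} ($||a||_{C^k},||b||_{C^k},||c||_{C^k}\leq \epsilon_k e^{-t}$) together with the uniform bound $||w_2||_{C^{k-6}}\leq C$ from Lemma~\ref{ewl}; the Leibniz rule and Cauchy--Schwarz bound this contribution by $K_1 e^{-t}$ for some $K_1 = K_1(k, u_0, v_0, (w_2)_0)$, exactly as in the $v$-case.

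The new ingredient is the $d_2$-contribution. Cauchy--Schwarz gives
\begin{equation*}
\Bigl|\, 2\int \bigl(d_2^{(k-8)}+d_2^{(k-10)}\bigr) F^{(k-10)}\,dx \,\Bigr| \leq C\bigl( ||d_2^{(k-8)}||_{L^2} + ||d_2^{(k-10)}||_{L^2} \bigr) ||F^{(k-10)}||_{L^2}.
\end{equation*}
Lemma~\ref{C0} supplies $||d_2^{(k-5)}||_{L^2}^2 \leq Ce^{-6t}$, and since any derivative of $d_2$ has vanishing mean on $[0,2\pi]$, repeated application of Wirtinger's inequality propagates this bound downward to $d_2^{(k-8)}$ and $d_2^{(k-10)}$. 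As $||F^{(k-10)}||_{L^2}$ is uniformly bounded by Lemma~\ref{ewl}, this term contributes at most $K_2 e^{-3t}$. Collecting everything yields the differential inequality
\begin{equation*}
\frac{d}{dt}\int (F^{(k-10)})^2\,dx \leq -6\int (F^{(k-10)})^2\,dx + K e^{-t},
\end{equation*}
from which Gronwall's inequality (Lemma~\ref{Gronwall}) delivers $||F^{(k-10)}||_{L^2}^2 \leq Ce^{-6t}$. The main delicate point is the $d_2$-step: Lemma~\ref{C0} furnishes an $L^2$ bound only on $d_2^{(k-5)}$, so I must propagate it to the lower-order $d_2^{(k-8)}$ and $d_2^{(k-10)}$ via Wirtinger applied in the ``downward'' direction, verifying the mean-zero hypothesis at each step; otherwise the argument is a verbatim translation of the proof of Lemma~\ref{vxx+vexp}.
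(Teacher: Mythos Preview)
Your argument is essentially identical to the paper's: both apply the computation of Lemma~\ref{vxx+vexp} to $(w_2)_t-d_2$ (dropping to order $k-10$ because only $\|w_2\|_{C^{k-6}}$ is available from Lemma~\ref{ewl}), then estimate the extra cross term $\int F^{(k-10)}(d_2^{(k-8)}+d_2^{(k-10)})\,dx$ by $Ce^{-3t}$ via Lemma~\ref{C0} and Lemma~\ref{ewl}, and finish with Gronwall. Your explicit mention of Wirtinger to pass from $\|d_2^{(k-5)}\|_{L^2}$ down to $\|d_2^{(k-8)}\|_{L^2}$ and $\|d_2^{(k-10)}\|_{L^2}$ is exactly the step the paper leaves implicit when it cites ``(\ref{dn}) and Lemma~\ref{ewl}.''
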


\begin{proof}
We can apply the same estimate derived in (\ref{v2+vs}) to $(w_2)_t-d_2$. In Lemma~\ref{vxx+vexp}, we used that $||v||_{C^{k-1}}$ is bounded by a constant. In this case, we have $||w_2||_{C^{k-6}}<C$. Hence, we get an estimate only up to order $k-10$.
\begin{eqnarray*}
\label{ewm}
\begin{split}
&\frac{\partial}{\partial t}\int(((w_{2})_{xx}+w_{2})^{(k-10)})^2 dx\\
\leq& -6\int((w_{2})_{xx}+w_{2})^{(k-10)})^2 dx+Ke^{-t}\\
&+2\int((w_{2})_{xx}+w_{2})^{(k-10)}(d_2^{(2)}+d_2)^{(k-10)}dx.
\end{split}
\end{eqnarray*}
From (\ref{dn}) and Lemma~\ref{ewl},
\begin{eqnarray}
\int((w_{2})_{xx}+w_{2})^{(k-10)}(d_2^{(2)}+d_2)^{(k-10)}dx \leq Ce^{-3t}.
\end{eqnarray}
Hence
\begin{equation}
\label{w3}
\begin{split}
\frac{\partial}{\partial t}\int(((w_{2})_{xx}+w_{2})^{(k-10)})^2 dx\leq-6\int((w_{2})_{xx}+w_{2})^{(k-10)})^2 dx+Ce^{-3t}+Ke^{-t}.
\end{split}
\end{equation}
The Lemma follows by Lemma~\ref{Gronwall}.
\end{proof}

\begin{proof}[Proof of Lemma~\ref{Cn}]
When $i=2$ we showed
\begin{enumerate}
\item $||d_{2}^{(k-5)}||_{L^2}^2\leq C e^{-6t}$
\item $||w_2(\cdot,t)||_{C^{k-6}}\leq C_1$
\item $||((w_{2})_{xx}+w_2)^{(k-10)}||_{L^2}^2\leq C_2e^{-6t}$
\end{enumerate}
in Lemma~\ref{C0}, Lemma~\ref{ewl}, and Lemma~\ref{wxx+wt} respectively.
Together with Condition~\ref{abc}, we conclude that there is a constant $C=C(k,u_0,v_0,(w_2)_0...,(w_i)_0)$ such that
\begin{equation}
\label{mainboundw}
||\frac{\partial}{\partial t}w_2(\cdot,t)||_{C^{k-11}} \leq C e^{-t}
\end{equation}
for all $t\geq 0$.
From (\ref{di}), and Lemma~\ref{wxx+wt} we have
\begin{equation}
\label{d2}
||d_3^{(k-10)}||_{L^2}^2\leq Ce^{-6t},
\end{equation}
for all $t\geq 0$.
Inductively, for every $3\leq i\leq m$, one may apply the same arguments in Lemma~\ref{ewl} and in Lemma~\ref{wxx+wt} to get the result.
\end{proof}
\subsubsection{Proof of Lemma~\ref{EX1}}

\begin{proof}
For any integer $m\geq 0$, we let $k=5m+1$. By Theorem~\ref{EX}, Lemma~\ref{C1} and Lemma~\ref{Cn}, we have that $\partial_\tau^i H^t$ converges uniformly exponentially in the $C^{0}$ norm for all $0\leq i\leq m$. 
\end{proof}

\subsection{A family of smooth projective planes}
\label{homotopy}
In this section, we will use CSF to construct a family of smooth projective planes.
We first prove a proposition which provides a sufficient condition for a family of curves to define a smooth projective plane. 
We begin with a tuple $(\P,\L,\F,\pi_{\P},\pi_{\L})$ which satisfies the following: $(\P,\L,\F)$ is a projective space.
The set of points $\P$, and the set of lines $\L$ are closed smooth manifolds of dimension $2$, and the flag space $\F\subset \P\times\L$ is a closed smooth manifold of dimension $3$. We call $p\in\P$ on a line $\ell\in\L$ if $(p,\ell)\in F$ and we define the projections by $\pi_\P:\F\to\P: (p,\ell) \mapsto p$ and $\pi_\L:\F\to\L: (p,\ell)\mapsto \ell$. 

Suppose $\pi_\L$ is a submersion and $\pi_\P$ is a smooth map. For any $\ell\in\L$, we denote $\hat{\ell}=\pi^{-1}_\L(\ell)=\{(p,\ell)|p\in P_\ell\}\subset \F$ and $\bar{\ell}=\pi_\P(\hat{\ell})\subset \P$. Note that a nonzero tangent vector $\xi\in \T_\ell\L$ corresponds to a ``variation of $\hat{\ell}$" in $\F$. To see this, consider a smooth path $s\mapsto \ell_s\in\L$ with $\ell_0=\ell$ and $\frac{\partial}{\partial s} \ell_s=\xi$; then $ \pi_\L^{-1}(\ell_s)=\hat{\ell_s} \subset \F$ is a family of curves depending on the parameter $s$. Using the fact that $\pi_\L$ is a smooth fiber bundle, we may use a trivialization of $\pi_\L$ near $\ell$ to obtain a smooth map $\tau_{\ell,\ell'}:\hat{\ell}\to\hat{\ell'}$ which depends smoothly on $\ell$, $\ell'\in\L$. Hence, $\displaystyle \hat{\ell} \xrightarrow{\tau_{\ell,\ell_s}} \hat{\ell_s} \xrightarrow{\pi_\P}\P$ determines a family of smooth curves in $\P$ depending on the parameter $s$; denote it by $\bar{\ell}_s$. Suppose the map $\pi_\P$ satisfies that for every two distinct $\ell_1$, $\ell_2\in\L$, the lines $\bar{\ell}_1$, $\bar{\ell}_2\subset\P$ intersect exactly once, and transversely. Assume further that for every $\ell\in\L$, the restriction of $\pi_\P$ to $\hat{\ell}\subset \F$ is a smooth embedding. Then for any vector $\xi\in T_\ell\L$, we call $\bar{\xi}=\frac{\partial}{\partial s}|_{s=0}\bar{\ell}_s$ the {\bf corresponding variation} of $\bar{\ell}$. 

\begin{proposition}
\label{subvec}
For any tuple $(\P,\L,\F,\pi_{\P},\pi_{\L})$ defined as above, suppose for every $\ell\in\L$ and every nonzero vector $\xi\in T_\ell\L$, the normal component of the corresponding variation $\bar{\xi}$ of $\bar{\ell}$ has precisely one zero, which is transverse. Then $\pi_\P$ is a submersion. 
\end{proposition}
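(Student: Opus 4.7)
The plan is to argue by contradiction: assume that $\pi_\P$ fails to be a submersion at some point $(p,\ell)\in\F$ and derive a contradiction by exploiting the two-dimensionality of $T_\ell\L$ together with the transverse-zero hypothesis.

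The first and most important step is to package the construction $\xi\mapsto\bar\xi$ as a well-defined linear map
$$\Psi_{p,\ell}:T_\ell\L\longrightarrow T_p\P/T_p\bar\ell,\qquad \xi\longmapsto \bar\xi(p)\bmod T_p\bar\ell.$$
Concretely, since $\pi_\L$ is a submersion, each $\xi\in T_\ell\L$ has a lift $\tilde\xi\in T_{(p,\ell)}\F$ with $d\pi_\L(\tilde\xi)=\xi$; inspecting the definition of $\bar\ell_s$ via a trivialization of $\pi_\L$ shows that $d\pi_\P(\tilde\xi)$ represents $\bar\xi(p)$. Any two lifts differ by a vector in $T_{(p,\ell)}\hat\ell$, and since $\pi_\P|_{\hat\ell}$ is an embedding onto $\bar\ell$, such a vector maps under $d\pi_\P$ into $T_p\bar\ell$. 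Thus $\Psi_{p,\ell}$ is independent of the choice of lift and of trivialization, and is in fact the composition
$$T_\ell\L\xrightarrow{(d\pi_\L)^{-1}} T_{(p,\ell)}\F/T_{(p,\ell)}\hat\ell\xrightarrow{d\pi_\P} T_p\P/T_p\bar\ell,$$
which is manifestly linear.

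The second step is a reformulation of submersivity. Because $d\pi_\P(T_{(p,\ell)}\hat\ell)=T_p\bar\ell$, the image of $d\pi_\P$ at $(p,\ell)$ always contains $T_p\bar\ell$; hence $d\pi_\P$ fails to be surjective at $(p,\ell)$ if and only if $d\pi_\P(T_{(p,\ell)}\F)=T_p\bar\ell$, which by the factorization above is equivalent to $\Psi_{p,\ell}\equiv 0$. The final step then uses $\dim T_\ell\L=2$ to exhibit a contradiction: assume $\Psi_{p,\ell}\equiv 0$ and pick a basis $\xi_1,\xi_2$ of $T_\ell\L$. By hypothesis, each section $q\mapsto\Psi_{q,\ell}(\xi_i)$ of the normal line bundle of $\bar\ell$ in $\P$ has a unique zero, which is transverse; the vanishing at $p$ forces this zero to be $p$. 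Trivializing the normal bundle near $p$, let $f_i$ be the smooth function on $\bar\ell$ representing $\Psi(\xi_i)$, so $f_i(p)=0$ and $f_i'(p)\neq 0$. Then $\xi:=f_2'(p)\xi_1-f_1'(p)\xi_2$ is a nonzero element of $T_\ell\L$, yet by linearity $\Psi(\xi)$ is represented by $f_2'(p)f_1-f_1'(p)f_2$, which vanishes to order at least $2$ at $p$, contradicting the required transversality of its zero.

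The main obstacle I anticipate is step one: making $\Psi$ manifestly linear and well-defined requires carefully checking that the ambiguities in the choices of path $\ell_s$, of trivialization $\tau_{\ell,\ell_s}$, and of lift $\tilde\xi$ all wash out modulo $T_p\bar\ell$. Once $\Psi$ is in place the contradiction reduces to an elementary linear-algebra observation on a two-dimensional space.
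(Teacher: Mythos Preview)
Your proof is correct and shares its overall architecture with the paper's: both introduce the linear map $\Psi_{p,\ell}:T_\ell\L\to T_p\P/T_p\bar\ell$ (the paper writes $\phi_p:T_\ell\L\to N_p\bar\ell$), observe that $d\pi_\P$ is onto at $(p,\ell)$ if and only if this map is not identically zero, and argue by contradiction assuming $\Psi_{p,\ell}\equiv 0$. The genuine divergence is in the final step. The paper picks a \emph{second} point $q\neq p$ on $\bar\ell$; since $\dim T_\ell\L=2>1=\dim N_q\bar\ell$, the map $\phi_q$ has nontrivial kernel, so there is a nonzero $\xi'$ with $\phi_q(\xi')=0$, and because $\phi_p\equiv 0$ the normal component of $\bar\xi'$ vanishes at both $p$ and $q$, contradicting the ``precisely one zero'' clause of the hypothesis. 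You instead stay at $p$: both basis sections $f_1,f_2$ are forced to have their unique zero at $p$ with $f_i'(p)\neq 0$, so the combination $f_2'(p)\xi_1-f_1'(p)\xi_2$ is a nonzero element of $T_\ell\L$ whose associated normal section vanishes to second order at $p$, contradicting the ``transverse'' clause. Each route exploits a different half of the hypothesis; the paper's is marginally shorter, while yours avoids introducing an auxiliary point and is preceded by a cleaner account of why $\Psi_{p,\ell}$ is well-defined and linear independently of the choice of lift and trivialization---a point the paper leaves implicit.
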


\begin{proof}
Fix $\ell\in\L$, for any $p$ on $\bar{\ell}$, the tangent space $T_p\P$ is two dimensional and one can decompose it into $T_p\bar{\ell}\oplus N_p\bar{\ell}$, where $T_p\bar{\ell}$ is a space of dimension one that is tangent to $\bar{\ell}$ at $p$ and $N_p\bar{\ell}=(T_p\bar{\ell})^{\perp}$ is its orthogonal complement. We will show that $\pi_\P$ is a submersion by showing its differential is surjective onto both subspaces.

Define a map $\phi_p:T_\ell\L\to N_p \bar{\ell}$ such that any vector $\xi\in T_\ell\L$ is mapped to the normal component of corresponding variation field $\bar{\xi}$ at $p$ on $\bar{\ell}$. We will show that the map $\phi_p$ is onto. Suppose not, since $\dim N_p\bar{\ell}=1$, $\phi_p$ has to be a zero map. Choose another point $q\neq p$ on $\bar{\ell}$ and define the decomposition of the tangent space $T_q\P$ and the map $\phi_q:T_\ell\L\to N_q \bar{\ell}$ as we did for the point $p$. Since $\dim(T_\ell\L)=2>1=\dim(N_q\bar{\ell})$, the kernel of the map $\phi_q$ must be at least one dimensional. Hence there is a vector $\xi'\in T_\ell\L$ such that the normal component of the corresponding vector field $\bar{\xi}'$ along $\bar{\ell}$ vanishes at $q$, that is $\phi_q(\xi')=0$. On the other hand, since $\phi_p$ is a zero map, $\phi_p(\xi')=0$. So, we found a nonzero vector $\xi'\in T_\ell\L$ such that the normal component of the corresponding vector field $\bar{\xi}'$ along $\bar{\ell}$ has two transverse zeros, $p$ and $q$. This contradicts with our assumption that there is precisely one transverse zero, therefore $\phi_p$ is onto. In addition, since $\pi_\P|_{\hat{\ell}}$ is a smooth embedding, a nonzero tangent $\hat{v_p}\in T_{(p,\ell)}\hat{\ell}$ is mapped to a nonzero tangent $v_p\in T_p\bar{\ell}$. We therefore conclude that $\pi_\P$ is a submersion.
\end{proof}

Our next goal is to show that the CSF gives rise to a smooth homotopy, when applied to an arbitrary smooth projective plane. We will prove in a later section that this is in fact a smooth homotopy of smooth projective planes from the initial smooth projective plane to the standard $\mathbb{RP}^2$. In the sequel, we let the tuple $(\P,\L,\F,\pi^0_{\P},\pi_\L)$ be a smooth projective plane. Note that $\P$, $\L$ are diffeomorphic to $\mathbb{RP}^2$. We endow $\P$ with a Riemannian metric of constant curvature $1$, making it isometric to $\mathbb{RP}^2$ with its usual metric. We use CSF to deform $\pi^0_\P$ through a family of smooth maps by defining 
\begin{equation}
\label{pipt}
\pi_\P^t:\F\to\P
\end{equation}
to be the unique map with the property that $\pi_\P^t|_{\hat{\ell}}$ is the result of applying the CSF to $\xymatrix{\hat{\ell}\ar[r]^{\pi_\P} &\P}$ for time $t$. The solutions to the CSF exist for any time and depend smoothly on the initial conditions \cite{Huisken96}. Since $\pi_\P^0$ is smooth, this gives a smooth map $\phi:\F\times[0,\infty)\to\P$ by setting $\phi(-,t)=\pi_\P^t$. The next Proposition shows that $\lim_{t\to\infty}\pi_\P^t$ exists and is smooth, and $\phi$ induces a smooth homotopy from $\pi_\P^0$ to this limit map. 
 
 \begin{proposition}
 \label{submersion}
Let $(\P,\L,\F,\pi^0_{\P},\pi_\L)$  and $\{\pi_\P^t:\F\to\P\}_{t\in[0,\infty)}$ be as above. Then $\pi_\P^{\infty}:=\lim_{t\to\infty}\pi_\P^t$ exists and is smooth. Moreover, we can find a reparametrization $\alpha:[0,1]\to[0,\infty]$ so that $\Phi:=\phi\circ\alpha:\F\times[0,1]\to\P$ is a smooth homotopy from $\pi_\P^0$ to $\pi_\P^{\infty}$.
\end{proposition}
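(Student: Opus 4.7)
The plan is to show that $\pi_\P^t$ converges as $t\to\infty$, in every $C^N(\F)$ norm and with a uniform exponential rate, to a smooth limit $\pi_\P^\infty:\F\to\P$, and then to obtain the smooth homotopy on $[0,1]$ by a sufficiently fast time reparametrization. For each $\ell\in\L$, Remark~\ref{nullhomo} says $\pi_\P^0|_{\hat\ell}$ is not null-homotopic in $\P\cong\mathbb{RP}^2$, so its lift $\tilde\gamma_\ell$ to $\mathbb{S}^2$ is a smooth simple closed antipodally-invariant curve bisecting the surface area. By Grayson's Theorem~\ref{gray}, CSF on $\tilde\gamma_\ell$ exists for all $t$ and $\kappa\to 0$ in $C^\infty$, so for each $\ell$ there is some time at which $\pi_\P^t|_{\hat\ell}$ is $C^{k+2}$ $\delta$-close to some great circle, with $\delta$ and $k$ chosen to apply Theorems~\ref{EX} and~\ref{EX2}. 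Continuous dependence of CSF on initial data together with compactness of $\L$ upgrades this to a single time $T_0$ that works for every $\ell$, and Theorem~\ref{EX} then provides a continuous assignment $\ell\mapsto\gamma_g(\ell)$ to the limiting great circle of $\pi_\P^t|_{\hat\ell}$.

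After shifting the time origin to $T_0$, cover $\L$ by finitely many coordinate patches $\{U_\beta\}$, each small enough that the circles $\gamma_g(\ell)$ all lie in a tubular neighborhood of one fixed reference geodesic for $\ell\in U_\beta$. Trivializing $\pi_\L:\F\to\L$ over each patch represents the relevant slice of $\pi_\P^t$ as a smooth family $H^t:\mathbb{S}^1\times D\to(-r,r)$ of graphs satisfying the hypotheses of Theorem~\ref{EX2}; hence every mixed derivative $\partial_x^a\partial_\xi^b H^t$ converges uniformly as $t\to\infty$, with the exponential rate from Theorem~\ref{EX} and Lemma~\ref{EX1}. Gluing the patches gives a smooth $\pi_\P^\infty$ with $\|\pi_\P^t-\pi_\P^\infty\|_{C^N(\F)}\leq C_N e^{-t}$ for every $N$. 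For the time derivatives we use that $\partial_t\phi=\kappa N$ and Lemma~\ref{curvedecaythm} yield $\|\partial_t\phi(\cdot,t)\|_{C^N(\F)}\leq C_N e^{-t}$; differentiating the evolution PDE (\ref{h}) in $t$ expresses $\partial_t^{j+1}\phi$ as a polynomial in quantities already bounded this way, so induction on $j$ gives $\|\partial_t^j\phi(\cdot,t)\|_{C^N(\F)}\leq C_{j,N}e^{-t}$ for every $j,N$.

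Finally, let $\alpha:[0,1]\to[0,\infty]$ be given by $\alpha(s)=s/(1-s)$ for $s<1$ and $\alpha(1)=\infty$, and define $\Phi(x,s)=\phi(x,\alpha(s))$ for $s<1$, $\Phi(x,1)=\pi_\P^\infty(x)$. The map $\Phi$ is smooth on $\F\times[0,1)$ as a composition of smooth maps, and $\Phi(x,0)=\phi(x,0)=\pi_\P^0(x)$. By Fa\`a di Bruno, every mixed $\F$- and $s$-derivative of $\Phi$ at $s<1$ is a finite sum of products of some $\partial_t^j\phi$ with factors $\alpha^{(k_1)},\dots,\alpha^{(k_j)}$. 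Since each $\alpha^{(i)}(s)=O\bigl((1-s)^{-i-1}\bigr)$ grows only polynomially as $s\to 1^-$, while $\partial_t^j\phi=O(e^{-\alpha(s)})=O(e^{-s/(1-s)})$ decays faster than any power of $(1-s)$, every such derivative tends to $0$ as $s\to 1^-$. Therefore $\Phi$ extends smoothly to $\F\times[0,1]$ with all $s$-derivatives at $s=1$ equal to zero, yielding the required smooth homotopy from $\pi_\P^0$ to $\pi_\P^\infty$.

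The main technical obstacle is the passage from the line-by-line convergence produced by Grayson's theorem to smooth convergence of the whole family, uniformly over the parameter space $\L$; this is exactly what the quantitative estimates of \S\ref{ffc} are designed to achieve, once compactness of $\L$ and continuous dependence for CSF are invoked to place every curve in the family inside the basin of Theorem~\ref{EX} after a common time $T_0$. Once this smooth exponential convergence and the analogous decay for time derivatives are in hand, the reparametrization step is a routine application of Fa\`a di Bruno together with the fact that $e^{-s/(1-s)}$ is flat to infinite order at $s=1$.
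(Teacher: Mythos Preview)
Your argument for the existence and smoothness of $\pi_\P^\infty$ follows the paper's route: use compactness of $\L$ and smooth dependence of CSF on initial data to bring every curve into the regime of Theorem~\ref{EX} by a common time $T_0$, then apply Theorem~\ref{EX2} and Lemma~\ref{analysis1} to upgrade fiberwise convergence to $C^\infty$ convergence over $\F$. One small attribution issue: the bound $\|\partial_t\phi(\cdot,t)\|_{C^N(\F)}\le C_N e^{-t}$ does not follow from Lemma~\ref{curvedecaythm} alone, since that lemma controls $\kappa$ only along a single curve; control of the $\L$-derivatives of $\partial_t\phi$ is exactly the content of Lemmas~\ref{C1} and~\ref{Cn} (packaged in Lemma~\ref{EX1}). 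You effectively cite these earlier, so this is only a matter of pointing to the right lemma.

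Where you genuinely diverge from the paper is in the reparametrization step. The paper treats $\Phi_0(s)=\phi(\cdot,s/(1-s))$ as a continuous path in the Fr\'echet space $C^\infty(\F,\P)$ and, using only continuity at $s=1$, manufactures an auxiliary reparametrization $\beta$ via a dominating function $\rho$ so that $\Phi_0\circ\beta$ is flat at the endpoint; this is a soft argument that would work for any continuous path, regardless of rate. You instead exploit the quantitative information already available: the exponential decay of $\partial_t\phi$ in every $C^N(\F)$, bootstrapped to all higher time derivatives by differentiating the graph PDE~\eqref{h}, combined with the merely polynomial blow-up of $\alpha^{(k)}(s)$, shows directly via Fa\`a di Bruno that $\Phi_0$ itself is already smooth and flat at $s=1$, so no extra $\beta$ is needed. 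Your inductive step for $\partial_t^{j+1}\phi$ is correct (each term in the expansion of $\partial_t^j$ applied to the right-hand side of~\eqref{h} carries at least one factor $\partial_t^iH$, $i\ge1$, which decays like $e^{-t}$; the remaining factors are bounded because $H$ stays bounded in every $C^N$), though it costs a fixed number of spatial derivatives per time derivative, which is harmless since $N$ is arbitrary. Your approach is more direct and makes clear that the paper's explicit construction of $\beta$ is unnecessary given the estimates of \S\ref{ltb}--\S\ref{ffc}; the paper's approach, on the other hand, has the virtue of decoupling the reparametrization from any rate and would survive even if only qualitative $C^\infty$ convergence were known.
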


\begin{proof}
Since $\pi_\L$ is a submersion between compact manifolds, it is a smooth locally trivial fibration. For every $\ell\in\L$, $\pi_\L^{-1}(\ell)$ is diffeomorphic to $\mathbb{S}^1$. Let $\Lambda\subset\L$ be an open set contains $\ell$, then $\pi_\L^{-1}(\Lambda)\cong \mathbb{S}^1\times\Lambda\subset \F$.  Recall that lines of smooth projective plane are not null-homotopic (Remark~\ref{nullhomo}) and any curve that is not null-homotopic in $\P$ (diffeomorphic to $\mathbb{RP}^2$) lifts to an area-bisecting curve in $\mathbb{S}^2$. By Lemma~\ref{EX}, for each $\ell\in \Lambda$, the lift of $\pi_\P^t|_{\mathbb{S}^1\times\{\ell\}}$ converges in the $C^k$ norm to a parametrization of a great circle as $t\to\infty$, with the convergence uniform in $\ell$. The uniform convergence in $\ell$ implies that $\pi_\P^t$ converges uniformly (i.e. in the $C^0$ topology) to $\pi_\P^{\infty}$ as $t\to\infty$. Theorem~\ref{EX2} implies that for every $x\in \mathbb{S}^1$, and every $k\in \N$, the $k$-jet of $\pi_\P^t |_{\{x\}\times \Lambda}$ at any point $(x,\lambda)\in \mathbb{S}^1\times\Lambda$ converges uniformly in $\lambda$ as $t\to \infty$. By Lemma~\ref{analysis1}, we have that for any $k\geq 0$ the family of maps $\pi_\P^t|_{\pi_\L^{-1}(\Lambda)}$ converges to $\pi_\P^{\infty}|_{\pi_\L^{-1}(\Lambda)}$ in the $C^k$ topology as $t\to\infty$, so that the latter is smooth. Since $\ell\in\L$ was arbitrary, $\pi_\P^{\infty}$ is smooth. 

Let $\Phi_0:\F\times[0,1]\to\P$ be defined by $\Phi_0(-,s)=\phi(-,\frac{s}{1-s})$. Then $\Phi_0$ is a continuous homotopy from $\pi_\P^0$ to $\pi_\P^\infty$, but not necessarily a smooth one, since its left derivatives at $s=1$ need not exist. Our goal is to find a continuous reparametrization $\beta:[0,1]\to[0,1]$ so that $\Phi:=\Phi_0\circ\beta$ is the desired smooth homotopy from $\pi_\P^0$ to $\pi_\P^{\infty}$. Clearly $\beta$ will be  smooth on $[0,1)$ but not necessarily at 1.     

In what follows, we will think of $\Phi_0$ as a map from $[0,1]$ to $C^{\infty}(\F,\P)$. Further, using a smooth embedding of $\P$ into $\mathbb{R}^4$, we realize $C^\infty(\F,\P)$ as a subset of $C^{\infty}(\F,\R^4)$, which is a Fr\'{e}chet space with a Fr\'{e}chet metric, say $|\cdot|_\F$. The $C^\infty$ convergence of $\pi_\P^t$ to $\pi_\P^\infty$ (proved above) is equivalent to the continuity of $\Phi_0(s)$ at $s=1$, and the smooth dependence of $\pi_\P^t$ on $t\in[0,\infty)$ implies that $\Phi_0(s)$ is smooth on $[0,1)$, in the sense of G\^ateaux. Our goal is to find a continuous reparametrization $\beta:[0,1]\to[0,1]$, which is smooth on $[0,1)$, so that $\Phi_0(\beta(s))$ is also smooth at $s=1$, i.e. its left (G\^ateaux) derivatives at $s=1$ exist to all orders.

Clearly, it is enough to find a continuous reparametrization $\beta:[0,1]\to[0,1]$, smooth except at 1, so that $|\Phi_0(\beta(1)) - \Phi_0(\beta(s))|_\F = o( (1-s)^k)$ as $s \to 1-$, for all $k \in \N$; indeed, $\Phi_0(\beta(s))$ would then not just be smooth but also flat at $s=1$, i.e. all its left derivatives would be $0$. We construct such a $\beta$ as follows:

First, find a continuous strictly decreasing function $\rho$ on [0,1], such that $\rho$ is smooth on [0,1), $\rho(s) > |\Phi_0(1) - \Phi_0(s)|_\F$ for $0 \leq s < 1$, and $\rho(1) = 0$. Such a $\rho$ can be constructed by first assigning the values $\rho(1-\frac{1}{n}) := \frac{1}{n} + \sup_{s\in[1-\frac{1}{n-1},1]}|\Phi_0(1) - \Phi_0(s)|_\F$, for $n=2,3,4,\ldots$, and $\rho(0) := \rho(1-\frac{1}{2}) + 1$. (The sups exist since $\Phi_0(s)$ is continuous.) Next, interpolate $\rho(s)$ linearly over each interval $[1-\frac{1}{n}, 1-\frac{1}{n+1}],\; n = 1,2,3,\ldots$. This gives a piecewise-linear function that satisfies all the desired properties, except differentiability at the points $1-\frac{1}{n}, n \in \N$. Finally, ``smooth out" this piecewise-linear function at these points in such a way that the resulting function $\rho(s)$ continues to be strictly decreasing, and stays bigger than the function $|\Phi_0(1) - \Phi_0(s)|_\F$.

Define $$ \beta(s):= \rho^{-1}\Big(\rho(0)\frac{ \int_0^{1-s} e^{-u^{-2}} du }{\int_0^1 e^{-u^{-2}} du}\Big).$$

It is easy to see that $\beta(0) = 0$, $\beta(1) = 1$, $\beta:[0,1] \to [0,1]$ is continuous and strictly increasing, and $\beta$ is smooth on $[0,1)$. It only remains to check that $\Phi_0(\beta(s))$ is flat at $s = 1$, and we do this as follows:
$|\Phi_0(\beta(1)) - \Phi_0(\beta(s))|_\F = |\Phi_0(1) - \Phi_0(\beta(s))|_\F < \rho(\beta(s))$ = $\rho(0)\frac{ \int_0^{1-s} e^{-u^{-2}} du }{\int_0^1 e^{-u^{-2}} du} < \rho(0)\frac{(1-s)e^{-(1-s)^{-2}}}{\int_0^1 e^{-u^{-2}} du} = o( (1-s)^k)$ as $s \to 1-$, for all $k \in \N$, as desired.

To deduce the statement of the Proposition, define $\alpha:[0,1] \to [0,\infty]$ by $\alpha(s) =\frac{\beta(s)}{1-\beta(s)}$, so that $\Phi := \phi \circ \alpha = \Phi_0 \circ \beta$, which we have just shown to be a smooth homotopy from $\pi_\P^0$ to $\pi_\P^{\infty}$.
\end{proof}

By Proposition~\ref{submersion} and the following argument, we see that for every $t \in[0,\infty]$, the flag space $\F$ of the tuple $(\P,\L,\F,\pi^t_{\P},\pi_\L)$ is a smooth submanifold of $\P\times\L$. Define
$$
\Psi^t : (\pi_\P^t,\pi_\L): \F \to \P \times \L.
$$ 
Since both component maps are smooth, this is a smooth map. 

To see that $\Psi^t$ is one-to-one, note that two elements $(p_1,\ell_1)$, $(p_2,\ell_2)$
with the same image must in particular have the same image under $\pi_\L$, which
means that $\ell_1 = \ell_2$.   But then the restriction of $\pi_\P^t$ to a fiber
of $\pi_\L$ is a parametrization of an embedded curve in $\mathbb{RP}^2$, and is injective;
therefore $p_1 = p_2$.

To see that $\Psi^t$ is an immersion, consider the derivative of $\Psi^t$ at some $
(p,\ell)$ in $\F$. If a tangent vector lies in the kernel, of $D\Psi^t$, then it must lie in the kernel of $D\pi_\L$, which means that it is tangent to the fiber of $\pi_\L$ passing through $\ell$; but the restriction of $\pi_\P^t$ to this
fiber is a diffeomorphism onto an embedded curve, and has injective derivative.
Thus the derivative of $\Psi^t$ is injective.

Since $\Psi^t$ is an injective immersion of a compact smooth manifold into a smooth manifold, it is an embedding, i.e. $\F$ is a submanifold of $\P\times\L$.

\begin{lemma}
\label{iso1}
Let $(\P,\L,\F,\pi^0_{\P},\pi_\L)$ be a smooth projective plane. Then for each $t\in[0,\infty)$, the tuple $(\P,\L,\F,\pi_{\L},\pi^t_{\P})$ is a smooth projective plane. 
\end{lemma}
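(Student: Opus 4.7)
The plan is to verify the axioms of Definition~\ref{SP1} for the tuple $(\P,\L,\F,\pi^t_{\P},\pi_{\L})$. Smoothness of $\pi_{\P}^t$ and the embedding of $\F$ as a submanifold of $\P\times\L$ are in hand (standard smooth dependence of CSF on initial data, plus the paragraph following Proposition~\ref{submersion}), while $\pi_\L$ is unchanged. So three items remain: (i) $\pi_\P^t$ is a submersion; (ii) SPP1; (iii) SPP2. Axiom SPP1 is immediate from Proposition~\ref{interc}, applied to any two original lines, which by assumption intersect transversely at exactly one point and thus continue to do so under CSF.

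For (i), I will invoke Proposition~\ref{subvec}: it suffices to show, for every $\ell\in\L$ and every nonzero $\xi\in T_\ell\L$, that the normal component $v$ of the corresponding variation $\bar\xi$ along $\bar\ell^t$ has exactly one transverse zero. Since $v$ evolves by the linearized equation \eqref{linearized}, whose coefficients meet the hypotheses of Theorem~\ref{interpara}, Proposition~\ref{transversezeros} guarantees that its transverse-zero count is nonincreasing in $t$. On the other hand, because each $\bar\ell^t$ lifts to an area-bisecting curve on $\mathbb{S}^2$ (cf.\ Lemma~\ref{intkiszero}), the lifted variation $\tilde v$ satisfies $\int \tilde v\, d\tilde s = 0$, forcing at least two sign changes on $\mathbb{S}^2$, i.e., at least one transverse zero of $v$ on $\bar\ell^t$. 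At $t=0$, SPP2 for the original smooth projective plane rules out two transverse zeros: two zeros of $\bar\xi_\perp$ at distinct $p_1,p_2$ would place $\xi$ in $T_\ell L_{p_1}^0\cap T_\ell L_{p_2}^0$, contradicting the transverse intersection of these pencils at their unique common line. The three bounds force the transverse-zero count to equal one for all $t$; moreover, a nontransverse (multiple) zero cannot develop, because Angenent's strict-decrement clause would then push the sign-change count below the area-bisecting minimum of two on $\mathbb{S}^2$.

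For (iii) uniqueness, I would consider
\[
\mathrm{pr}^t:(\F\times_\L\F)\setminus\Delta\to(\P\times\P)\setminus\Delta(\P),\qquad ((q_1,\ell),(q_2,\ell))\mapsto(\pi_\P^t(q_1,\ell),\pi_\P^t(q_2,\ell)),
\]
with $\Delta$ the fiberwise diagonal. It is injective by SPP1, a local diffeomorphism by the single-transverse-zero conclusion of (i) together with a dimension count (which rules out that $d\mathrm{pr}^t$ has a kernel with nonzero $\hat\ell$-component), and proper by compactness of $\F\times_\L\F$ combined with $\pi_\P^t$ being an embedding on each fiber. A proper injective local diffeomorphism onto the connected manifold $(\P\times\P)\setminus\Delta(\P)$ is a diffeomorphism, yielding unique existence of a joining line. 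For transversality at the unique common line $\ell_0$ of $L_{p_1}^t$ and $L_{p_2}^t$, any nonzero $\xi\in T_{\ell_0}L_{p_1}^t\cap T_{\ell_0}L_{p_2}^t$ would make $\bar\xi_\perp$ vanish at both $p_1$ and $p_2$, again contradicting the single-transverse-zero result; since each $L_{p_i}^t$ is $1$-dimensional in the $2$-dimensional $\L$, the transverse span follows.

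The main obstacle is the zero-count bookkeeping in step (i): pinning the transverse-zero count of $v$ at exactly one for all $t$ requires Angenent's one-sided bound, the Gauss--Bonnet-driven lower bound from area-bisection, and the initial upper bound borrowed from SPP2 at $t=0$ all cooperating, while excluding the formation of multiple zeros throughout the flow relies on the strict-decrement clause of Theorem~\ref{interpara}. Once this delicate count is nailed down, both the submersion property and the dual SPP2 fall out essentially by the same mechanism.
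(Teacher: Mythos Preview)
Your argument is correct and follows the same architecture as the paper's: SPP1 via Proposition~\ref{interc}, the submersion property via Proposition~\ref{subvec} once the transverse-zero count of the linearized variation is pinned at one, and SPP2 by reducing to that same zero count. Two points of departure are worth noting. For the lower bound on zeros in step (i), the paper invokes topology---the normal bundle of $\bar\ell^t\subset\mathbb{RP}^2$ is a M\"obius band, so any nonzero section must vanish somewhere---whereas you use the area-bisecting integral $\int\tilde v\,d\tilde s=0$ on the lift; both work, and in fact the antipodal relation $\tilde v(x+\pi)=-\tilde v(x)$ alone already forces a sign change, so the integral is not strictly needed. For SPP2, the paper argues only uniqueness of the common line (two common lines would make the corresponding point rows meet at both $p$ and $q$, contradicting SPP1) and transversality exactly as you do, leaving existence of the joining line implicit; your proper injective local-diffeomorphism argument on $(\F\times_\L\F)\setminus\Delta$ handles existence and uniqueness simultaneously and is more explicit on that point. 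Your care in excluding multiple zeros via Angenent's strict-decrement clause is also a bit more thorough than the paper's treatment.
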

\begin{proof}
We first verify that the tuple $(\P,\L,\F,\pi_{\L},\pi^t_{\P})$ satisfies SPP1 in Definition~\ref{SP1}. For every $\ell\in\L$,  the restriction of $\pi_\P$ to $\hat{\ell}\subset\F$ is a smooth embedding and it remains smoothly embedded under CSF [see Theorem 3.1, \cite{Gage}]. We adopt the notation $\bar{\ell_t}=\pi^t_\P(\hat{\ell})\subset \P$, so for fixed $\ell\in \L$ we get a family of smooth curves in $\P$ depending on the parameter $t$. Since we begin with a smooth projective plane, at $t=0$ any two point rows $P_{\ell_1}^0:=(\bar{\ell_1})_0=\pi^0_\P(\pi_\L(\ell_1))$, $P_{\ell_2}^0:=(\bar{\ell_2})_0=\pi^0_\P(\pi_\L(\ell_2))$ intersect transversely at precisely one point. By Corollary~\ref{interc}, the point rows $P_{\ell_1}^t$ and $P_{\ell_2}^t$ remain intersecting transversely at exactly one point at any $t\in[0,\infty)$. 

Then we will use Proposition~\ref{subvec} to show that $\pi^t_\P$ is a submersion for all time. Recall that for $\ell\in\L$ and a smooth path $s\mapsto \ell_s\in\L$ with $\ell_0=\ell$, $ \pi_\L^{-1}(\ell_s)=\hat{\ell_s} \subset \F$ is a family of curves depending on the parameter $s$. For every $s$, we obtain a CSF $t\mapsto \pi_\P^t|_{\hat{\ell}_s}:\hat{\ell}_s\to \P$. Since $\pi_\L$ is a smooth fiber bundle, we can use the smooth map $\tau_{\ell,\ell'}:\hat{\ell}\to\hat{\ell'}$ (It is defined in the beginning of this section.) to adjust the domains so that for all $s$, $t$ the composition $\displaystyle \hat{\ell} \xrightarrow{\tau_{\ell,\ell_s}} \hat{\ell_s} \xrightarrow{\pi_\P^t}\P$ defines a family of CSF's depending on the parameter $s$. 

Differentiating the family of CSF's with respect to $s$, we obtain a solution to the linearized CSF, linearized at $t\mapsto\pi_\P^t|_{\hat{\ell}}$, which (at time $t$) is a vector field along $\bar{\ell}_t$. 
 
At $t=0$ the solution to the linearized flow is a vector field $\bar{\xi}$ along $\pi^0_\P|_{\hat{\ell}}:\hat{\ell}\to \P$ corresponding to the variation of $\bar{\ell}\subset \P$ defined by $\xi\in \T_\ell\L$. Recall that if $\xi\neq 0$, then the component of $\bar{\xi}$ normal to $\bar{\ell}$ is a normal vector field which vanishes precisely once, and transversely. At any $t\in[0,\infty)$, the number of transverse zeros of the solution to the LCSF cannot increase due to Proposition~\ref{transversezeros}, and it cannot decrease to zero since the normal bundle of $\bar\ell$ is a twisted line bundle, i.e. it is  a Mobius band; hence it does not have a nowhere vanishing section.
Therefore, at each $t\in[0,\infty)$, the normal component of the corresponding variation of $\bar{\ell}_t$ has precisely one transverse zero. By Proposition~\ref{subvec}, $\pi_\P^t$ is a submersion for all $t$.

What remains to check is SPP2 in Definition~\ref{SP1}. For any tuple, we denote the line pencil through $p\in\P$ by $L_p^t:=\pi_\L((\pi_\P^t)^{-1}(p))$. Any element (line) of $L^t_p$ corresponds to a point row through $p$. The line pencil $L^t_p$ corresponds to a family of point rows where any pair of point rows intersect only at $p$, and transversely. 

Moreover, any two line pencils $L_p^t$ and $L_q^t$ intersect at exactly one point $\ell$ in $\L$ because if they intersect at more than one point, then there are point rows intersecting at two points, $p$ and $q$.

Next, we will verify that $L_p^t$ and $L_q^t$ intersect transversally at $\ell$. Let $\xi^p\in T_\ell L^t_p$, then the normal component of the corresponding variation of $\bar{\ell}_t$ vanishes at exactly one point $p$. Similarly, if $\xi^q\in T_\ell L^t_q$, then the normal component of the corresponding variation of $\bar{\ell}_t$ vanishes at precisely one point $q$. Recall that for any $p\in \bar{\ell}_t$, there is a one dimensional subspace of $V_p\subset T_\ell\L$ such that $\xi\in V_p$ iff $\bar{\xi}$ is tangent to $\bar{\ell}_t$ at $p$. Since $p\neq q$ and $\dim T_\ell\L=2$, $\xi^p$ and $\xi^q$ must span the tangent space $T_\ell\L$. 

By Definition~\ref{SP1} we can conclude that  at each $t\in[0,\infty)$, the tuple $(\P,\L,\F,\pi_{\L},\pi^t_{\P})$ is a smooth projective plane.
\end{proof}

\section{Proof of Theorem~\ref{MT}}
\label{s4}
In this section, we present the proof of our main result in this paper, Theorem~\ref{MT}. The proof requires the following Lemma:
\begin{lemma}
\label{sppat1}
The tuple $(\P,\L,\F,\pi_\P^\infty,\pi_\L)$ is a smooth projective plane. 
\end{lemma}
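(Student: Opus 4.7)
The plan is to verify the three defining conditions of Definition~\ref{SP1} at $t=\infty$: that $\pi_\P^\infty$ is a submersion, and that conditions SPP1 and SPP2 hold. Smoothness of $\pi_\P^\infty$ and the embedding of $\F$ in $\P\times\L$ are already established by Proposition~\ref{submersion} and the discussion preceding Lemma~\ref{iso1}. Theorem~\ref{unique} combined with the $C^\infty$-convergence from Theorems~\ref{EX} and~\ref{EX2} shows that each point row $\bar\ell^\infty:=\pi_\P^\infty(\hat\ell)$ is a smoothly embedded great circle in $\mathbb{RP}^2$.

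The first step is to show $\pi_\P^\infty$ is a submersion via Proposition~\ref{subvec}: given $\ell\in\L$ and a nonzero $\xi\in T_\ell\L$, I must prove that the normal component of the corresponding variation of $\bar\ell^\infty$ has exactly one transverse zero. Let $v_t$ be the solution to the LCSF~\eqref{L} with $v_0=\bar\xi$. By Lemma~\ref{iso1}, for every finite $t$ the map $\pi_\P^t$ is a submersion, and Proposition~\ref{subvec} then forces $v_t$ to have exactly one transverse zero; this count cannot rise in the limit by Proposition~\ref{transversezeros}. The $C^\infty$-convergence $v_t\to v_\infty$ from \S\ref{ffc} combined with $a,b,c\to 0$ from Theorem~\ref{EX} shows that $v_\infty$ is a stationary solution of the limit equation $v_t=v_{xx}+v$, so $v_\infty(s)=A\sin s+B\cos s$ for constants $A,B$. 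To rule out $(A,B)=(0,0)$ I would argue by contradiction: if $v_\infty\equiv 0$, the spectral analysis of the limit LCSF on $\mathbb{RP}^2$-admissible (odd-frequency) modes, whose eigenvalues are $\{1-n^2:n\text{ odd}\}=\{0,-8,-24,\ldots\}$, forces $\|v_t\|_{L^2}$ to decay like $e^{\lambda t}$ with $\lambda\le -8$; renormalizing $\hat v_t:=v_t/\|v_t\|_{L^2}$ yields a subsequential $C^\infty$-limit $\hat v_\infty$ that is a unit-norm eigenfunction of the limit operator with eigenvalue $\lambda\le -8$, hence a Fourier mode of frequency $n\ge 3$ with at least three zeros on $\mathbb{RP}^2$. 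This contradicts the single transverse zero count preserved from Proposition~\ref{transversezeros} (which applies to the renormalized equation, a linear parabolic PDE satisfying the hypotheses of Theorem~\ref{interpara}). Hence $(A,B)\ne(0,0)$, and the two antipodal zeros of $A\sin s+B\cos s$ on the $\mathbb{S}^2$-lift descend to a single transverse zero on $\mathbb{RP}^2$, as required.

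For SPP1, the map $\xi\mapsto(A,B)$ from the previous step is an injective linear map between the two 2-dimensional spaces $T_\ell\L$ and the neutral space of the limit LCSF, hence an isomorphism. Thus the smooth map $G:\L\to\{\text{great circles in }\mathbb{RP}^2\}\cong\mathbb{RP}^2$, $G(\ell)=\bar\ell^\infty$, has everywhere invertible differential and is a local diffeomorphism between connected compact 2-manifolds. Such a map is a covering, and since $\mathbb{RP}^2$ admits no nontrivial self-covering, $G$ is a diffeomorphism. Therefore $\ell_1\ne\ell_2$ implies $\bar\ell_1^\infty\ne\bar\ell_2^\infty$, and two distinct great circles in $\mathbb{RP}^2$ meet transversely at a single point, verifying SPP1. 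For SPP2 I would recycle the tangent-space decomposition argument from the SPP2 portion of Lemma~\ref{iso1}, now using the submersion property of $\pi_\P^\infty$ just established: two distinct line pencils $L_p^\infty$, $L_q^\infty$ meet at a unique line (else two point rows would meet twice, contradicting SPP1), and vectors $\xi^p\in T_\ell L_p^\infty$, $\xi^q\in T_\ell L_q^\infty$ correspond to variations whose normal components vanish at the distinct points $p$ and $q$, hence span $T_\ell\L$.

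The principal obstacle is excluding $v_\infty\equiv 0$: the basic topological argument using the M\"obius-band normal bundle of $\bar\ell^\infty$ (which sufficed at finite $t$, because there $v_t$ is not identically zero by linear uniqueness) does not exclude the identically-zero section in the limit. The renormalization-plus-spectral-analysis argument above, exploiting the gap $\lambda\le -8$ between the neutral and first decaying eigenvalue of the limit LCSF on $\mathbb{RP}^2$-admissible modes, is the key new ingredient that completes the passage to $t=\infty$.
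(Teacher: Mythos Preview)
Your overall architecture matches the paper's: verify that $\pi_\P^\infty$ is a submersion via Proposition~\ref{subvec}, then check SPP1 and SPP2. Your SPP1 argument via the global map $G:\L\to\{\text{great circles}\}\cong\mathbb{RP}^2$ is different from the paper's (which instead fixes a point $p$ and analyzes the map $L_p^t\to P(T_p\P)$ as a covering that is injective for $t<\infty$ and hence in the limit), but your version is clean and correct once the submersion property is in hand. The SPP2 argument is essentially identical to the paper's.

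The genuine gap is in the non-vanishing step $v_\infty\not\equiv 0$. Your renormalization argument asserts that if $v_\infty\equiv 0$ then $\hat v_t:=v_t/\|v_t\|_{L^2}$ has a subsequential $C^\infty$-limit that is a unit-norm eigenfunction of $\partial_{xx}+1$ with eigenvalue $\le -8$. None of these claims is justified as written: (i) compactness of $\{\hat v_t\}$ in $C^\infty$ is not automatic---you know $\|v_t\|_{C^{k-1}}$ is bounded (Lemma~\ref{Q}), but after dividing by $\|v_t\|_{L^2}\to 0$ the ratio $\|v^{(k)}_t\|_{L^2}/\|v_t\|_{L^2}$ could blow up; (ii) even granting compactness, a subsequential limit need not be an eigenfunction, since the renormalized equation $\partial_t\hat v=L_t\hat v-\mu(t)\hat v$ carries an unknown, possibly non-convergent scalar $\mu(t)$, and $L_t$ is only asymptotically autonomous; (iii) the assertion that $\|v_t\|_{L^2}$ decays at an exponential rate $\le -8$ already presupposes the spectral decoupling you are trying to prove. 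These gaps are probably fillable (a Merle--Zaag type ODE lemma exploiting the integrable-in-time perturbation would do it), but that is real work, not a remark.

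The paper takes a more direct and entirely elementary route. Lemma~\ref{con} shows $|v_1(t)|\ge e^{-C}|v_1(0)|>0$ for all $t$, where $v_1$ is the first Fourier coefficient of $v$. The key input is Lemma~\ref{LG}: any smooth $2\pi$-periodic function satisfying \eqref{P2}--\eqref{P3} has Fourier coefficients obeying $|c_n|\le \tfrac{\sqrt 2}{2}\,n\,|c_1|$. Combining this linear-growth bound with the decay $|a_\ell|,|b_\ell|,|c_\ell|\le \epsilon_6 e^{-t}/\ell^6$ of the Fourier coefficients of the perturbation, one estimates the ODE \eqref{ODE1} for $v_1$ directly and obtains $\frac{d}{dt}|v_1|\ge -Ce^{-t}|v_1|$, which integrates to the positive lower bound. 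This replaces your renormalization/spectral-gap step entirely and avoids all three issues above.
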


\begin{proof}[Proof of Theorem~\ref{MT}]
Let $(\P,\L,\F,\pi_{\P},\pi_{\L})$ be a two-dimensional smooth projective plane, then $\P$ is diffeomorphic to $\mathbb{RP}^2$ (Theorem~\ref{diffreal}). We endow $\P$ with a Riemannian metric of constant curvature $1$, making it isometric to $\mathbb{RP}^2$ with its usual metric. Define $\pi_\P^t$ as in (\ref{pipt}) using CSF. We prove in Proposition~\ref{submersion} that the limiting map $\pi_\P^\infty$ is smooth and $\pi_\P^t$ defines a smooth homotopy after reparametrization to the time interval $[0,1]$. Moreover, we show in Lemma~\ref{iso1} and Lemma~\ref{sppat1} that $(\P,\L,\F,\pi^t_\P,\pi_\L)$ is a smooth projective plane for every $t\in [0,1]$, after reparametrization. Note that as $t=1$, $(\P,\L,\F,\pi^1_\P,\pi_\L)$ is the real projective plane since for every $\ell\in\L$, $\pi_\P^1\pi_\L^{-1}(\ell)$ represents a geodesic in $\mathbb{RP}^2$. 
\end{proof}

The rest of this paper is devoted to the proof of Lemma~\ref{sppat1}. We will first prove that as $t\to\infty$, the number of transverse zeros of the solutions to the LCSF stays one. Since we begin with curves that defines a smooth projective plane, the number of transverse zeros of the solutions to the LCSF cannot increase. But this does not rule out the possibility that the solution to the LCSF vanish as $t\to \infty$. We will show in Lemma~\ref{con} that this cannot happen.

Recall that the linearized equation of (\ref{h1}) at $u$ is
\begin{equation}
\label{V1} v_t=(1+a(x,t))v_{xx}+b(x,t)v_x+(1+c(x,t))v
\end{equation} 
where \begin{equation}\label{res}||a(\cdot,t)||_{C^k},\;||b(\cdot,t)||_{C^k},\;||c(\cdot,t)||_{C^k}\text { are less than }\epsilon_k
e^{-t}
\end{equation}
and the initial condition satisfies
\begin{eqnarray}
    \label{P2}&&v(x+\pi)=-v(x) \text { for all  }  x\in\mathbb{R}\\
    \label{P3}&&v \text{ has exactly one transverse zero in every interval } [x,x+\pi)
\end{eqnarray}

\begin{lemma}
\label{con}If $v:\R \ra \R$ is
a smooth function satisfying (\ref{P2}) and (\ref{P3}) then the solution $v(x,t)$ to (\ref{V1}) satisfies
\begin{equation}\lim_{t\rightarrow\infty}||v(\cdot,t)||_{C^0}\neq
0.\end{equation}
\end{lemma}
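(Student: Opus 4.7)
The plan is to argue by contradiction: suppose $\lim_{t\to\infty}\|v(\cdot,t)\|_{C^0}=0$, and combine the Sturmian zero-count theorem with a spectral decomposition to derive a contradiction.

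\textbf{Step 1 (Zero count is preserved).} First verify that the coefficients $a,b,c$ in (\ref{V1}) inherit $\pi$-periodicity in $x$ from the symmetry $u(x+\pi)=-u(x)$, so (P2) is preserved by (V1) via uniqueness. Proposition~\ref{transversezeros} then says the number of transverse zeros of $v(\cdot,t)$ in $[0,2\pi)$ is non-increasing; by (P2) it is even, so starting from $2$ it stays in $\{0,2\}$. Rule out the value $0$ by combining backward uniqueness for (\ref{V1}) with (P3) (or directly by noting that a $\pi$-antisymmetric function with no sign changes must be identically zero). Hence $v(\cdot,t)$ has exactly two transverse zeros for every $t\geq 0$.

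\textbf{Step 2 (Spectral decomposition).} Write $v = v_1 + v_\perp$, where $v_1(x,t) = v_s(t)\sin x + v_c(t)\cos x$ is the $L^2$-projection onto the kernel of $\partial_x^2+1$ in the $\pi$-antisymmetric sector, and $v_\perp$ is supported on Fourier modes $|n|\geq 3$. Since $(\partial_x^2+1)v_\perp = v_{xx}+v$ and $\partial_x^2+1$ restricted to $v_\perp$ is invertible with operator norm $1/8$, Lemma~\ref{vxx+vexp} gives exponential decay $\|v_\perp\|_{C^j} \leq C e^{-3t}$ for $j$ bounded in terms of $k$.

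\textbf{Step 3 (Slow ODE for $v_1$).} Project (\ref{V1}) against $\sin x$ and $\cos x$; the unperturbed part drops out because $\sin x, \cos x$ lie in the kernel of $\partial_x^2+1$, leaving $\dot v_s = \tfrac{1}{\pi}\int_0^{2\pi}(av_{xx}+bv_x+cv)\sin x\,dx$ and similarly for $\dot v_c$. Combining Condition~\ref{abc}, Lemma~\ref{Q}, and the bound on $\|v_\perp\|_{C^2}$ from Step~2 yields $|\dot v_1(t)| \leq C\epsilon_k e^{-t}|v_1(t)| + Ce^{-4t}$. A Gronwall argument started at any $t_0$ gives $|v_1(t)| \geq |v_1(t_0)|\,e^{-C\epsilon_k e^{-t_0}} - Ce^{-4t_0}$ for all $t\geq t_0$. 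In particular, if $|v_1(t_0)| > 2Ce^{-4t_0}$ at any $t_0$, then $|v_1|$ is bounded below by $|v_1(t_0)|/2$ for all later time, contradicting $\|v\|_{C^0}\to 0$.

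\textbf{Step 4 (Sturm-Hurwitz contradiction).} The remaining case is $|v_1(t)| \leq 2Ce^{-4t}$ for all $t$; then $|v_1|$ decays strictly faster than the rate $e^{-3t}$ controlling $\|v_\perp\|_{L^\infty}$. Asymptotically $v_\perp$ is dominated by its slowest-decaying modes $n=\pm 3$ (higher modes decay at rate at least $e^{-24t}$ under the unperturbed flow, and the spectral gap persists under the small time-decaying perturbation), so $v_\perp(\cdot,t) \approx A(t)\cos 3x + B(t)\sin 3x$ up to exponentially faster corrections. This function attains its extremal value $\|v_\perp\|_{L^\infty}$ at $6$ equispaced points with alternating signs; since $|v_1| \ll \|v_\perp\|_{L^\infty}$ for large $t$, the addition of $v_1$ cannot flip the sign at any of these $6$ extrema, so $v(\cdot,t)$ must have at least $6$ transverse sign changes in $[0,2\pi)$. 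This contradicts Step~1.

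\textbf{Main obstacle.} The most delicate part is Step~4: making precise the claim that for large $t$, $v_\perp$ is dominated (at its extrema) by its $n=\pm 3$ Fourier modes. This requires running energy estimates analogous to Lemma~\ref{vxx+vexp} on the projection of $v$ onto $\{|n|\geq 5\}$, exploiting the improved spectral gap $|1-25|=24$ instead of $8$, to conclude that modes $|n|\geq 5$ decay substantially faster than mode $\pm 3$ under the perturbed evolution; equivalently, an invariant-manifold/spectral-projection argument for the non-autonomous linear equation (\ref{V1}).
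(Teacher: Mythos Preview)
Your Steps 1--3 are fine in spirit, but Step 4 contains a genuine error, not just a technical obstacle. You write that in the remaining case $|v_1(t)|\leq 2Ce^{-4t}$ decays ``strictly faster than the rate $e^{-3t}$ controlling $\|v_\perp\|_{L^\infty}$'', and conclude that $v_\perp$ dominates so that $v$ must pick up $\geq 6$ sign changes. But the estimate $\|v_\perp\|\leq Ce^{-3t}$ coming from Lemma~\ref{vxx+vexp} is only an \emph{upper} bound; it says nothing about the actual size of $v_\perp$. Under the unperturbed flow the $n=\pm 3$ modes decay like $e^{-8t}$, and with the perturbation the forcing on $v_3$ from $v_1$ is of order $e^{-t}|v_1|\leq Ce^{-5t}$, so one only gets $|v_3|\lesssim e^{-5t}$. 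Thus in your Step~4 scenario one has $|v_1|\lesssim e^{-4t}$ decaying \emph{slower} than every component of $v_\perp$, so $v$ is still dominated by its first harmonic and retains exactly two zeros---no contradiction arises. The invariant-manifold sketch you propose cannot repair this, because it would at best yield \emph{upper} bounds on higher modes; what you actually need is a \emph{lower} bound on $\|v_\perp\|$ relative to $|v_1|$, and nothing in your setup produces one.

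The paper closes this gap by a completely different, elementary observation that you are missing (Lemma~\ref{LG}): the single-transverse-zero hypothesis (P3), which by Step~1 holds for all $t$, forces $|v_n(t)|\leq \tfrac{\sqrt{2}}{2}\,|n|\,|v_1(t)|$ for every $n$. This bounds \emph{all} higher modes pointwise in time by the first one, so in the ODE for $v_1$ the coupling term $\sum_{j\neq 1}(\cdots)v_j$ is itself $\leq Ce^{-t}|v_1|$ (the sum over $j$ converges thanks to the $|1-j|^{-6}$ decay of the coefficients). One obtains directly $\tfrac{d}{dt}|v_1|\geq -Ce^{-t}|v_1|$, hence $|v_1(t)|\geq e^{-C}|v_1(0)|>0$, with no dichotomy and no Step~4 needed. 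The point is that (P3) is used \emph{quantitatively} at every time, not merely to pin the zero count.
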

The proof of Lemma~\ref{con} will be given in \S\ref{pf43}. 

\subsection{The linear growth property of Fourier coefficients}
\label{fc}
Define the $n^{th}$ Fourier coefficients of any $2\pi$-periodic, smooth, real valued function $f$ as
\begin{equation}
\label{fourierco0}
c_n=\frac{1}{2\pi}\int_0^{2\pi} f (x)\;e^{-inx}dx.
\end{equation}

\begin{lemma}
\label{LG} Let $f:\R \ra \R$ be a smooth function satisfying
conditions (\ref{P2}), (\ref{P3}). Then $|c_n|\leq \frac{\sqrt{2}}{2}n|c_1|$ and $|c_{-n}|\leq \frac{\sqrt{2}}{2}n|c_1|$ for all $n\in \N$.
\end{lemma}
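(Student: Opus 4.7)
My plan is to factor $f$ using its sign-change structure, reduce the inequality to a Fourier bound for a non-negative auxiliary function, and close the argument by telescoping along the odd Fourier modes.

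Conditions (\ref{P2}) and (\ref{P3}) force $f$ to have exactly two transverse zeros in $[0,2\pi)$, at some points $x_0$ and $x_0+\pi$. The translation $x\mapsto x-x_0$ multiplies each $c_n$ by a unimodular factor and so preserves the inequalities to be proved, hence I may assume the zeros are at $0$ and $\pi$. Smoothness and transversality then let me write $f(x)=\sin(x)\,g(x)$ for a smooth, $\pi$-periodic, nowhere-vanishing function $g$; without loss of generality $g>0$.

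The elementary identity $e^{-inx}-e^{-i(n-2)x}=-2i\sin(x)\,e^{-i(n-1)x}$ integrates against $f$ to give the cancellation
\[
c_n-c_{n-2}=-2i\,\widehat F(n-1),\qquad F(x):=f(x)\sin(x)=\sin^{2}(x)\,g(x)\ge 0.
\]
Because $F\ge 0$, the standard bound $|\widehat F(k)|\le \widehat F(0)=\tfrac{1}{2\pi}\int F\,dx$ holds for every $k$. Specializing the cancellation identity at $n=1$, and using $c_{-1}=\overline{c_1}$ (since $f$ is real), gives $\operatorname{Im}(c_1)=-\widehat F(0)$, and hence $\widehat F(0)\le |c_1|$. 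Combining these yields $|c_n-c_{n-2}|\le 2|c_1|$ for every odd $n$. Now (\ref{P2}) forces $c_k=0$ for all even $k$, so the odd-indexed coefficients form a chain $c_1,c_3,c_5,\ldots$, and the triangle inequality applied telescopically gives
\[
|c_n|\ \le\ |c_1|+\sum_{k=1}^{(n-1)/2}|c_{2k+1}-c_{2k-1}|\ \le\ n|c_1|,
\]
with the bound on $|c_{-n}|$ following from $c_{-n}=\overline{c_n}$.

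The chain above already produces $|c_n|\le n|c_1|$, which is weaker than the stated constant $\tfrac{\sqrt{2}}{2}n$ by a factor of $\sqrt{2}$. The slack lives in the estimate $\widehat F(0)\le |c_1|$: the Pythagorean identity $|c_1|^{2}=|\operatorname{Re}(c_1)|^{2}+\widehat F(0)^{2}$ is sharp only when $\operatorname{Re}(c_1)=0$. I expect to recover the $1/\sqrt{2}$ either from an $L^{2}$-style telescoping that controls $|c_n|^{2}+|c_{-n}|^{2}$ jointly and invokes Cauchy--Schwarz to save a factor of $\sqrt{2}$, or from a more delicate use of the fact that $F(0)=F(\pi)=0$ to sharpen the naive bound $|\widehat F(n-1)|\le \widehat F(0)$. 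This last refinement—matching the exact constant—is the step I expect to be most delicate.
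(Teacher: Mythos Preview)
Your argument is essentially the same as the paper's, recast in complex notation: both translate so the zeros of $f$ sit at $0$ and $\pi$, both exploit that $F=f\sin x$ has constant sign to bound Fourier increments, and both telescope along the odd modes. The paper carries this out with the real coefficients $a_n,b_n$, obtaining $|a_{n+2}-a_n|\le 2|b_1|$ and $|b_{n+2}-b_n|\le 2|b_1|$; your identity $c_n-c_{n-2}=-2i\widehat F(n-1)$ is the complex packaging of the same trigonometric step.

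Your anxiety about the constant $\tfrac{\sqrt2}{2}$ is unwarranted. The paper's own final ``Therefore'' does not actually produce that constant: from $|a_n|,|b_n|\le n|b_1|$ one gets $|c_n|\le \tfrac{\sqrt2}{2}\,n\,|b_1|$, and since only $|b_1|\le 2|c_1|$ holds in general, this yields $|c_n|\le \sqrt2\,n|c_1|$, not $\tfrac{\sqrt2}{2}\,n|c_1|$. Your bound $|c_n|\le n|c_1|$ is already sharper than what the paper's proof delivers. More to the point, the only consumer of this lemma is Lemma~\ref{con}, where the constant is immediately absorbed into the number $C$; any bound of the form $|c_n|\le Kn|c_1|$ suffices. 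So there is no missing ``delicate refinement''---you are done.
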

\begin{proof}
We can assume that zeros of $f$ are $m\pi$, $m\in\Z$ (We could
translate $f$ in $x$ to make it true). Hence $f$ is either
nonnegative or non-positive in $[0,\pi]$. Let $a_n=\frac{1}{\pi} \int_0^{2\pi} f (x)\cos nx \;dx$ and $b_n=-\frac{1}{\pi}
\int_0^{2\pi} f(x) \sin nx \;dx$, then $c_n=\frac{a_n+ib_n}{2}$ and $c_{-n}=\frac{a_n-ib_n}{2}$.
\begin{eqnarray}
\label{a}
&&| a_{n+2}-a_{n}|\\
&=&|\frac{1}{\pi}\int_0^{2\pi} f(x)\cos((n+2)x)dx-\frac{1}{\pi}\int_0^{2\pi} f(x)\cos(n x)dx|\nonumber\\
&=&2|\frac{1}{\pi}\int_0^{2\pi} f(x)\sin x\sin((n+1)x)dx|\nonumber\\
&\leq&2\frac{1}{\pi}\int_0^{2\pi}| f(x)\sin x| dx\nonumber
\end{eqnarray}

By the condition (\ref{P3}) and the assumption that $f$ is nonnegative (or non-positive) in $[0,\pi]$, we have that $f(x)\sin x$ is always nonnegative (or non-positive) in $[0,2\pi]$. Hence 
\begin{eqnarray}
&&| a_{n+2}-a_{n}|\\
&\leq&2|\frac{1}{\pi}\int_0^{2\pi} f(x)\sin x dx|\nonumber\\
&=&2|b_1|\nonumber
\end{eqnarray}

A similar inequality can be derived for $b_n$.
\begin{eqnarray}
\label{b}
&&| b_{n+2}-b_{n}|\\
&=&|-\frac{1}{\pi}\int_0^{2\pi} f(x)\sin\left((n+2)x\right)dx+\frac{1}{\pi}\int_0^{2\pi} f(x)\sin(n x)dx|\nonumber\\
&=&2|\frac{1}{\pi}\int_0^{2\pi} f(x)\sin x\cos((n+1)x)dx|\nonumber\\
&\leq&2\frac{1}{\pi}\int_0^{2\pi} |f(x)\sin x |dx\nonumber\\
&=&2|\frac{1}{\pi}\int_0^{2\pi} f(x)\sin x dx|\nonumber\\
&=&2|b_1|\nonumber
\end{eqnarray}
By induction, (\ref{a}) and (\ref{b}) imply
\begin{eqnarray*} 
|a_n|&\leq& (n-1)|b_1|+|a_1|\;\;\text{ if
$n$ is odd and $n\geq 3$} \\
|a_n|&\leq& n|b_1|\;\;\text{ if $n$ is even}\\
|b_n|&\leq& n|b_1|
\end{eqnarray*}
Therefore,
\begin{eqnarray*}
|c_n|&=\frac{|a_n+ib_n|}{2}\leq \frac{\sqrt{2}}{2}n|c_1|\\
|c_{-n}|&=\frac{|a_n-ib_n|}{2}\leq \frac{\sqrt{2}}{2}n|c_1|
\end{eqnarray*}
for all $n\in \N$.
\end{proof}

\subsection{Infinite dimensional system of ODEs}
\label{ode}
We convert the PDE (\ref{V1}) into a linear system of ODEs by rewriting functions in \eqref{V1} in terms of their Fourier series and taking the inner product with $e^{inx}$ on both sides of the equation. Because of the orthonormality of $e^{inx}$, one can derive the following infinite
dimensional coupled system of ODEs. 
\begin{eqnarray}
\label{ODE1} \frac{d}{dt}v_n(t)&=&\Big((-n^2+1)+(-n^2)a_{0}+ i n\; b_{0}+
c_{0}\Big)v_n(t)\nonumber\\&&+\sum_{j\neq n}\Big( (-j^2)a_{n-j}+ ij\; b_{n-j}+
c_{n-j}\Big)v_j(t).
\end{eqnarray}

Let $a_\ell^{(k)}(t)$ be the $\ell^{th}$ Fourier coefficient for $a^{(k)}$, that is
\begin{equation}
a_\ell^{(k)}(t)=\frac{1}{2\pi}\int_0^{2\pi} a^{(k)}(x,t) e^{-i\ell x}\;dx.
\end{equation}
Applying integration by parts $k$ times, one can derive
\begin{equation}
\label{f0}
a_\ell^{(k)}(t)=(i\ell)^{k} a_\ell(t).
\end{equation}
On the other hand, from (\ref{res}) the condition that the $C^k$ norms of $a$, $b$ and $c$ are less than $\epsilon_k e^{-t}$, we know that
\begin{equation}
\label{f2}
|a_\ell^{(k)}(t)|=|\frac{1}{2\pi}\int_0^{2\pi} a^{(k)}(x,t) e^{-i\ell x} dx|\leq \frac{||a^{(k)}(\cdot,t)||_{C^0}}{2\pi}\int_0^{2\pi}|e^{-i \ell x}|dx\leq \epsilon_ke^{-t}.
\end{equation}
Hence (\ref{f0}) and (\ref{f2}) imply
\begin{equation}
\label{f3}
|(i\ell)^{k} a_\ell(t)|\leq  \epsilon_ke^{-t}.
\end{equation}
Note that by the same argument, we also have (\ref{f3}) for $b$ and $c$. Choose $k=6$, then

\begin{equation}
\label{AN}|a_\ell(t)|\leq \frac{\epsilon_6 e^{-t}}{\ell^6}, \quad |b_\ell(t)|\leq\frac{\epsilon_6 e^{-t}}{\ell^6},\quad |c_\ell(t)|\leq\frac{\epsilon_6 e^{-t}}{\ell^6},
\end{equation}
for all $\ell\neq 0$.

\subsection{Proof of Lemma~\ref{con}}
\label{pf43}
\begin{proof}
For any $t\geq 0$, since \begin{equation}\displaystyle\sup_{x}|v(x,t)|\geq \frac{1}{\sqrt{2\pi}}||v(x,t)||_{L^2}=(\sum_n |v_n(t)|^2)^{1/2}\geq |v_1(t)|,\end{equation} it is enough to show that $\lim_{t\to\infty}|v_1(t)|$ has a nonzero lower bound.

With $n=1$ in (\ref{ODE1}), we have
\begin{equation}\label{Case}\frac{d}{dt}v_1(t)=(-a_0 +i \; b_{0}+
c_{0})v_1(t)+\sum_{j\neq 1}( -j^2 a_{1-j}+ i\;j\; b_{1-j}+
c_{1-j})v_j(t).\end{equation} 
Therefore
\begin{eqnarray*}
\begin{split}
\frac{d |v_1|}{d t
}=&\frac{1}{2|v_1 |}\Big(\frac{d v_1}{dt}\bar{v_1}+\frac{d\bar{v_1}}{dt}v_1\Big)\\
=&\frac{1}{2|v_1 |}\Big(\Big((-a_0 +i \; b_{0}+ c_{0}) v_1+\sum_{j\neq
1} (-j^2 a_{1-j}+ i\; j\; b_{1-j}+
c_{1-j})v_j\Big)\bar{v_1}\\
&+\Big((-\bar{a}_0-i \; \bar{b}_{0}+\bar{c}_{0})\bar{v}_1+\sum_{j\neq 1}(
-j^2 \bar{a}_{1-j}- i\; j\;\bar{b}_{1-j}+ \bar{c}_{1-j})\bar{v}_j\Big)v_1\Big)\\
& (\text{By (\ref{Case})})\\
\geq&-(|a_0|+|b_0|+|c_0|)|v_1|-\frac{1}{|v_1 |}|v_1|\sum_{j\neq 1}(\;j^2\;|a_{1-j}|+|j||b_{1-j}|+|c_{1-j}|\;)|v_j|\\
&(\text{We use the fact that $z+\bar{z}\leq 2|z|$})\\
 \geq&\Big(\;-(|a_0|+|b_0|+|c_0|)-\frac{\sqrt{2}}{2}\sum_{j\neq 1}(\;|j|^3|a_{1-j}|+|j|^2|b_{1-j}|+|j||c_{1-j}|\;\Big)|v_1|\\
&(\text{By lemma \ref{fourierco0}},\; |v_j|\leq \frac{\sqrt{2}}{2}|j| |v_1|)\\
\geq&-\epsilon_6e^{-t}\Big(3+\frac{\sqrt{2}}{2}\sum_{j\neq 1}\frac{|j|^3+|j|^2+|j|}{(1-j)^6}\Big)|v_1|.\\
&(\text{By (\ref{AN})})
\end{split}
\end{eqnarray*}
Let $C=\epsilon_6\Big(3+\frac{\sqrt{2}}{2}\sum_{j\neq 1}\frac{|j|^3+|j|^2+|j|}{(1-j)^6}\Big)>0$. Then \begin{eqnarray}
\label{F} \frac{d |v_1|}{d t }\geq-Ce^{-t}|v_1|.
\end{eqnarray}
Note that at any $t\in [0,\infty)$, $|v_1|$ cannot be zero. Because if it is zero, then by Lemma~\ref{fourierco0}, we have $v=0$ at that time and this contradicts with Theorem~\ref{interpara}. 

Integrating (\ref{F}) with respect to $t$, we get that for all $t\geq 0$, 
\begin{equation}\label{nv}|v_1(t)|\geq e^{C(e^{-t}-
1)}|v_1(0)|\geq e^{-C
}|v_1(0)|
>0.
\end{equation} 
Hence $$\lim_{t\to\infty}|v_1(t)|\geq e^{-C
}|v_1(0)|.$$
\end{proof}
\subsection{Proof of Lemma~\ref{sppat1}}
\label{spp12}
\begin{proof}
We first verify that the tuple $(\P,\L,\F,\pi_\P^\infty,\pi_\L)$ satisfies SPP1 in Definition~\ref{SP1}. Pick a point $p$ in $\P$. Then for every $t\in[0,\infty]$ there is a line pencil $L_p^t$, consisting of the
lines passing through $p$; this is a subset of the manifold of lines $\L$.  By the
submersion property of $\pi_\P$, one can use the implicit function theorem to
say that the line pencil is a smooth 1-dimensional submanifold of $\L$, which
varies smoothly with $t$. Then there is a smooth map from $L_p^t$  to the
projectivized tangent space $P(T_p\P)$ of $\P$ at $p$, which takes a line $\ell$
in $L_p^t$ to its direction at $p$ (which is an element of $P(T_p\P))$. Since the
solutions to the LCSF have transverse zeroes, this map
$$
L_p^t \to P(T_p\P)
$$
is an immersion, and hence a covering map. As it varies continuously with
$t$, and is injective for $t < \infty$, it is injective when $t =\infty$. In particular,
distinct lines $\ell_1$, $\ell_2$ give rise to distinct point rows, which intersect
transversely. The number of intersection points of the point rows for
$\ell_1$ and $\ell_2$ varies continuously with $t$, and must therefore be $1$.

For every $\ell\in\L$, the restriction of $\pi^\infty_\P$ to $\hat{\ell}\subset\F$ is a smooth embedding. In addition, Lemma~\ref{con} implies that the number of transverse zero of the LCSF stays one. By Proposition~\ref{subvec}, $\pi_\P^\infty$ is a submersion.

Any two line pencils intersect at exactly one point $\ell$ in $\L$ because if they intersect at more than one point, then there are point rows intersecting at more than one point. By Lemma~\ref{con}, for any $0\neq\xi\in T_{\ell}\L $, the normal component of the corresponding variation of $\bar\ell_{\infty}$ vanishes at exactly one point. Hence, the transverse intersection of line pencils SPP2 follows from the same argument as in the proof of Lemma~\ref{iso1}.

By Definition~\ref{SP1} we can conclude that the tuple $(\P,\L,\F,\pi^\infty_{\P}.\pi_\L)$ is a smooth projective plane.
\end{proof}

\appendix
\section{}
\label{knestproof}

\begin{lemma}
\label{analysis1} Suppose $\{u_t : \R^m\times \R^n\to \R^\ell\}$, $t\in[0,\infty)$ is a family of smooth functions. Assume that for every $k$, the $k$th derivatives $\partial_x^k u_t $, $\partial_y^k u_t$ converge uniformly as $t\to\infty$, where $\partial_x^k$, $\partial_y^k$ refer to the derivatives in the $\R^m$ and $\R^n$ directions, respectively.
Then for every $k$, the family $\{u_t\}$converges in the $C^k$-topology as $t\to\infty$. 
\end{lemma}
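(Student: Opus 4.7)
The plan is to reduce the statement to a Sobolev-theoretic argument on a torus, where mixed partial derivatives can be controlled by pure partial derivatives via integration by parts. Since $C^k$-convergence is a local property, it suffices to prove it on every compact $K\subset \R^m\times\R^n$. Fix such a $K$, pick a smooth cut-off $\chi$ equal to $1$ on $K$ and supported inside a box $[-L,L]^{m+n}$, and set $\tilde u_t := \chi u_t$, which after periodic extension can be regarded as a smooth family on the torus $\mathbb{T}^{m+n} = (\R/2L\Z)^{m+n}$. Applying the Leibniz rule to $\chi u_t$, one checks that each pure derivative $\partial_x^\alpha \tilde u_t$ (resp.\ $\partial_y^\beta \tilde u_t$) is a finite linear combination of products of pure derivatives of $\chi$ with pure derivatives of $u_t$, so the hypothesis of the lemma transfers to $\{\tilde u_t\}$ on $\mathbb{T}^{m+n}$.

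The heart of the argument is the following \emph{mixed-by-pure} estimate on the torus: for every smooth $f$ and every pair of multi-indices $\alpha,\beta$,
\begin{equation*}
\|\partial_x^\alpha \partial_y^\beta f\|_{L^2}^2
\;=\; \Big|\int_{\mathbb{T}^{m+n}} \partial_x^{2\alpha} f \cdot \partial_y^{2\beta} f\,\Big|
\;\leq\; \|\partial_x^{2\alpha} f\|_{L^2}\cdot \|\partial_y^{2\beta} f\|_{L^2}.
\end{equation*}
The equality follows by integrating by parts $|\alpha|$ times in the $x$-variables and $|\beta|$ times in the $y$-variables (no boundary terms on the torus), and the inequality is Cauchy-Schwarz. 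Thus each mixed partial is controlled in $L^2$ by two pure partials of doubled order.

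Applying this bound to the differences $\tilde u_t - \tilde u_s$ and using the hypothesis, the right-hand side tends to $0$ as $s,t \to \infty$, because uniform convergence on the torus implies $L^2$ convergence. Hence $\{\tilde u_t\}$ is Cauchy in every Sobolev norm $H^\ell(\mathbb{T}^{m+n})$. For $\ell > k + (m+n)/2$, the Sobolev embedding $H^\ell \hookrightarrow C^k$ shows that $\{\tilde u_t\}$ is Cauchy, hence convergent, in $C^k$. Restricting back to $K$, where $\chi \equiv 1$, yields $C^k$-convergence of $u_t$ on $K$ for every $k$, which is the conclusion.

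The only real content is the mixed-by-pure estimate above; once that is established, the localization and Sobolev embedding steps are routine. Conceptually, the place where the full strength of the hypothesis is used (convergence of pure partials of \emph{every} order, not just up to some fixed $k$) is precisely to absorb the doubling of orders in that estimate.
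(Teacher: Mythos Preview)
Your proof is correct and follows essentially the same outline as the paper's: localize with a smooth cutoff, pass to a torus, bound mixed partial derivatives in $L^2$ by pure ones of higher order, and invoke Sobolev embedding. The only difference is in the mixed-by-pure step: the paper works on the Fourier side, using the elementary inequality $j_1^{2\alpha_1}\cdots j_n^{2\alpha_n}\leq j_1^{2|\alpha|}+\cdots+j_n^{2|\alpha|}$ on the Fourier multipliers, whereas you obtain the equivalent estimate directly in physical space via integration by parts and Cauchy--Schwarz; both routes lead to the same Sobolev control and require the same doubling of derivative order, which is why convergence of pure partials of \emph{all} orders is needed.
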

\begin{proof}
It suffices to prove the following claim: if $u$ is a continuous function on the n-cube $[0,2\pi]^n$, such that the partial derivatives in the coordinate directions 
$\frac{\partial^k u}{\partial x_i^k}$ exist and are continuous for all $i$ and $k$, then $u$ is smooth on the subcube $ [\frac{\pi}{2},\frac{3\pi}{2}]^n$. Furthermore, for every $k$, if $N>n/2+k$, then the $C^k$ norm of $u$ on $[\frac{\pi}{2},\frac{3\pi}{2}]^n$, $|| u ||_{C^k( [\frac{\pi}{2},\frac{3\pi}{2}]^n)}$, is bounded by the $C^0$ norm of the partial derivatives $\frac{\partial^{m} u }{\partial x_i^{m}}$ in $ [0.2\pi]^n$, for all $i$ and $m\leq N$.

To prove this claim one can take a smooth cutoff function $\phi:[0,2\pi]^n\to\R$ which is $1$ on a neighborhood of $[\frac{\pi}{2},\frac{3\pi}{2}]^n$, and has support in $[\frac{\pi}{4},\frac{7\pi}{4}]^n$, and verify the claim for $\phi u$ instead. 

The function $\phi u$ defines a $L^2$ function on $\mathbb{T}^n=\R^n/(2\pi \Z^n)$, the $n$-torus. Let $\{E_J(\mathbf{x})=e^{ i J\cdot\mathbf{x}}\;|\;J=(j_1,...,j_n)\in\Z^n,\;\mathbf{x}\in \mathbb{T}^n\}$ be an orthonormal basis for $L^2(\mathbb{T}^n)$. We define the Fourier coefficient of $\phi u$ by $c_{j_1...j_n}=\widehat{\phi u}(J)=\int_{\mathbb{T}^n} (\phi u)(\mathbf{x}) E_{-J} (\mathbf{x}) d\mathbf{x}$. 

Since the distribution derivatives of $\phi u$ of all orders in the coordinate directions are in $L^2$, its Fourier coefficients ${c_{j_1...j_n}}_{\{j_\ell\in \Z\}}$ are square summable with the weight  $|j_\ell|^k$, for any $k$. This implies that the Fourier coefficients decay faster than any polynomial. By the Sobolev embedding theorem, we have $\phi u\in C^{\infty}(\mathbb{T}^n)$. Furthermore, for every $k$, if $N>n/2+k$, then $||\phi u ||^2_{C^k}\leq \sum_{\{\alpha | |\alpha|\leq N\}}||D^{\alpha} \phi u||^2_{L^2}$ where $\alpha=(\alpha_1,...,\alpha_n)$ is an $n$-dimensional multi-index of non-negative integers, $|\alpha|=\alpha_1+...+\alpha_n$ and $D^{\alpha}=\frac{\partial^{|\alpha|}}{\partial_{x_1}^{\alpha_1}...\partial^{\alpha_n}_{x_n}}$. To prove the second part of the claim, it is enough to show that $\sum_{\{\alpha| |\alpha|\leq N\}}||D^{\alpha} \phi u||^2_{L^2}\leq C\sum_{m=0}^N ||\partial_{x_\ell}^m \phi u||_{L^2}^2$, for some constant $C=C(k,n)$. 
\begin{eqnarray*}
\label{fcoe}
\begin{split}
\sum_{\{\alpha| |\alpha|\leq N\}}||D^{\alpha} \phi u||^2_{L^2}=&\sum_{\{\alpha| |\alpha|\leq N\}}||\widehat{D^{\alpha} \phi u}||^2_{L^2}\\
=&\sum_{\{|\alpha||\alpha|\leq N\}}\sum_{j_\ell\in\Z} (j_1)^{2\alpha_1}...(j_n)^{2\alpha_n}|c_{j_1...j_n}|^2\\ 
\leq&\sum_{\{|\alpha| |\alpha|\leq N\}} \sum_{j_\ell\in\Z} \Big((j_1)^{2|\alpha|}+...+(j_n)^{2|\alpha|}\Big)|c_{j_1...j_n}|^2\\
=&\sum_{m=0}^N {m+n-1 \choose m} \sum_{j_\ell\in\Z} \Big((j_1)^{2m}+...+(j_n)^{2m}\Big)|c_{j_1...j_n}|^2\\
\leq&C\sum_{m=0}^N\sum_{i=1}^n||\widehat{\partial_{x_i}^{m} \phi u }||_{L^2}^2\\
\leq& C\sum_{m=0}^N\sum_{i=1}^n||\partial_{x_i}^{m} \phi u ||^2_{C^0}.
\end{split}
\end{eqnarray*}
\end{proof}

\bibliographystyle{amsplain}
\bibliography{RSreference}

\end{document}